\begin{document}


\newtheorem{thm}{Théorème}[section]
\newtheorem{theo}{Théorème}
\newtheorem{defn}[thm]{Définition}
\newtheorem{prop}[thm]{Proposition}
\newtheorem{cor}[thm]{Corollaire}
\newtheorem{con}{Conjecture}
\newtheorem*{rem}{Remarque}
\newtheorem*{rems}{Remarques}
\newtheorem{pro}{Problème}
\newtheorem*{ex}{Exemples}
\newtheorem*{exe}{Exemple}
\newtheorem{lem}[thm]{Lemme}


\title{Combinatoire des sous-groupes de congruence du groupe modulaire}

\author{Flavien Mabilat}

\date{}

\keywords{modular group, congruence subgroup, quiddity}

\address{
Flavien Mabilat,
Laboratoire de Mathématiques 
U.F.R. Sciences Exactes et Naturelles 
Moulin de la Housse - BP 1039 
51687 Reims cedex 2,
France}
\email{flavien.mabilat@univ-reims.fr}

\maketitle

\selectlanguage{english}
\begin{abstract}
In this paper, we study the combinatorics of congruence subgroups of the modular group by generalizing results obtained in the non-modular case. For this, we define a notion of irreducible solutions from which we can build all the solutions. In particular, we give a particular solution, irreducible for any $N$, and the list of irreducible solutions for $N \leq 6$.
\end{abstract}

\selectlanguage{french}
\begin{abstract}
Dans cet article, on étudie la combinatoire des sous-groupes de congruence du groupe modulaire en généralisant des résultats obtenus dans le cas non modulaire. On définit pour cela une notion de solutions irréductibles à partir desquelles on peut construire l'ensemble des solutions. En particulier, on donne une solution particulière, irréductible pour $N$ quelconque, et la description explicite des solutions irréductibles pour $N \leq 6$.
\\
\end{abstract}

\thispagestyle{empty}

{\bf Mots clés:} groupe modulaire, sous-groupe de congruence, quiddité
\\
\begin{flushright}
\og \textit{Puisque ces mystères nous dépassent, feignons d'en être l'organisateur.} \fg
\\ Jean Cocteau, \textit{Les Mariés de la tour Eiffel}
\end{flushright}

\section{Introduction}

La connaissance de parties génératrices à deux éléments du groupe modulaire 
$$SL_{2}(\mathbb{Z})=
\left\{
\begin{pmatrix}
a & b \\
c & d
   \end{pmatrix}
 \;\vert\;a,b,c,d \in \mathbb{Z},\;
 ad-bc=1
\right\}$$ est l'une des propriétés les plus remarquables de ce groupe. On peut notamment choisir les deux éléments suivants (voir par exemple \cite{A}): \[T=\begin{pmatrix}
 1 & 1 \\[2pt]
    0    & 1 
   \end{pmatrix}, S=\begin{pmatrix}
   0 & -1 \\[2pt]
    1    & 0 
   \end{pmatrix}.
 \] 
\\On déduit de ce choix que pour tout élément $A$ de $SL_{2}(\mathbb{Z})$ il existe un entier strictement positif $n$ et des entiers strictement positifs $a_{1},\ldots,a_{n}$ tels que \[A=T^{a_{n}}ST^{a_{n-1}}S\cdots T^{a_{1}}S=\begin{pmatrix}
   a_{n} & -1 \\[4pt]
    1    & 0 
   \end{pmatrix}
\begin{pmatrix}
   a_{n-1} & -1 \\[4pt]
    1    & 0 
   \end{pmatrix}
   \cdots
   \begin{pmatrix}
   a_{1} & -1 \\[4pt]
    1    & 0 
    \end{pmatrix}.\]
On utilisera la notation $M_{n}(a_{1},\ldots,a_{n})$ pour désigner la matrice $\begin{pmatrix}
   a_{n} & -1 \\[4pt]
    1    & 0 
   \end{pmatrix}
\begin{pmatrix}
   a_{n-1} & -1 \\[4pt]
    1    & 0 
   \end{pmatrix}
   \cdots
   \begin{pmatrix}
   a_{1} & -1 \\[4pt]
    1    & 0 
    \end{pmatrix}$. Remarquons que l'écriture d'un élément de $SL_{2}(\mathbb{Z})$ sous cette forme n'est pas unique (pour une façon d'assurer l'unicité d'une écriture de cette forme on peut consulter \cite{MO}).
\\
\\L'écriture des éléments du groupe modulaire sous cette forme et l'absence d'unicité incite, d'une part, à  essayer de trouver toutes les écritures d'une matrice sous la forme $M_{n}(c_{1},\ldots,c_{n})$ et, d'autre part, à chercher des descriptions combinatoires des solutions en utilisant l'idée générale que des entiers strictement positifs comptent des objets (notamment géométriques). V.Ovsienko (voir \cite{O}) a donné  notamment les solutions (et une description combinatoire de celles-ci en terme de découpages de polygones) de l'équation suivante: \begin{equation}
M_{n}(a_1,\ldots,a_n)=\pm Id.
\end{equation} En particulier, ce résultat généralise un théorème antérieur dû à Conway et Coxeter (voir \cite{CoCo,Cox,Hen}). On dispose également de résultats analogues sur l'équation suivante (voir \cite{Ma}): \begin{equation}
M_{n}(a_1,\ldots,a_n)=\pm S.
\end{equation}
Une autre façon d'exploiter l'écriture des éléments du groupe modulaire sous la forme $M_{n}(a_{1},\ldots,a_{n})$ est de chercher toutes les écritures des éléments d'un sous-groupe donné. Notre objectif ici est de mener à bien cette démarche dans le cas des sous-groupes de congruence suivants: \[\hat{\Gamma}(N)=\{A \in SL_{2}(\mathbb{Z})~{\rm tel~que}~A= \pm Id~( {\rm mod}~N)\}.\] Ce problème est équivalent à la résolution de l'équation suivante dans $\mathbb{Z}/N \mathbb{Z}$ :
\begin{equation}
\tag{$E_{N}$}
M_{n}(a_1,\ldots,a_n)=\pm Id.
\end{equation} Notons que l'équation $(E_{N})$ apparaît naturellement dans la théorie des frises de Coxeter (voir \cite{Mo1,Mo2}). Les solutions de $(E_{N})$ étant invariantes par permutations circulaires on considère les solutions $(a_{1},\ldots,a_{n})$ comme des séquences infinies $n$-périodiques. On dispose déjà des solutions dans le cas où $N=2$ (voir \cite{M} et la section 4). Pour mener à bien la résolution de cette équation, on définit une notion de solution irréductible à partir de laquelle on pourra construire l'ensemble des solutions (voir section suivante). On s'intéressera en particulier à la résolution de $(E_{N})$ pour les petites valeurs de $N$ (voir section 4) et à la recherche de solutions irréductibles dans le cas général (voir section 3).

\section{Résultats principaux}\label{PDSec}

L'objectif de cette section est de définir la notion d'irréductibilité évoquée dans la section précédente et d'énoncer les résultats principaux de ce texte. Cette notion d'irréductibilité repose sur la notion de somme  introduite dans \cite{C} (voir aussi \cite{WZ}). Sauf mention contraire, $N$ désigne un entier naturel supérieur à $2$ et si $a \in \mathbb{Z}$ on note $\overline{a}:=a+N\mathbb{Z}$.

\begin{defn}[\cite{WZ}, définition 1.8]

Soient $(\overline{a_{1}},\ldots,\overline{a_{n}}) \in (\mathbb{Z}/N \mathbb{Z})^{n}$ et $(\overline{b_{1}},\ldots,\overline{b_{m}}) \in (\mathbb{Z}/N \mathbb{Z})^{m}$. On définit l'opération suivante: \[(\overline{a_{1}},\ldots,\overline{a_{n}}) \oplus (\overline{b_{1}},\ldots,\overline{b_{m}}):= (\overline{a_{1}+b_{m}},\overline{a_{2}},\ldots,\overline{a_{n-1}},\overline{a_{n}+b_{1}},\overline{b_{2}},\ldots,\overline{b_{m-1}}).\] Le $(n+m-2)$-uplet obtenu est appelé la somme de $(\overline{a_{1}},\ldots,\overline{a_{n}})$ avec $(\overline{b_{1}},\ldots,\overline{b_{m}})$.

\end{defn}

\begin{ex}

{\rm Voici quelques exemples de somme:
\begin{itemize}
\item $(\overline{1},\overline{2},\overline{1}) \oplus (\overline{2},\overline{0},\overline{1},\overline{2})= (\overline{3},\overline{2},\overline{3},\overline{0},\overline{1})$,
\item $(\overline{3},\overline{2},\overline{1},\overline{1}) \oplus (\overline{1},\overline{0},\overline{1}) = (\overline{4},\overline{2},\overline{1},\overline{2},\overline{0})$,
\item $n \geq 2$, $(\overline{a_{1}},\ldots,\overline{a_{n}}) \oplus (\overline{0},\overline{0}) = (\overline{0},\overline{0}) \oplus (\overline{a_{1}},\ldots,\overline{a_{n}})=(\overline{a_{1}},\ldots,\overline{a_{n}})$.
\end{itemize}
}
\end{ex}

\begin{rem}

{\rm L'opération ci-dessus n'est pas commutative. En effet, on a dans \cite{WZ} l'exemple suivant:}
\[(\overline{1},\overline{1},\overline{1}) \oplus (\overline{2},\overline{1},\overline{2},\overline{1})=(\overline{2},\overline{1},\overline{3},\overline{1},\overline{2}) \neq (\overline{3},\overline{1},\overline{2},\overline{2},\overline{1}) = (\overline{2},\overline{1},\overline{2},\overline{1}) \oplus (\overline{1},\overline{1},\overline{1}),~(N \neq 1).\]

\end{rem}

On montre en particulier que la somme de deux solutions est encore une solution (voir \cite{C} lemme 2.7 et section suivante). Avant de définir la notion de solution irréductible on a encore besoin de la définition suivante:

\begin{defn}[\cite{WZ}, définition 1.5]

Soient $(\overline{a_{1}},\ldots,\overline{a_{n}}) \in (\mathbb{Z}/N \mathbb{Z})^{n}$ et $(\overline{b_{1}},\ldots,\overline{b_{n}}) \in (\mathbb{Z}/N \mathbb{Z})^{n}$. On dit que $(\overline{a_{1}},\ldots,\overline{a_{n}}) \sim (\overline{b_{1}},\ldots,\overline{b_{n}})$ si $(\overline{b_{1}},\ldots,\overline{b_{n}})$ est obtenu par permutation circulaire de $(\overline{a_{1}},\ldots,\overline{a_{n}})$ ou de $(\overline{a_{n}},\ldots,\overline{a_{1}})$.
\end{defn}

On peut montrer (\cite{WZ}, lemme 1.7) que $\sim$ est une relation d'équivalence. En particulier, si on a $(\overline{a_{1}},\ldots,\overline{a_{n}}) \sim (\overline{b_{1}},\ldots,\overline{b_{n}})$ alors $(\overline{a_{1}},\ldots,\overline{a_{n}})$ est solution de $(E_{N})$ si et seulement si $(\overline{b_{1}},\ldots,\overline{b_{n}})$ est solution de $(E_{N})$ (voir \cite{C} proposition 2.6 et également la section 3). On peut maintenant définir la notion d'irréductibilité annoncée dans l'introduction.

\begin{defn}[\cite{C}, définition 2.9]

Une solution $(\overline{c_{1}},\ldots,\overline{c_{n}})$ avec $n \geq 3$ de $(E_{N})$ est dite réductible s'il existe deux solutions de $(E_{N})$ $(\overline{a_{1}},\ldots,\overline{a_{m}})$ et $(\overline{b_{1}},\ldots,\overline{b_{l}})$ telles que \begin{itemize}
\item $(\overline{c_{1}},\ldots,\overline{c_{n}}) \sim (\overline{a_{1}},\ldots,\overline{a_{m}}) \oplus (\overline{b_{1}},\ldots,\overline{b_{l}})$,
\item $m \geq 3$ et $l \geq 3$.
\end{itemize}
Une solution est dite irréductible si elle n'est pas réductible.
\end{defn}

\begin{rem} 

{\rm $(\overline{0},\overline{0})$ n'est pas considérée comme étant une solution irréductible de $(E_{N})$.}

\end{rem}

\noindent Cette notion d'irréductibilité est celle que l'on va utiliser  pour résoudre $(E_{N})$. 
\\
\\Pour $N=1$, l'équation $(E_{N})$ n'a pas d'intérêt et, pour $N=0$, l'équation se ramène à la résolution dans $\mathbb{Z}$ de l'équation $M_{n}(a_{1},\ldots,a_{n})=\pm Id$. On dispose dans ce cas du résultat suivant:

\begin{thm}[Cuntz, Holm; \cite{C}, Théorème 3.2]

L'ensemble des solutions irréductibles de $(E_{0})$ est \[\{(1,1,1),(-1,-1,-1),(a,0,-a,0),(0,-a,0,a), a \in \mathbb{Z}-\{\pm 1\}\}. \]

\end{thm}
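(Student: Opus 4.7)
La démarche naturelle comporte trois étapes. Premièrement, la vérification directe, via la formule donnant les entrées de $M_{n}$ en termes de continuants d'Euler, que les triplets $(\pm 1, \pm 1, \pm 1)$ et les quadruplets $(a,0,-a,0)$, $(0,-a,0,a)$ sont des solutions de $(E_{0})$. Deuxièmement, leur irréductibilité : puisque toute somme de deux solutions de longueurs au moins $3$ est de longueur au moins $4$, les triplets sont trivialement irréductibles ; pour les quadruplets avec $a \neq \pm 1$, il suffit d'énumérer les sommes possibles $(\alpha,\alpha,\alpha) \oplus (\beta,\beta,\beta)$ avec $\alpha, \beta \in \{\pm 1\}$ (qui, on le verra, sont les seuls triplets solutions) et de constater qu'elles ne produisent que les quadruplets correspondant à $a = \pm 1$, cas précisément exclu.

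La troisième étape, classification complète, se scinde elle-même en deux. D'une part, les cas $n = 3$ et $n = 4$ : en écrivant $M_{n}(a_{1},\ldots,a_{n}) = \pm Id$ entrée par entrée, on obtient un petit système polynomial qui, pour $n = 3$, force immédiatement $a_{1} = a_{2} = a_{3} = \pm 1$, et pour $n = 4$, par discussion sur la valeur du coefficient central $K_{2}(a_{2},a_{3})$, conduit (à la relation $\sim$ près) aux familles $(a,0,-a,0)$ et $(0,-a,0,a)$ annoncées, les autres solutions éventuelles étant visiblement réductibles comme $(\varepsilon,\varepsilon,\varepsilon) \oplus (\varepsilon,\varepsilon,\varepsilon)$ avec $\varepsilon \in \{\pm 1\}$. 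D'autre part, il faut établir que toute solution de longueur $n \geq 5$ est réductible, ce que je traiterais par récurrence sur $n$ : on cherche dans toute solution $(c_{1},\ldots,c_{n})$ un indice $i$ et un choix de valeurs rendant $(c_{1},\ldots,c_{n}) \sim (a_{1},\ldots,a_{m}) \oplus (b_{1},\ldots,b_{l})$ avec $m, l \geq 3$.

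Cette dernière étape constitue l'obstacle principal. À la différence du cadre Conway--Coxeter (entrées strictement positives, existence garantie d'une entrée égale à $1$ matérialisant une oreille d'une triangulation), les entrées négatives ou nulles autorisées ici privent la situation de toute interprétation géométrique immédiate. Une approche plausible consiste à exploiter les relations $K_{n}(c_{1},\ldots,c_{n}) = \pm 1$ et $K_{n-1}(c_{1},\ldots,c_{n-1}) = K_{n-1}(c_{2},\ldots,c_{n}) = 0$ entre continuants pour forcer, quitte à appliquer $\sim$, l'apparition d'un motif local détachable — typiquement une entrée nulle entourée d'entrées opposées — que l'on reconnaît alors comme un bloc du type $(a,0,-a,0)$ ou $(1,1,1)$, puis à conclure par l'hypothèse de récurrence.
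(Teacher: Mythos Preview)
Ce théorème n'est pas démontré dans l'article : il y est seulement cité, avec renvoi explicite à \cite{C}, Théorème~3.2. Il n'y a donc pas de \og preuve du papier \fg{} à laquelle comparer votre proposition.

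Sur le fond de votre esquisse, les deux premières étapes sont correctes et standard ; elles correspondent d'ailleurs aux Propositions~3.2--3.4 et~3.8 du présent article, transposées au cas $N=0$. En revanche, la troisième étape --- montrer que toute solution de longueur $n \geq 5$ est réductible --- n'est pas une preuve mais une déclaration d'intention. Vous écrivez vous-même qu'il s'agit d'\og une approche plausible \fg{} consistant à \og forcer [\ldots] l'apparition d'un motif local détachable \fg{}, sans expliquer comment les relations sur les continuants $K_{n}$ et $K_{n-1}$ garantissent effectivement l'existence d'un tel motif. Or c'est précisément là toute la difficulté : sur $\mathbb{Z}$, rien n'interdit a priori une solution sans entrée dans $\{-1,0,1\}$, et l'argument de type \og oreille de triangulation \fg{} que vous évoquez ne s'applique pas directement. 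La preuve originale de Cuntz repose sur une analyse fine qui n'est pas reproduite ici et que votre esquisse ne reconstitue pas. Tant que cette étape reste à l'état d'heuristique, votre proposition ne constitue pas une démonstration.
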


\noindent On dispose également d'une description combinatoire de ces solutions (voir \cite{CH} Théorème 7.3).
\\
\\On s'intéresse donc dans cette article aux cas $N \geq 2$. On va établir les résultats suivants:

\begin{thm}

i)Les solutions irréductibles de $(E_{2})$ sont $(\overline{1},\overline{1},\overline{1})$ et $(\overline{0},\overline{0},\overline{0},\overline{0})$.
\\
\\ii)Les solutions irréductibles de $(E_{3})$ sont $(\overline{1},\overline{1},\overline{1})$, $(\overline{-1},\overline{-1},\overline{-1})$ et $(\overline{0},\overline{0},\overline{0},\overline{0})$.
\\
\\iii)Les solutions irréductibles de $(E_{4})$ sont $(\overline{1},\overline{1},\overline{1})$, $(\overline{-1},\overline{-1},\overline{-1})$, $(\overline{0},\overline{0},\overline{0},\overline{0})$, $(\overline{0},\overline{2},\overline{0},\overline{2})$ ,$(\overline{2},\overline{0},\overline{2},\overline{0})$ et $(\overline{2},\overline{2},\overline{2},\overline{2})$.
\\
\\iv)Les solutions irréductibles de $(E_{5})$ sont (à permutations cycliques près) $(\overline{1},\overline{1},\overline{1})$, $(\overline{-1},\overline{-1},\overline{-1})$, $(\overline{0},\overline{0},\overline{0},\overline{0})$, $(\overline{0},\overline{2},\overline{0},\overline{3})$, $(\overline{2},\overline{2},\overline{2},\overline{2},\overline{2})$, $(\overline{3},\overline{3},\overline{3},\overline{3},\overline{3})$, $(\overline{3},\overline{2},\overline{2},\overline{3},\overline{2},\overline{2})$, $(\overline{2},\overline{3},\overline{3},\overline{2},\overline{3},\overline{3})$, $(\overline{2},\overline{3},\overline{2},\overline{3},\overline{2},\overline{3})$.
\\
\\v)Les solutions irréductibles de $(E_{6})$ sont (à permutations cycliques près) $(\overline{1},\overline{1},\overline{1})$, $(\overline{-1},\overline{-1},\overline{-1})$, $(\overline{0},\overline{0},\overline{0},\overline{0})$, $(\overline{2},\overline{4},\overline{2},\overline{4})$, $(\overline{2},\overline{3},\overline{4},\overline{3})$, $(\overline{0},\overline{2},\overline{0},\overline{4})$, $(\overline{0},\overline{3},\overline{0},\overline{3})$, $(\overline{2},\overline{2},\overline{2},\overline{2},\overline{2},\overline{2})$, $(\overline{3},\overline{3},\overline{3},\overline{3},\overline{3},\overline{3})$, $(\overline{4},\overline{4},\overline{4},\overline{4},\overline{4},\overline{4})$.

\end{thm}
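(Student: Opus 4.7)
The plan is to treat items $i)$--$v)$ uniformly in two phases: first verify that every tuple listed is an irreducible solution of $(E_N)$, then show that no other irreducible solution exists.

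For the verification phase, for each listed tuple $(\overline c_1,\ldots,\overline c_n)$ I would compute $M_n(\overline c_1,\ldots,\overline c_n)$ directly as a product of at most six $2\times 2$ matrices modulo $N\leq 6$ and check that the result is $\pm Id$. Irreducibility is then a finite check: a decomposition $(\overline c_1,\ldots,\overline c_n)\sim (\overline a_1,\ldots,\overline a_m)\oplus(\overline b_1,\ldots,\overline b_l)$ with $m,l\geq 3$ forces $m+l-2=n$, so only a handful of pairs $(m,l)$ arise, and for each pair, up to the equivalence $\sim$, there are only finitely many ways to split the entries as an $\oplus$-sum; one checks that none of these splittings yields two sub-tuples that both solve $(E_N)$.

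The core of the argument is the exhaustivity phase. Since the alphabet $\mathbb{Z}/N\mathbb{Z}$ is finite, it suffices to bound the length $n$ of any irreducible solution. My plan is to establish, case by case for $N\in\{2,3,4,5,6\}$, a length bound $n_N$ (with $n_N\leq 6$ in all five cases, judging by the final answer) by showing that every sufficiently long solution contains a local pattern forcing a $\oplus$-decomposition into two solutions of length at least $3$. Following \cite{C,WZ}, the patterns to scrutinise are the occurrence of an entry $\overline 0$, of adjacent entries whose sum is a prescribed value, or, for composite $N$, of entries sharing a nontrivial factor with $N$. Once $n\leq n_N$ is secured, the classification reduces to a finite enumeration: for each $3\leq n\leq n_N$ list the representatives of $(\mathbb{Z}/N\mathbb{Z})^n/\!\sim$ that solve $(E_N)$, then discard the reducible ones using the decomposition test from the verification phase.

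The main obstacle will be establishing these length bounds with sufficient sharpness. The qualitative statement \emph{sufficiently long solutions are reducible} should follow from the general techniques developed in section 3, but refining it to $n_N\leq 6$ for each $N\leq 6$ requires a careful inventory of the admissible local configurations that survive when the obvious decompositions are ruled out; for $N=5$ and $N=6$, where the alphabet is larger and length-$6$ irreducibles actually appear, this inventory is the delicate point. Once it is carried out, matching the enumerated list with the tuples in items $i)$--$v)$ completes the proof.
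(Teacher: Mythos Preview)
Your two-phase plan (verify the listed tuples, then bound the length and enumerate) is exactly the paper's strategy, and the structure of your argument matches Section~4 closely. One point is underemphasised in your sketch, however: the single most useful local pattern is not the presence of a $\overline 0$ but the presence of a $\overline{\pm 1}$. Proposition~3.8 shows that for $n\geq 4$ any solution containing $\overline{1}$ or $\overline{-1}$ is reducible (via $\oplus(\overline\epsilon,\overline\epsilon,\overline\epsilon)$), and for $n\geq 5$ any solution containing $\overline{0}$ is reducible (via $\oplus(\overline{-a},\overline 0,\overline a,\overline 0)$). This immediately settles $N=2,3$ by pigeonhole, and for $N=4,5,6$ it reduces the problem to solutions whose entries all lie in $\{\overline 2,\ldots,\overline{N-2}\}$; your vaguer ``adjacent entries whose sum is a prescribed value'' and ``entries sharing a factor with $N$'' are not the mechanisms actually used.

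From there, the paper's case analysis on the reduced alphabet is more concrete than you anticipate. For $N=4$ the only surviving entry is $\overline 2$, and one peels off $(\overline 2,\overline 2,\overline 2,\overline 2)$. For $N=5$ one looks for three consecutive $\overline 2$'s or $\overline 3$'s, then for a block $\overline 3\,\overline 2\,\overline 2\,\overline 3$ or $\overline 2\,\overline 3\,\overline 3\,\overline 2$, and finally the alternating case; each forces a specific $\oplus$-summand from the list. For $N=6$ one checks whether a $\overline 2$ is adjacent to a $\overline 3$ or $\overline 4$ (peeling off one of the listed $4$-tuples), and similarly for $\overline 3$ adjacent to $\overline 4$, until only the constant sequences remain. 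So the ``delicate inventory'' you flag is not a search over an unstructured space but a short decision tree driven by which element of $\{\overline 2,\ldots,\overline{N-2}\}$ appears and what its neighbours are.
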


Ces résultats sont démontrés dans la section 4. On donne également en section 4 les solutions irréductibles de $(E_{7})$ obtenues avec assistance informatique.
\\
\\On montre également dans la section 3 le résultat suivant:

\begin{thm}

Si $N \geq 3$, $(\overline{2},\ldots,\overline{2}) \in (\mathbb{Z}/N\mathbb{Z})^{N}$ est une solution irréductible de $(E_{N})$.

\end{thm}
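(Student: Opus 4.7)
L'idée est de procéder en deux étapes: une vérification rapide que la suite est solution, puis un argument d'irréductibilité par l'absurde. Pour la première étape, je montrerais par récurrence sur $k$ que, dans $SL_2(\mathbb{Z})$, on a
\[
M_k(2,2,\ldots,2) \;=\; \begin{pmatrix} k+1 & -k \\ k & -(k-1) \end{pmatrix}.
\]
La spécialisation à $k=N$ donne une matrice congrue à $\mathrm{Id}$ modulo $N$, ce qui montre que $(\overline{2},\ldots,\overline{2}) \in (\mathbb{Z}/N\mathbb{Z})^N$ est bien solution de $(E_N)$.

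Pour l'irréductibilité, je raisonnerais par l'absurde en supposant que
\[
(\overline{2},\ldots,\overline{2}) \;\sim\; (\overline{a_1},\ldots,\overline{a_m}) \oplus (\overline{b_1},\ldots,\overline{b_l})
\]
avec deux solutions de $(E_N)$ de longueurs $m,l \geq 3$. La longueur de la somme étant $m+l-2 = N$, on a $m+l = N+2$, et donc $3 \leq m \leq N-1$. Comme toute permutation circulaire ou tout retournement de la suite constante $(\overline{2},\ldots,\overline{2})$ demeure cette même suite constante, la formule explicite de $\oplus$ impose $\overline{a_j} = \overline{2}$ pour $2 \leq j \leq m-1$, $\overline{b_j} = \overline{2}$ pour $2 \leq j \leq l-1$, ainsi que $\overline{a_1+b_l} = \overline{a_m+b_1} = \overline{2}$. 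Chaque facteur a donc la forme très rigide $(\overline{a_1},\overline{2},\ldots,\overline{2},\overline{a_m})$.

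Il reste à exploiter le fait que $(\overline{a_1},\overline{2},\ldots,\overline{2},\overline{a_m})$ est solution de $(E_N)$. En écrivant
\[
M_m(a_1,2,\ldots,2,a_m) \;=\; \begin{pmatrix} a_m & -1 \\ 1 & 0 \end{pmatrix}\, M_{m-2}(2,\ldots,2)\, \begin{pmatrix} a_1 & -1 \\ 1 & 0 \end{pmatrix}
\]
et en utilisant la formule explicite obtenue à l'étape initiale, un calcul direct donne que le coefficient $(2,2)$ de ce produit vaut $-(m-1)$, indépendamment de $a_1$ et $a_m$. L'équation $(E_N)$ impose alors $-(m-1) \equiv \pm 1 \pmod{N}$, c'est-à-dire $m \equiv 0$ ou $m \equiv 2 \pmod{N}$; or $3 \leq m \leq N-1$ exclut ces deux congruences, d'où la contradiction. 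Le seul point véritablement délicat est l'extraction du coefficient $(2,2)$ sous une forme indépendante de $a_1$ et $a_m$: une fois cette relation scalaire obtenue, la conclusion découle uniquement des inégalités satisfaites par $m$.
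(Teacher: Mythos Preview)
Your proof is correct and follows essentially the same route as the paper: establish $M_k(2,\ldots,2)=\begin{pmatrix} k+1 & -k\\ k & -(k-1)\end{pmatrix}$ by induction, then argue by contradiction that a putative factor must have the shape $(\overline{a_1},\overline{2},\ldots,\overline{2},\overline{a_m})$ and that being a solution forces $m\equiv 0$ or $2\pmod N$, contradicting $3\le m\le N-1$. Your direct extraction of the $(2,2)$ coefficient is a slight streamlining of the paper's Lemma~3.21 (which computes the full matrix and also pins down $a_1=a_m\in\{\overline{0},\overline{2}\}$), but the key mechanism is identical.
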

									
\section{Résultats généraux sur l'équation $(E_{N})$}

Dans cette partie, $N$ est un entier naturel supérieur ou égal à 2. On dira qu'une solution de $(E_{N})$ est de taille $n$ si cette solution est un $n$-uplet d'éléments de $\mathbb{Z}/N\mathbb{Z}$ solution de $(E_{N})$.

\subsection{Solutions de $(E_{N})$ pour les petites valeurs de $n$}

\leavevmode\par
\leavevmode\par \noindent On va essayer dans cette sous-partie de rechercher les solutions de $M_{n}(\overline{a_{1}},\ldots,\overline{a_{n}})=\pm Id$ pour les petites valeurs de $n$. On voit facilement que $(E_{N})$ n'a pas de solution pour $n=1$. On va maintenant résoudre $(E_{N})$ pour $n=2$ et $n=3$.

\begin{prop}

$(\overline{0},\overline{0})$ est la seule solution de $(E_{N})$ de taille 2.

\end{prop}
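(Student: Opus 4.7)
Le plan est direct : je commencerais par expliciter le produit matriciel $M_{2}(\overline{a_{1}},\overline{a_{2}})$, puis j'identifierais coefficient par coefficient avec $Id$ puis avec $-Id$. Le calcul donne
\[M_{2}(\overline{a_{1}},\overline{a_{2}}) = \begin{pmatrix} \overline{a_{2}} & -\overline{1} \\ \overline{1} & \overline{0} \end{pmatrix}\begin{pmatrix} \overline{a_{1}} & -\overline{1} \\ \overline{1} & \overline{0} \end{pmatrix} = \begin{pmatrix} \overline{a_{1}a_{2}-1} & -\overline{a_{2}} \\ \overline{a_{1}} & -\overline{1} \end{pmatrix}.\]

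Je traiterais ensuite les deux cas séparément. Dans le cas $M_{2}(\overline{a_{1}},\overline{a_{2}}) = -Id$, les coefficients hors diagonale imposent directement $\overline{a_{1}} = \overline{0}$ et $\overline{a_{2}} = \overline{0}$, et il suffit alors de constater que $M_{2}(\overline{0},\overline{0}) = -Id$ pour conclure que $(\overline{0},\overline{0})$ est bien une solution. Dans le cas $M_{2}(\overline{a_{1}},\overline{a_{2}}) = Id$, le coefficient $(2,2)$ impose $-\overline{1} = \overline{1}$, c'est-à-dire $N \mid 2$, ce qui force $N=2$ ; mais alors $Id = -Id$ dans $\mathbb{Z}/2\mathbb{Z}$ et on retombe sur la solution déjà obtenue.

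Il n'y a pas vraiment d'obstacle à craindre ici, la preuve se ramenant à un calcul matriciel $2\times 2$ et à une discussion élémentaire. Le seul point à ne pas négliger est la possibilité $M_{2} = Id$, qui n'a de sens que pour $N = 2$ mais qui, dans ce cas, ne produit aucune solution supplémentaire par rapport au cas $-Id$.
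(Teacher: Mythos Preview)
Your proof is correct and follows essentially the same approach as the paper: compute $M_{2}(\overline{a_{1}},\overline{a_{2}})$ explicitly and read off the constraints on $\overline{a_{1}},\overline{a_{2}}$ from the entries. The only minor difference is that the paper handles both signs at once by looking at the off-diagonal entries (which vanish for $Id$ and $-Id$ alike), whereas you split into the cases $\pm Id$ and analyze the $(2,2)$ entry in the $+Id$ case; both routes lead to the same conclusion.
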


\begin{proof}

$\begin{pmatrix}
    \overline{a_{2}} & \overline{-1} \\
    \overline{1}    & \overline{0} 
    \end{pmatrix} \begin{pmatrix}
    \overline{a_{1}} & \overline{-1} \\
    \overline{1}    & \overline{0} 
    \end{pmatrix}=\begin{pmatrix}
    \overline{a_{2}a_{1}-1} & \overline{-a_{2}} \\
    \overline{a_{1}}    & \overline{-1} 
    \end{pmatrix}$.

\noindent Si $(\overline{a_{1}},\overline{a_{2}})$ est solution de $(E_{N})$ alors $\overline{a_{1}}=\overline{a_{2}}=\overline{0}$ et $(\overline{0},\overline{0})$ est solution de $(E_{N})$.

\end{proof}

\begin{prop}

$(\overline{1},\overline{1},\overline{1})$ et $(\overline{-1},\overline{-1},\overline{-1})$ sont les seules solutions de $(E_{N})$ de taille 3.

\end{prop}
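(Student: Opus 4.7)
Le plan est de procéder par calcul direct, en prolongeant celui de la proposition précédente. On commence par expliciter la matrice $M_{3}(\overline{a_{1}},\overline{a_{2}},\overline{a_{3}})$: en multipliant à gauche le produit calculé dans la preuve de la proposition 3.3 par le facteur supplémentaire $\begin{pmatrix} \overline{a_{3}} & \overline{-1} \\ \overline{1} & \overline{0} \end{pmatrix}$, on obtient
\[M_{3}(\overline{a_{1}},\overline{a_{2}},\overline{a_{3}}) = \begin{pmatrix} \overline{a_{1}a_{2}a_{3}-a_{1}-a_{3}} & \overline{1-a_{2}a_{3}} \\ \overline{a_{1}a_{2}-1} & \overline{-a_{2}} \end{pmatrix}.\]

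L'étape centrale consiste à exploiter les coefficients hors diagonale. Supposons que $(\overline{a_{1}},\overline{a_{2}},\overline{a_{3}})$ soit solution de $(E_{N})$. L'examen du coefficient $(2,2)$ impose $\overline{a_{2}} = \mp\overline{1}$; en particulier $\overline{a_{2}}$ est inversible dans $\mathbb{Z}/N\mathbb{Z}$. Les coefficients $(2,1)$ et $(1,2)$ fournissent alors les relations $\overline{a_{1}}\,\overline{a_{2}} = \overline{1}$ et $\overline{a_{2}}\,\overline{a_{3}} = \overline{1}$, qui forcent $\overline{a_{1}} = \overline{a_{3}} = \overline{a_{2}}^{-1} = \overline{a_{2}}$. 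On se ramène donc aux deux seuls triplets candidats $(\overline{1},\overline{1},\overline{1})$ et $(\overline{-1},\overline{-1},\overline{-1})$.

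Il reste à vérifier que ces deux candidats sont effectivement solutions, c'est-à-dire que le coefficient diagonal supérieur $\overline{a_{1}a_{2}a_{3}-a_{1}-a_{3}}$ prend la valeur compatible attendue. Un calcul immédiat donne $M_{3}(\overline{1},\overline{1},\overline{1}) = -Id$ et $M_{3}(\overline{-1},\overline{-1},\overline{-1}) = Id$, ce qui conclut. Il n'y a pas d'obstacle véritable dans cette preuve, qui se ramène à un exercice élémentaire d'algèbre linéaire modulo $N$; la seule vigilance à exercer concerne la gestion simultanée des deux signes $\pm$ dans l'équation $M_{3} = \pm Id$.
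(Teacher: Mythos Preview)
Your proof is correct and follows essentially the same approach as the paper's: compute $M_{3}(\overline{a_{1}},\overline{a_{2}},\overline{a_{3}})$ explicitly, read off the constraints from the $(2,2)$ and off-diagonal entries, then verify the two candidates. The only difference is cosmetic---you invoke the invertibility of $\overline{a_{2}}=\pm\overline{1}$ to treat both signs simultaneously, whereas the paper splits explicitly into the cases $\overline{a_{2}}=\overline{1}$ and $\overline{a_{2}}=\overline{-1}$; note also that your reference should be to the size-$2$ proposition (numbered~3.1 in the paper), not~3.3.
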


\begin{proof}

\begin{eqnarray*}
\begin{pmatrix}
    \overline{a_{3}} & \overline{-1} \\
    \overline{1}    & \overline{0} 
    \end{pmatrix} \begin{pmatrix}
    \overline{a_{2}} & \overline{-1} \\
    \overline{1}    & \overline{0} 
    \end{pmatrix} \begin{pmatrix}
    \overline{a_{1}} & \overline{-1} \\
    \overline{1}    & \overline{0} 
    \end{pmatrix} &=& \begin{pmatrix}
    \overline{a_{3}a_{2}-1} & \overline{-a_{3}} \\
    \overline{a_{2}}    & \overline{-1} 
    \end{pmatrix}\begin{pmatrix}
    \overline{a_{1}} & \overline{-1} \\
    \overline{1}    & \overline{0} 
    \end{pmatrix} \\
		              &=& \begin{pmatrix}
    \overline{a_{3}a_{2}a_{1}-a_{3}-a_{1}} & \overline{-a_{3}a_{2}+1} \\
    \overline{a_{2}a_{1}-1}    & \overline{-a_{2}} 
    \end{pmatrix}. \\
\end{eqnarray*}

 Si $(\overline{a_{1}},\overline{a_{2}},\overline{a_{3}})$ est solution de $(E_{N})$ alors soit $\overline{a_{2}}=\overline{1}$ et dans ce cas $\overline{a_{1}}=\overline{1}$ et $\overline{a_{3}}=\overline{1}$, soit $\overline{a_{2}}=\overline{-1}$ et dans ce cas $\overline{a_{1}}=\overline{-1}$ et $\overline{a_{3}}=\overline{-1}$. On vérifie que $(\overline{1},\overline{1},\overline{1})$ et $(\overline{-1},\overline{-1},\overline{-1})$ sont solutions de $(E_{N})$.

\end{proof}

\noindent Pour $n=4$, on dispose du résultat suivant:

\begin{prop}

Les solutions de $(E_{N})$ pour $n=4$ sont les 4-uplets suivants $(\overline{-a},\overline{b},\overline{a},\overline{-b})$ avec $\overline{a}\overline{b}=\overline{0}$ et $(\overline{a},\overline{b},\overline{a},\overline{b})$ avec $\overline{a}\overline{b}=\overline{2}$.

\end{prop}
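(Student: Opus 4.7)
Le plan est de procéder par calcul direct, dans le même esprit que les deux propositions précédentes. Je commencerais par calculer $M_{4}(\overline{a_{1}},\overline{a_{2}},\overline{a_{3}},\overline{a_{4}})$ en multipliant à gauche la matrice $M_{3}(\overline{a_{1}},\overline{a_{2}},\overline{a_{3}})$ déjà obtenue par $\begin{pmatrix}\overline{a_{4}} & \overline{-1} \\ \overline{1} & \overline{0}\end{pmatrix}$. La ligne inférieure du produit étant simplement la ligne supérieure de $M_{3}$, la moitié des coefficients s'obtient sans nouveau calcul ; seuls les coefficients $(1,1)$ et $(1,2)$ demandent un véritable travail.

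Ensuite, je traiterais séparément les cas $M_{4}=\overline{Id}$ et $M_{4}=-\overline{Id}$. Dans chacun, le coefficient $(2,2)$ donne immédiatement la valeur du produit $\overline{a_{3}a_{2}}$ : $\overline{0}$ dans le premier cas, $\overline{2}$ dans le second. En reportant cette information dans le coefficient $(2,1)$, à savoir $\overline{a_{3}a_{2}a_{1}-a_{3}-a_{1}}=\overline{0}$, j'obtiendrais respectivement $\overline{a_{3}}=-\overline{a_{1}}$ et $\overline{a_{3}}=\overline{a_{1}}$. De façon analogue, le coefficient $(1,2)$ se simplifie en $\overline{a_{4}}=-\overline{a_{2}}$ et $\overline{a_{4}}=\overline{a_{2}}$. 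Enfin, compte tenu de ces substitutions, le coefficient $(1,1)$ se réduit à $\overline{a_{1}a_{2}}=\overline{0}$ dans le cas $\overline{Id}$ et à $\overline{a_{1}a_{2}}=\overline{2}$ dans le cas $-\overline{Id}$, ce qui est cohérent avec la valeur précédemment imposée à $\overline{a_{3}a_{2}}$ via les égalités $\overline{a_{3}}=\pm\overline{a_{1}}$.

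En posant $\overline{a_{1}}=-\overline{a}$, $\overline{a_{2}}=\overline{b}$ dans le premier cas et $\overline{a_{1}}=\overline{a}$, $\overline{a_{2}}=\overline{b}$ dans le second, je retrouverais exactement les deux familles annoncées. Pour conclure, il restera à vérifier la réciproque : un $4$-uplet de l'une de ces deux formes est bien solution de $(E_{N})$. Cette vérification se ramène à un calcul matriciel immédiat (en particulier, dans le cas $-\overline{Id}$, on retrouve $(ab)^{2}-3ab+1=-1$ dès que $ab=2$). Je n'anticipe pas de véritable obstacle technique ; le seul point de vigilance est de ne pas confondre les deux cas $\pm\overline{Id}$ et de s'assurer que la condition issue du coefficient $(1,1)$ reste compatible avec celle issue du coefficient $(2,2)$, ce qui n'est vrai que parce que l'on a déjà imposé $\overline{a_{3}}=\pm\overline{a_{1}}$.
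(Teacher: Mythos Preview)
Votre plan est correct et suit exactement la même démarche que l'article : calcul explicite de $M_{4}$, lecture du coefficient $(2,2)$ pour distinguer les deux cas $\overline{a_{3}a_{2}}=\overline{0}$ et $\overline{a_{3}a_{2}}=\overline{2}$, puis exploitation des coefficients $(2,1)$ et $(1,2)$ pour en déduire les relations $\overline{a_{3}}=\mp\overline{a_{1}}$ et $\overline{a_{4}}=\mp\overline{a_{2}}$, avant de vérifier la réciproque par calcul direct. La seule nuance est que vous vérifiez explicitement la cohérence du coefficient $(1,1)$, ce que l'article laisse implicite (elle est en fait automatique une fois les trois autres coefficients fixés, puisque le déterminant vaut $\overline{1}$).
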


\begin{proof}

\begin{eqnarray*}
M_{4}(\overline{a_{1}},\overline{a_{2}},\overline{a_{3}},\overline{a_{4}}) &=& \begin{pmatrix}
    \overline{a_{4}a_{3}-1} & \overline{-a_{4}} \\
    \overline{a_{3}}    & \overline{-1} 
    \end{pmatrix}\begin{pmatrix}
    \overline{a_{2}} & \overline{-1} \\
    \overline{1}    & \overline{0} 
    \end{pmatrix}\begin{pmatrix}
    \overline{a_{1}} & \overline{-1} \\
    \overline{1}    & \overline{0} 
    \end{pmatrix}\\ 
		               &=& \begin{pmatrix}
    \overline{a_{4}a_{3}a_{2}-a_{4}-a_{2}} & \overline{-a_{4}a_{3}+1} \\
    \overline{a_{3}a_{2}-1}    & \overline{-a_{3}} 
    \end{pmatrix}\begin{pmatrix}
    \overline{a_{1}} & \overline{-1} \\
    \overline{1}    & \overline{0} 
    \end{pmatrix}\\
		              &=&\begin{pmatrix}
    \overline{a_{4}a_{3}a_{2}a_{1}-a_{4}a_{1}-a_{2}a_{1}-a_{4}a_{3}+1} & \overline{-a_{4}a_{3}a_{2}+a_{4}+a_{2}} \\
    \overline{a_{3}a_{2}a_{1}-a_{1}-a_{3}}    & \overline{1-a_{3}a_{2}} 
    \end{pmatrix}.\\
\end{eqnarray*}

Si $(\overline{a_{1}},\overline{a_{2}},\overline{a_{3}},\overline{a_{4}})$ est solution de $(E_{N})$ alors on a deux possibilités:

\begin{itemize}

\item $\overline{a_{3}}\overline{a_{2}}=\overline{0}$ et dans ce cas on a $\overline{-a_{4}a_{3}a_{2}}+\overline{a_{4}}+\overline{a_{2}}=\overline{a_{4}}+\overline{a_{2}}=\overline{0}$ et donc $\overline{a_{4}}=\overline{-a_{2}}$ et on a également $\overline{a_{3}}\overline{a_{2}}\overline{a_{1}}-\overline{a_{1}}-\overline{a_{3}}=-\overline{a_{1}}-\overline{a_{3}}=\overline{0}$ et donc $\overline{a_{1}}=-\overline{a_{3}}$.
\\

\item $\overline{a_{3}}\overline{a_{2}}=\overline{2}$ et dans ce cas on a $\overline{-a_{4}a_{3}a_{2}}+\overline{a_{4}}+\overline{a_{2}}=-\overline{a_{4}}+\overline{a_{2}}=\overline{0}$ et donc $\overline{a_{4}}=\overline{a_{2}}$ et on a également $\overline{a_{3}}\overline{a_{2}a_{1}}-\overline{a_{1}}-\overline{a_{3}}=\overline{a_{1}}-\overline{a_{3}}=\overline{0}$ et donc $\overline{a_{1}}=\overline{a_{3}}.$
\\

\end{itemize}

On vérifie en faisant le calcul que $(\overline{-a},\overline{b},\overline{a},\overline{-b})$ avec $\overline{a}\overline{b}=\overline{0}$ et $(\overline{a},\overline{b},\overline{a},\overline{b})$ avec $\overline{a}\overline{b}=\overline{2}$ sont solutions.

\end{proof}

\begin{rem}
{\rm Pour $n=4$, les solutions dépendent de la structure de $\mathbb{Z}/N \mathbb{Z}$. Par exemple, si $N$ est premier alors $\overline{a_{3}}\overline{a_{2}}=\overline{0}$ implique $\overline{a_{2}}=\overline{0}$ ou $\overline{a_{3}}=\overline{0}$ mais si $N=4$ alors on a par exemple $\overline{2} \times \overline{2}=\overline{0}$.}
\end{rem}

\begin{prop}

i)Les solutions de $(E_{N})$ de taille 3 sont irréductibles.
\\ii)Une solutions de $(E_{N})$ de taille 4 réductible contient $\overline{1}$ ou $\overline{-1}$.

\end{prop}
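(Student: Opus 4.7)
Le plan est d'exploiter un simple comptage de tailles. Par définition de $\oplus$, la somme d'un $m$-uplet et d'un $l$-uplet produit un $(m+l-2)$-uplet ; cette observation suffit à piloter les deux points.

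Pour (i), je raisonnerais par l'absurde : si une solution de taille $3$ était réductible, il existerait des solutions de $(E_N)$ de tailles $m,l \geq 3$ avec $m+l-2 = 3$, soit $m+l = 5$, ce qui contredit $m+l \geq 6$. Aucune solution de taille $3$ n'est donc réductible.

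Pour (ii), le même comptage force $m+l = 6$, et donc $m = l = 3$. J'invoquerais alors la proposition précédente, qui fournit les seules solutions de taille $3$ de $(E_N)$, à savoir $(\overline{1},\overline{1},\overline{1})$ et $(\overline{-1},\overline{-1},\overline{-1})$. En explicitant la somme
\[(\overline{a_1},\overline{a_2},\overline{a_3}) \oplus (\overline{b_1},\overline{b_2},\overline{b_3}) = (\overline{a_1+b_3},\,\overline{a_2},\,\overline{a_3+b_1},\,\overline{b_2}),\]
on constate que les entrées en deuxième et quatrième positions sont $\overline{a_2}$ et $\overline{b_2}$, donc appartiennent à $\{\overline{1},\overline{-1}\}$ ; quelles que soient les deux solutions de taille $3$ choisies, la somme contient donc $\overline{1}$ ou $\overline{-1}$. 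Enfin, la relation $\sim$ n'effectuant que des permutations circulaires et un retournement, elle préserve le multi-ensemble des entrées d'un uplet ; la solution réductible initiale de taille $4$ contient donc elle aussi $\overline{1}$ ou $\overline{-1}$.

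Aucun obstacle sérieux n'est à surmonter : tout se ramène à un décompte de tailles, l'ingrédient substantiel étant la classification des solutions de taille $3$ déjà établie à la Proposition précédente. Le seul point auquel il faudra prendre garde est de bien vérifier que $\sim$ préserve l'ensemble des entrées, point qui découle immédiatement de sa définition.
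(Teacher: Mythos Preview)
Your proposal is correct and follows essentially the same route as the paper: a size count forces $m+l=5$ for (i) and $m=l=3$ for (ii), then the classification of size-$3$ solutions concludes. You are simply more explicit than the paper in (ii), spelling out the $\oplus$ formula and the fact that $\sim$ preserves the multiset of entries, whereas the paper leaves these implicit.
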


\begin{proof}

i)Si un 3-uplet est somme d'un $m$-uplet avec un $l$-uplet alors $3=m+l-2$ et donc $m+l=5$ ce qui implique $m \leq 2$ ou $l \leq 2$. Donc, les solutions de $(E_{N})$ de taille 3 sont irréductibles.
\\
\\ii)Soit $(\overline{a_{1}},\overline{a_{2}},\overline{a_{3}},\overline{a_{4}})$ une solution de $(E_{N})$. 
\\
\\Si $(\overline{a_{1}},\overline{a_{2}},\overline{a_{3}},\overline{a_{4}})$ est réductible alors $(\overline{a_{1}},\overline{a_{2}},\overline{a_{3}},\overline{a_{4}})$ est équivalent à la somme d'un $m$-uplet solution de $(E_{N})$ avec un $l$-uplet solution de $(E_{N})$ avec $m, l \geq 3$. On a $m+l-2=4$ donc $m+l=6$ et comme $m, l \geq 3$ on a nécessairement $m=l=3$. Comme un 3-uplet solution de $(E_{N})$ contient $\overline{1}$ ou $\overline{-1}$, une solution réductible de $(E_{N})$ de taille 4 contient $\overline{1}$ ou $\overline{-1}$.

\end{proof}

\begin{rem}
{ \rm On démontre dans la proposition 3.8 la réciproque de ii).}
\end{rem}

\subsection{Opérations sur les solutions}

\leavevmode\par
\leavevmode\par L'objectif de cette partie est de justifier un certain nombre d'assertions présentes dans la section précédente et de donner des façons de construire des solutions à partir de solutions connues. La plupart de ces résultats ont déjà été démontrés dans \cite{CH} et \cite{WZ} mais on les redémontre ici afin d'avoir une présentation complète.

\begin{prop}

Si $(\overline{a_{1}},\ldots,\overline{a_{n}})$ et $(\overline{b_{1}},\ldots,\overline{b_{m}})$ sont solutions de $(E_{N})$ alors $(\overline{a_{1}},\ldots,\overline{a_{n}},\overline{b_{1}},\ldots,\overline{b_{m}})$ est solution de $(E_{N})$. En particulier, $(\overline{a_{1}},\ldots,\overline{a_{n}},\overline{a_{1}},\ldots,\overline{a_{n}})$ est solution de $(E_{N})$.

\end{prop}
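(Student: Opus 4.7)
The plan is to establish a matrix factorisation identity that immediately reduces the claim to multiplying two matrices that are each $\pm Id$. By construction,
\[
M_{n}(\overline{a_1},\ldots,\overline{a_n}) = \begin{pmatrix} \overline{a_n} & \overline{-1} \\ \overline{1} & \overline{0} \end{pmatrix} \cdots \begin{pmatrix} \overline{a_1} & \overline{-1} \\ \overline{1} & \overline{0} \end{pmatrix},
\]
so writing out the product defining $M_{n+m}(\overline{a_1},\ldots,\overline{a_n},\overline{b_1},\ldots,\overline{b_m})$ and splitting it between the $\overline{b_1}$-block and the $\overline{a_n}$-block gives the key identity
\[
M_{n+m}(\overline{a_1},\ldots,\overline{a_n},\overline{b_1},\ldots,\overline{b_m}) = M_{m}(\overline{b_1},\ldots,\overline{b_m})\cdot M_{n}(\overline{a_1},\ldots,\overline{a_n}).
\]
This is really just a restatement of associativity of the matrix product and requires no calculation beyond observing where to place the parentheses.

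Once this identity is in hand, the proposition is immediate. By hypothesis there exist $\varepsilon_1,\varepsilon_2\in\{\pm 1\}$ with $M_{n}(\overline{a_1},\ldots,\overline{a_n})=\varepsilon_1 Id$ and $M_{m}(\overline{b_1},\ldots,\overline{b_m})=\varepsilon_2 Id$, so the identity above yields $M_{n+m}(\overline{a_1},\ldots,\overline{a_n},\overline{b_1},\ldots,\overline{b_m})=\varepsilon_1\varepsilon_2\, Id=\pm Id$, which is exactly what $(E_N)$ requires. The particular case of the proposition follows by taking $(\overline{b_1},\ldots,\overline{b_m})=(\overline{a_1},\ldots,\overline{a_n})$, since a solution is in particular a solution (so the second hypothesis is automatic).

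There is no real obstacle here: the whole content of the proposition is the factorisation identity, and the only care required is to make sure the order of the factors is correct (the $b$-block ends up on the left because the indices in the defining product are read from right to left). No new ideas are needed, and in particular one does not have to distinguish the signs $\varepsilon_1,\varepsilon_2$.
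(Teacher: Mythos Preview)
Your proof is correct and follows exactly the same route as the paper: both establish the factorisation $M_{n+m}(\overline{a_1},\ldots,\overline{a_n},\overline{b_1},\ldots,\overline{b_m}) = M_m(\overline{b_1},\ldots,\overline{b_m})\,M_n(\overline{a_1},\ldots,\overline{a_n})$ and then multiply the two $\pm Id$ factors. Your exposition is slightly more explicit about why the order of the factors comes out as it does, but the argument is identical.
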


\begin{proof}

$\exists (\epsilon,\mu) \in \{\pm 1\}$ tels que $M_{n}(\overline{a_{1}},\ldots,\overline{a_{n}})=\overline{\epsilon}Id$ et $M_{m}(\overline{b_{1}},\ldots,\overline{b_{m}})=\overline{\mu}Id$ (car $(\overline{a_{1}},\ldots,\overline{a_{n}})$ et $(\overline{b_{1}},\ldots,\overline{b_{m}})$ sont solutions de $(E_{N})$). Donc, on a
\[M_{n+m}(\overline{a_{1}},\ldots,\overline{a_{n}},\overline{b_{1}},\ldots,\overline{b_{m}})=M_{m}(\overline{b_{1}},\ldots,\overline{b_{m}})M_{n}(\overline{a_{1}},\ldots,\overline{a_{n}})=\overline{\epsilon\mu}Id.\] Donc, comme $\epsilon\mu  \in \{\pm 1\}$, $(\overline{a_{1}},\ldots,\overline{a_{n}},\overline{b_{1}},\ldots,\overline{b_{m}})$ est solution de $(E_{N})$.

\end{proof}

\begin{prop}

i) $(\overline{a_{1}},\ldots,\overline{a_{n}})$ est solution de $(E_{N})$ si et seulement si $(\overline{a_{n}},\ldots,\overline{a_{1}})$ est solution de $(E_{N})$.
\\
\\ii) $(\overline{a_{1}},\ldots,\overline{a_{n}})$ est solution de $(E_{N})$ si et seulement si $(\overline{-a_{1}},\ldots,\overline{-a_{n}})$ est solution de $(E_{N})$.
\\
\\iii) Si $(\overline{a_{1}},\ldots,\overline{a_{n}}) \sim (\overline{b_{1}},\ldots,\overline{b_{n}})$ alors $(\overline{a_{1}},\ldots,\overline{a_{n}})$ est solution de $(E_{N})$ si et seulement si $(\overline{b_{1}},\ldots,\overline{b_{n}})$ est solution de $(E_{N})$.

\end{prop}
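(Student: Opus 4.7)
Le plan est de ramener les trois assertions à des identités de conjugaison portant sur les facteurs $K_{a}:=\begin{pmatrix} a & -1 \\ 1 & 0 \end{pmatrix}$ qui composent $M_{n}$. L'ingrédient-clé sera la matrice auxiliaire $P := \begin{pmatrix} 1 & 0 \\ 0 & -1 \end{pmatrix}$, involutive ($P^{2}=Id$), qui permet à la fois de produire la transposée de $K_{a}$ et la matrice $K_{-a}$ par conjugaison, et qui laisse $\pm Id$ invariant.

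Pour (i), je vérifierais d'abord par calcul direct l'identité $K_{a}^{T}=P K_{a} P$. En insérant des $P^{2}=Id$ entre les facteurs de $M_{n}(\overline{a_{1}},\ldots,\overline{a_{n}})^{T}=K_{a_{1}}^{T}\cdots K_{a_{n}}^{T}$, on obtient la relation $M_{n}(\overline{a_{1}},\ldots,\overline{a_{n}})^{T}=P\, M_{n}(\overline{a_{n}},\ldots,\overline{a_{1}})\, P$, qui donne immédiatement l'équivalence annoncée puisque $\pm Id$ est stable par transposition et par conjugaison par $P$.

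Pour (ii), je m'appuierais sur la variante $K_{-a}=-P K_{a} P$, également immédiate par calcul direct. Le même télescopage donne alors $M_{n}(\overline{-a_{1}},\ldots,\overline{-a_{n}})=(-1)^{n}\, P\, M_{n}(\overline{a_{1}},\ldots,\overline{a_{n}})\, P$, ce qui permet de conclure. Pour (iii), la relation $\sim$ étant engendrée par les permutations cycliques et le retournement, le cas du retournement est déjà couvert par (i); pour une permutation cyclique élémentaire, il suffit d'invoquer l'identité $M_{n}(\overline{a_{n}},\overline{a_{1}},\ldots,\overline{a_{n-1}})=K_{a_{n}}^{-1}\, M_{n}(\overline{a_{1}},\ldots,\overline{a_{n}})\, K_{a_{n}}$ et d'utiliser la centralité de $\pm Id$.

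L'essentiel du travail se réduit donc à la mise au jour des deux identités de conjugaison $K_{a}^{T}=P K_{a} P$ et $K_{-a}=-P K_{a} P$: une fois celles-ci en main, le reste n'est que manipulation algébrique élémentaire. Le seul point demandant un peu d'attention est le facteur $(-1)^{n}$ apparaissant dans (ii), qui n'affecte cependant pas la conclusion puisqu'il envoie $\{\pm Id\}$ sur lui-même.
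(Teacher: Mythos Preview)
Your argument is correct, but it follows a different route from the paper's. The paper chooses the anti-diagonal involution $K=\begin{pmatrix}0&1\\1&0\end{pmatrix}$ and the identity $K_{a}^{-1}=K\,K_{a}\,K$ to handle~(i), obtaining $M_{n}(\overline{a_{n}},\ldots,\overline{a_{1}})=K\,M_{n}(\overline{a_{1}},\ldots,\overline{a_{n}})^{-1}K$; it then proves~(ii) by relating $M_{n}(\overline{-a_{1}},\ldots,\overline{-a_{n}})$ to $(-1)^{n}M_{n}(\overline{a_{n}},\ldots,\overline{a_{1}})^{T}$ and invoking~(i). You instead use the diagonal involution $P=\begin{pmatrix}1&0\\0&-1\end{pmatrix}$ together with $K_{a}^{T}=P\,K_{a}\,P$ for~(i) and $K_{-a}=-P\,K_{a}\,P$ for~(ii). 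The gain of your variant is uniformity: a single auxiliary matrix covers both~(i) and~(ii), and your proof of~(ii) is self-contained rather than reliant on~(i). The paper's choice has the mild conceptual advantage that the relation in~(i) passes through the \emph{inverse} of $M_{n}$, which makes it transparent why solving $(E_{N})$ is unaffected. For~(iii), the paper simply cites~(i) together with the cyclic invariance already noted in the introduction, whereas you supply the one-line conjugation argument for a cyclic shift explicitly; both are fine.
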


\begin{proof} La preuve suivante est une adaptation de la remarque 2.6 de \cite{CH}.
\\
\\i) On pose $K=\begin{pmatrix}
    0 & 1 \\
    1    & 0 
    \end{pmatrix}$.
\\
\\Supposons $(\overline{a_{1}},\ldots,\overline{a_{n}})$ solution de $(E_{N})$. On a  $\begin{pmatrix}
   \overline{a} & \overline{-1} \\
    \overline{1}    & \overline{0} 
    \end{pmatrix}^{-1}=\overline{K}\begin{pmatrix}
   \overline{a} & \overline{-1} \\
    \overline{1}    & \overline{0} 
    \end{pmatrix}\overline{K}$ et $\overline{K}^{2}=Id$. Donc,
		
\begin{eqnarray*}
M_{n}(\overline{a_{n}},\ldots,\overline{a_{1}}) &=& (\overline{K}\begin{pmatrix}
    \overline{a_{1}} & \overline{-1} \\
    \overline{1}    & \overline{0} 
    \end{pmatrix}^{-1}\overline{K})\ldots(\overline{K}\begin{pmatrix}
    \overline{a_{n}} & \overline{-1} \\
    \overline{1}    & \overline{0} 
    \end{pmatrix}^{-1}\overline{K})\\ 
		                   &=& \overline{K}(\begin{pmatrix}
    \overline{a_{n}} & \overline{-1} \\
    \overline{1}    & \overline{0} 
    \end{pmatrix}\ldots\begin{pmatrix}
    \overline{a_{1}} & \overline{-1} \\
    \overline{1}    & \overline{0} 
    \end{pmatrix})^{-1}\overline{K}\\
		              &=& \overline{K}M_{n}(\overline{a_{1}},\ldots,\overline{a_{n}})^{-1}\overline{K}\\
									&=& \pm Id~{\rm car}~(\overline{a_{1}},\ldots,\overline{a_{n}})~{\rm solution~de}~(E_{N}).\\								
\end{eqnarray*}

\noindent Donc, $(\overline{a_{n}},\ldots,\overline{a_{1}})$ est solution de $(E_{N})$. 
\\
\\Si $(\overline{a_{n}},\ldots,\overline{a_{1}})$ est solution de $(E_{N})$ alors par ce qui précède $(\overline{a_{1}},\ldots,\overline{a_{n}})$ est solution de $(E_{N})$.
\\
\\ii) Si $A \in SL_{2}(\mathbb{Z}/N\mathbb{Z})$ on note $A^{T}$ la transposée de $A$.
On a, \begin{eqnarray*}
M_{n}(\overline{-a_{1}},\ldots,\overline{-a_{n}}) &=& \begin{pmatrix}
    \overline{-a_{n}} & \overline{-1} \\
    \overline{1}    & \overline{0} 
    \end{pmatrix}\ldots\begin{pmatrix}
    \overline{-a_{1}} & \overline{-1} \\
    \overline{1}    & \overline{0} 
    \end{pmatrix}\\ 
		                   &=& (\overline{-1})^{n}\begin{pmatrix}
    \overline{a_{n}} & \overline{1} \\
    \overline{-1}    & \overline{0} 
    \end{pmatrix}\ldots\begin{pmatrix}
    \overline{a_{1}} & \overline{1} \\
    \overline{-1}    & \overline{0} 
    \end{pmatrix}\\
		              &=& (\overline{-1})^{n}\begin{pmatrix}
   \overline{a_{n}} & \overline{-1} \\
    \overline{1}    & \overline{0} 
   \end{pmatrix}^{T}\ldots\begin{pmatrix}
   \overline{a_{1}} & \overline{-1} \\
    \overline{1}    & \overline{0} 
    \end{pmatrix}^{T}\\
									&=& (\overline{-1})^{n}(\begin{pmatrix}
   \overline{a_{1}} & \overline{-1} \\
    \overline{1}    & \overline{0} 
   \end{pmatrix}\ldots\begin{pmatrix}
   \overline{a_{n}} & \overline{-1} \\
    \overline{1}    & \overline{0} 
    \end{pmatrix})^{T}\\
									&=&(\overline{-1})^{n}M_{n}(\overline{a_{n}},\ldots,\overline{a_{1}})^{T}.
\end{eqnarray*}

Donc, $(\overline{-a_{1}},\ldots,\overline{-a_{n}})$ est solution de $(E_{N})$ si et seulement si $(\overline{a_{n}},\ldots,\overline{a_{1}})$ est solution de $(E_{N})$. Par i), $(\overline{-a_{1}},\ldots,\overline{-a_{n}})$ est solution de $(E_{N})$ si et seulement si $(\overline{a_{1}},\ldots,\overline{a_{n}})$ est solution de $(E_{N})$.
\\
\\ iii) C'est une conséquence de i) et de l'invariance par permutations circulaires des solutions.
		
\end{proof}

\begin{prop}

Soit $(\overline{b_{1}},\ldots,\overline{b_{m}})$ une solution de $(E_{N})$. Soit $(\overline{a_{1}},\ldots,\overline{a_{n}}) \in (\mathbb{Z}/N\mathbb{Z})^{n}$ alors la somme $(\overline{a_{1}},\ldots,\overline{a_{n}}) \oplus (\overline{b_{1}},\ldots,\overline{b_{m}})$ est solution de $(E_{N})$ si et seulement si $(\overline{a_{1}},\ldots,\overline{a_{n}})$ est solution de $(E_{N})$.

\end{prop}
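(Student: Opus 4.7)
L'idée centrale est d'obtenir une identité matricielle qui exprime le produit $M_{n+m-2}$ associé à la somme comme un conjugué (au signe près) de $M_{n}(\overline{a_{1}},\ldots,\overline{a_{n}})$: on pourra alors conclure immédiatement puisque la conjugaison préserve la propriété d'être égal à $\pm Id$.

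Je commencerais par établir par un calcul direct l'identité clé suivante: en posant $\overline{S}=\begin{pmatrix} \overline{0} & \overline{-1} \\ \overline{1} & \overline{0} \end{pmatrix}$, on a pour tous $\overline{a},\overline{b} \in \mathbb{Z}/N\mathbb{Z}$
\[\begin{pmatrix} \overline{a+b} & \overline{-1} \\ \overline{1} & \overline{0} \end{pmatrix} = -\begin{pmatrix} \overline{b} & \overline{-1} \\ \overline{1} & \overline{0} \end{pmatrix} \overline{S} \begin{pmatrix} \overline{a} & \overline{-1} \\ \overline{1} & \overline{0} \end{pmatrix}.\]
Cette identité permet de décomposer une matrice élémentaire associée à une somme $\overline{a+b}$ en insérant $\overline{S}$ entre deux matrices élémentaires ordinaires. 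Je l'appliquerais ensuite aux deux positions mixtes $\overline{a_{n}+b_{1}}$ et $\overline{a_{1}+b_{m}}$ apparaissant dans la somme, en choisissant l'ordre des facteurs de sorte que les termes en $\overline{a_{i}}$ se regroupent au centre du produit et ceux en $\overline{b_{j}}$ sur les côtés. Les deux signes moins se compensent et, après regroupement, on obtient
\[M_{n+m-2}((\overline{a_{1}},\ldots,\overline{a_{n}}) \oplus (\overline{b_{1}},\ldots,\overline{b_{m}})) = M_{m-1}(\overline{b_{1}},\ldots,\overline{b_{m-1}}) \, \overline{S} \, M_{n}(\overline{a_{1}},\ldots,\overline{a_{n}}) \, \overline{S} \begin{pmatrix} \overline{b_{m}} & \overline{-1} \\ \overline{1} & \overline{0} \end{pmatrix}.\]

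J'utiliserais alors l'hypothèse que $(\overline{b_{1}},\ldots,\overline{b_{m}})$ est solution: $M_{m}(\overline{b_{1}},\ldots,\overline{b_{m}})=\overline{\mu}Id$ avec $\mu \in \{\pm 1\}$, ce qui donne $M_{m-1}(\overline{b_{1}},\ldots,\overline{b_{m-1}}) = \overline{\mu}\begin{pmatrix} \overline{b_{m}} & \overline{-1} \\ \overline{1} & \overline{0} \end{pmatrix}^{-1}$. En posant $Q=\overline{S}\begin{pmatrix} \overline{b_{m}} & \overline{-1} \\ \overline{1} & \overline{0} \end{pmatrix}$ et en se souvenant que $\overline{S}^{-1}=-\overline{S}$, l'expression précédente se simplifie en
\[M_{n+m-2}((\overline{a_{1}},\ldots,\overline{a_{n}}) \oplus (\overline{b_{1}},\ldots,\overline{b_{m}})) = \pm Q^{-1} M_{n}(\overline{a_{1}},\ldots,\overline{a_{n}}) Q,\]
ce qui entraîne directement l'équivalence voulue puisque $Q$ est inversible et que la conjugaison préserve $\pm Id$.

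L'obstacle principal n'est pas de nature conceptuelle mais calculatoire: il faut tenir soigneusement la comptabilité des signes et des indices dans les produits, et s'assurer que l'argument demeure correct dans les cas limites $n=2$ ou $m=2$ où les blocs intermédiaires $\overline{a_{2}},\ldots,\overline{a_{n-1}}$ ou $\overline{b_{2}},\ldots,\overline{b_{m-1}}$ sont vides; dans ces cas les formules restent cohérentes et le raisonnement passe sans changement.
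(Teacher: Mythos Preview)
Your argument is correct and rests on the same key identity as the paper's proof, namely $M_{1}(\overline{a+b})=-M_{1}(\overline{b})\,M_{1}(\overline{0})\,M_{1}(\overline{a})$ (your $\overline{S}$ is exactly the paper's $M_{1}(\overline{0})$), applied at the two mixed positions to regroup the $a$-block in the middle and the $b$-terms on the outside. Where you diverge is in the endgame: the paper treats the two implications separately, substituting $M_{n}(\overline{a_{1}},\ldots,\overline{a_{n}})=\overline{\epsilon}Id$ for one direction and, for the converse, starting instead from a cyclic shift of the sum and invoking the cyclic invariance of solutions. Your rewriting as $-\overline{\mu}\,Q^{-1}M_{n}(\overline{a_{1}},\ldots,\overline{a_{n}})\,Q$ is a genuine simplification: it yields both directions at once from the single observation that conjugation preserves $\pm Id$, and it avoids any appeal to cyclic invariance. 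The paper's version has the minor advantage of tracking the sign explicitly ($\overline{-\mu\epsilon}$), but your packaging is cleaner and arguably more conceptual.
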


\begin{proof}

La preuve suivante est adaptée de la preuve du lemme 2.7 de \cite{C} et de la preuve du lemme 1.9 de \cite{WZ}.
\\
\\$(\overline{b_{1}},\ldots,\overline{b_{m}})$ est une solution de $(E_{N})$ donc il existe $\mu$ appartenant à $\{\pm 1\}$ tel que $M_{m}(\overline{b_{1}},\ldots,\overline{b_{m}})=\overline{\mu}Id$.
\\
\\Si $(\overline{a_{1}},\ldots,\overline{a_{n}})$ est solution de $(E_{N})$. $\exists \epsilon \in \{\pm 1\}$ tel que $M_{n}(\overline{a_{1}},\ldots,\overline{a_{n}})=\overline{\epsilon}Id$. On vérifie que \[M_{1}(\overline{a_{n}+b_{1}})=\overline{-1} \times M_{1}(\overline{b_{1}})M_{1}(\overline{0})M_{1}(\overline{a_{n}})=\overline{-1} \times M_{1}(\overline{a_{n}})M_{1}(\overline{0})M_{1}(\overline{b_{1}}),\] \[M_{1}(\overline{a_{1}+b_{m}})=\overline{-1} \times M_{1}(\overline{a_{1}})M_{1}(\overline{0})M_{1}(\overline{b_{m}})=\overline{-1} \times M_{1}(\overline{b_{m}})M_{1}(\overline{0})M_{1}(\overline{a_{1}}).\] 
\\Notons $M=M_{n+m-2}(\overline{a_{1}+b_{m}},\overline{a_{2}},\ldots,\overline{a_{n-1}},\overline{a_{n}+b_{1}},\overline{b_{2}},\ldots,\overline{b_{m-1}})$. On a
\begin{eqnarray*}
M &=& M_{m-2}(\overline{b_{2}},\ldots,\overline{b_{m-1}})M_{1}(\overline{a_{n}+b_{1}})M_{n-2}(\overline{a_{2}},\ldots,\overline{a_{n-1}})M_{1}(\overline{a_{1}+b_{m}}) \\
  &=& M_{m-2}(\overline{b_{2}},\ldots,\overline{b_{m-1}})M_{1}(\overline{b_{1}})M_{1}(\overline{0})M_{1}(\overline{a_{n}})M_{n-2}(\overline{a_{2}},\ldots,\overline{a_{n-1}})M_{1}(\overline{a_{1}})M_{1}(\overline{0})M_{1}(\overline{b_{m}}) \\
	&=& M_{m-1}(\overline{b_{1}},\ldots,\overline{b_{m-1}})M_{1}(\overline{0})M_{n}(\overline{a_{1}},\ldots,\overline{a_{n}})M_{1}(\overline{0})M_{1}(\overline{b_{m}}) \\
	&=& M_{m-1}(\overline{b_{1}},\ldots,\overline{b_{m-1}})M_{1}(\overline{0})(\overline{\epsilon}Id)M_{1}(\overline{0})M_{1}(\overline{b_{m}}) \\
	&=& \overline{\epsilon} M_{m-1}(\overline{b_{1}},\ldots,\overline{b_{m-1}})M_{2}(\overline{0},\overline{0})M_{1}(\overline{b_{m}}) \\
	&=& \overline{\epsilon} M_{m-1}(\overline{b_{1}},\ldots,\overline{b_{m-1}})(\overline{-1}Id)M_{1}(\overline{b_{m}}) \\
	&=& \overline{-\epsilon} M_{m}(\overline{b_{m}},\overline{b_{1}},\ldots,\overline{b_{m-1}}) \\
	&=& \overline{-\mu\epsilon} Id. \\
\end{eqnarray*}

\noindent Donc, $(\overline{a_{1}},\ldots,\overline{a_{n}}) \oplus (\overline{b_{1}},\ldots,\overline{b_{m}})$ est solution de $(E_{N})$.
\\
\\Si $(\overline{a_{1}},\ldots,\overline{a_{n}}) \oplus (\overline{b_{1}},\ldots,\overline{b_{m}})$ est solution de $(E_{N})$. $(\overline{a_{1}+b_{m}},\overline{a_{2}},\ldots,\overline{a_{n-1}},\overline{a_{n}+b_{1}},\overline{b_{2}},\ldots,\overline{b_{m-1}})$ est solution de $(E_{N})$ donc $(\overline{a_{2}},\ldots,\overline{a_{n-1}},\overline{a_{n}+b_{1}},\overline{b_{2}},\ldots,\overline{b_{m-1}},\overline{a_{1}+b_{m}})$ est solution de $(E_{N})$. $\exists \alpha \in \{\pm 1\}$ tel que $M_{n+m-2}(\overline{a_{2}},\ldots,\overline{a_{n-1}},\overline{a_{n}+b_{1}},\overline{b_{2}},\ldots,\overline{b_{m-1}},\overline{a_{1}+b_{m}})= \overline{\alpha} Id$. On a
\begin{eqnarray*}
\overline{\alpha} Id &=& M_{1}(\overline{a_{1}+b_{m}})M_{m-2}(\overline{b_{2}},\ldots,\overline{b_{m-1}})M_{1}(\overline{a_{n}+b_{1}})M_{n-2}(\overline{a_{2}},\ldots,\overline{a_{n-1}}) \\
  &=& M_{1}(\overline{a_{1}})M_{1}(\overline{0})M_{1}(\overline{b_{m}})M_{m-2}(\overline{b_{2}},\ldots,\overline{b_{m-1}})M_{1}(\overline{b_{1}})M_{1}(\overline{0})M_{1}(\overline{a_{n}})M_{n-2}(\overline{a_{2}},\ldots,\overline{a_{n-1}}) \\
	&=& M_{1}(\overline{a_{1}})M_{1}(\overline{0})M_{m}(\overline{b_{1}},\ldots,\overline{b_{m}})M_{1}(\overline{0})M_{n-1}(\overline{a_{2}},\ldots,\overline{a_{n}}) \\
	&=&  \overline{\mu}M_{1}(\overline{a_{1}})M_{2}(\overline{0},\overline{0})M_{n-1}(\overline{a_{2}},\ldots,\overline{a_{n}})\\
	&=&  \overline{-\mu}M_{n}(\overline{a_{2}},\ldots,\overline{a_{n}},\overline{a_{1}}).\\
\end{eqnarray*} 

\noindent Ainsi, $M_{n}(\overline{a_{2}},\ldots,\overline{a_{n}},\overline{a_{1}})=\overline{-\alpha\mu}Id$. Donc, $(\overline{a_{2}},\ldots,\overline{a_{n}},\overline{a_{1}})$ est solution de $(E_{N})$ et donc $(\overline{a_{1}},\ldots,\overline{a_{n}})$ est solution de $(E_{N})$.

\end{proof}

On déduit de ce résultat qu'une solution $(\overline{c_{1}},\ldots,\overline{c_{n}})$ avec $n \geq 3$ de $(E_{N})$ est  réductible s'il existe une solution de $(E_{N})$ $(\overline{b_{1}},\ldots,\overline{b_{l}})$ et un $m$-uplet $(\overline{a_{1}},\ldots,\overline{a_{m}})$ tels que $m \geq 3$ et $l \geq 3$ et \[(\overline{c_{1}},\ldots,\overline{c_{n}}) \sim (\overline{a_{1}},\ldots,\overline{a_{m}}) \oplus (\overline{b_{1}},\ldots,\overline{b_{l}}).\]

On en déduit le résultat suivant

\begin{prop}

i) Si $n \geq 4$ alors une solution de $(E_{N})$ contenant $\overline{1}$ ou $\overline{-1}$ est réductible.
\\ ii) Si $n \geq 5$ alors une solution de $(E_{N})$ contenant $\overline{0}$ est réductible.

\end{prop}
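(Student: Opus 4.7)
Mon plan est d'utiliser la proposition 3.7 à l'envers~: étant donnée une solution $(\overline{c_{1}},\ldots,\overline{c_{n}})$ contenant une valeur spéciale $\overline{1}$, $\overline{-1}$ ou $\overline{0}$, je vais exhiber une décomposition $(\overline{a_{1}},\ldots,\overline{a_{m}}) \oplus (\overline{b_{1}},\ldots,\overline{b_{l}})$ où le second facteur est une petite solution déjà connue, puis invoquer la proposition 3.7 pour en déduire que le premier facteur est lui aussi solution de $(E_{N})$.

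Par la proposition 3.6 iii), on peut, quitte à permuter cycliquement, supposer que la valeur spéciale se trouve en dernière position, c'est-à-dire en $\overline{c_{n}}$. Pour i), j'utilise la solution $(\overline{\epsilon},\overline{\epsilon},\overline{\epsilon})$ de taille 3 avec $\overline{\epsilon} = \overline{\pm 1}$, dont le caractère solution est assuré par la proposition 3.2. La formule explicite de la somme donne
\[(\overline{a_{1}},\ldots,\overline{a_{n-1}}) \oplus (\overline{\epsilon},\overline{\epsilon},\overline{\epsilon}) = (\overline{a_{1}+\epsilon},\overline{a_{2}},\ldots,\overline{a_{n-2}},\overline{a_{n-1}+\epsilon},\overline{\epsilon}).\]
Il suffit alors de poser $\overline{a_{1}} := \overline{c_{1}-\epsilon}$, $\overline{a_{i}} := \overline{c_{i}}$ pour $2 \leq i \leq n-2$, et $\overline{a_{n-1}} := \overline{c_{n-1}-\epsilon}$~: la somme obtenue coïncide avec $(\overline{c_{1}},\ldots,\overline{c_{n}})$. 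La proposition 3.7 donne alors immédiatement que $(\overline{a_{1}},\ldots,\overline{a_{n-1}})$ est solution, et l'hypothèse $n \geq 4$ fournit $n-1 \geq 3$, seuil requis par la définition de la réductibilité.

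Pour ii), la même stratégie s'applique à condition de disposer d'une solution de taille au moins 3 dont une entrée vaut $\overline{0}$~; j'utilise pour cela la famille $(\overline{0},\overline{c},\overline{0},\overline{-c})$ de taille 4, solution pour tout $\overline{c} \in \mathbb{Z}/N\mathbb{Z}$ d'après la proposition 3.3 (cas $\overline{a}\overline{b}=\overline{0}$ avec $\overline{a}=\overline{0}$). On place à nouveau le $\overline{0}$ en position $n$ et l'on pose $\overline{c} := \overline{c_{n-1}}$. La formule donne
\[(\overline{a_{1}},\ldots,\overline{a_{n-2}}) \oplus (\overline{0},\overline{c},\overline{0},\overline{-c}) = (\overline{a_{1}-c},\overline{a_{2}},\ldots,\overline{a_{n-3}},\overline{a_{n-2}},\overline{c},\overline{0}).\]
Il suffit donc de poser $\overline{a_{1}} := \overline{c_{1}+c_{n-1}}$ et $\overline{a_{i}} := \overline{c_{i}}$ pour $2 \leq i \leq n-2$~; la proposition 3.7 assure à nouveau que $(\overline{a_{1}},\ldots,\overline{a_{n-2}})$ est solution, et l'hypothèse $n \geq 5$ donne $n-2 \geq 3$.

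Je n'anticipe pas de difficulté technique~: tout repose sur une lecture inverse de la formule définissant $\oplus$. Le seul point à surveiller est que les cas extrêmes ($n=4$ pour i), $n=5$ pour ii)) ne dégénèrent pas, ce qui est bien le cas puisque la formule de la somme reste valide dès que les deux facteurs ont taille au moins 2.
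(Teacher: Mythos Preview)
Your proof is correct and follows essentially the same approach as the paper: for i) you factor out the solution $(\overline{\epsilon},\overline{\epsilon},\overline{\epsilon})$, and for ii) you factor out a solution of the form $(\overline{0},\overline{c},\overline{0},\overline{-c})$, exactly as the paper does (up to the choice of which cyclic shift places the special entry). The only cosmetic difference is that the paper keeps the special entry at an arbitrary position $i$ and writes the corresponding cyclic shift explicitly, whereas you normalize it to the last position first via Proposition~3.6~iii); both arguments then conclude by the remark following Proposition~3.7.
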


\begin{proof}

i) Soit $(\overline{a_{1}},\ldots,\overline{a_{n}})$ (avec $n \geq 4$) une solution de $(E_{N})$. Si $\exists \epsilon \in \{\pm \overline{1}\}$ et si $\exists i \in [\![1;n]\!]$ tels que $\overline{a_{i}}=\overline{\epsilon}$ alors on a \[(\overline{a_{i+1}}\ldots,\overline{a_{n}},\overline{a_{1}},\ldots,\overline{a_{i}})=(\overline{a_{i+1}-\epsilon}\ldots,\overline{a_{n}},\overline{a_{1}},\ldots,\overline{a_{i-1}-\epsilon}) \oplus (\overline{\epsilon},\overline{\epsilon},\overline{\epsilon}).\]
Donc, $(\overline{a_{1}},\ldots,\overline{a_{n}})$ est réductible.
\\
\\ii) Soit $(\overline{a_{1}},\ldots,\overline{a_{n}})$ (avec $n \geq 5$) une solution de $(E_{N})$. Si $\exists i \in [\![1;n]\!]$ tel que $\overline{a_{i}}=\overline{0}$ alors on a \[(\overline{a_{i+2}}\ldots,\overline{a_{n}},\overline{a_{1}},\ldots,\overline{a_{i}},\overline{a_{i+1}})=(\overline{a_{i+2}}\ldots,\overline{a_{n}},\overline{a_{1}},\ldots,\overline{a_{i-1}+a_{i+1}}) \oplus (\overline{-a_{i+1}},\overline{0},\overline{a_{i+1}},\overline{0}).\]
Donc, $(\overline{a_{1}},\ldots,\overline{a_{n}})$ est réductible.

\end{proof}

\begin{rem}

{\rm La réciproque est fausse. Par exemple, si $N=4$, $(\overline{2},\overline{2},\overline{2},\overline{2},\overline{2},\overline{2},\overline{2},\overline{2})$ ne contient pas $\overline{1}$, $\overline{-1}$ ou $\overline{0}$ mais est une solution de $(E_{4})$ réductible puisque $(\overline{2},\overline{2},\overline{2},\overline{2},\overline{2},\overline{2},\overline{2},\overline{2})=(\overline{0},\overline{2},\overline{2},\overline{2},\overline{2},\overline{0}) \oplus (\overline{2},\overline{2},\overline{2},\overline{2})$.}

\end{rem}

\subsection{Solutions monomiales minimales}

\leavevmode\par
\leavevmode\par Dans cette sous-partie, on s'intéresse aux solutions dont toutes les composantes sont identiques. En particulier, on cherche à connaitre des solutions valables pour tout $N$ (ou au moins pour des valeurs de $N$ vérifiant certaines propriétés) et à savoir si elles sont ou non irréductibles.

\subsubsection{Définitions et premiers résultats} 

\leavevmode\par
\leavevmode\par On commence par la définition suivante: 

\begin{defn}

Soient $n \in \mathbb{N}^{*}$ et $\overline{k} \in \mathbb{Z}/N\mathbb{Z}$. On appelle solution $(n,\overline{k})$-monomiale un $n$-uplet d'\'el\'ements de $\mathbb{Z}/ N \mathbb{Z}$ constitu\'e uniquement de $\overline{k} \in \mathbb{Z}/N\mathbb{Z}$ et solution de $(E_{N})$.
\\
\\ On appelle solution monomiale une solution pour laquelle il existe $m \in \mathbb{N}^{*}$ et $\overline{l} \in \mathbb{Z}/N\mathbb{Z}$ tels qu'elle est $(m,\overline{l})$-monomiale.
\\
\\ On appelle solution $\overline{k}$-monomiale minimale une solution $(n,\overline{k})$-monomiale avec $n$ le plus petit entier pour lequel il existe une solution $(n,\overline{k})$-monomiale.
\\
\\ On appelle solution monomiale minimale une solution $\overline{k}$-monomiale minimale pour un $\overline{k} \in \mathbb{Z}/N\mathbb{Z}$.

\end{defn}

\begin{ex}

{\rm $(\overline{1},\overline{1},\overline{1},\overline{1},\overline{1},\overline{1})$ est une solution $(6,\overline{1})$-monomiale de $(E_{N})$ et $(\overline{1},\overline{1},\overline{1})$ est une solution $\overline{1}$-monomiale minimale de $(E_{N})$. 
}

\end{ex}

\begin{rems}
{\rm i) Si $(\overline{k},\ldots,\overline{k}) \in (\mathbb{Z}/N\mathbb{Z})^{n}$ est une solution $\overline{k}$-monomiale minimale de $(E_{N})$ alors, par la proposition 3.6 ii), $(\overline{-k},\ldots,\overline{-k}) \in (\mathbb{Z}/N\mathbb{Z})^{n}$ est une solution $\overline{-k}$-monomiale minimale de $(E_{N})$.
\\
\\ ii) La taille d'une solution $\overline{k}$-monomiale minimale de $(E_{N})$ est l'ordre de $\begin{pmatrix}
   \overline{k}   & \overline{-1} \\
   \overline{1} & \overline{0}
\end{pmatrix}$ dans le groupe $PSL_{2}(\mathbb{Z}/N\mathbb{Z})$. En particulier, si $N$ est premier alors la taille d'une solution $\overline{k}$-monomiale minimale de $(E_{N})$ est inférieure à $N$ car $N$ est l'ordre maximal des éléments de $PSL_{2}(\mathbb{Z}/N\mathbb{Z})$ (voir \cite{KS}). Cela n'est plus vrai si $N$ n'est pas premier. Par exemple, si $N=10$, une solution $\overline{3}$-monomiale minimale de $(E_{10})$ est de taille 15.
}
\end{rems}

On commence par le résultat suivant donnant une solution valable dans le cas où $N$ est un carré.

\begin{prop}

Si $N=l^{2}$ avec $l \geq 2$ alors $(\overline{l},\ldots,\overline{l}) \in (\mathbb{Z}/N\mathbb{Z})^{2l}$ est solution de $(E_{N})$.

\end{prop}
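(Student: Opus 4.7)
The plan is to directly compute $M_{2l}(\overline{l},\ldots,\overline{l}) = B^{2l}$ in $M_2(\mathbb{Z}/l^2\mathbb{Z})$, where $B = \begin{pmatrix} \overline{l} & \overline{-1} \\ \overline{1} & \overline{0} \end{pmatrix}$, and show that this equals $\pm Id$. The key observation is the additive decomposition $B = S + lE$ where $S = \begin{pmatrix} 0 & -1 \\ 1 & 0 \end{pmatrix}$ and $E = \begin{pmatrix} 1 & 0 \\ 0 & 0 \end{pmatrix}$. A quick direct calculation gives $S^2 = -Id$, $E^2 = E$, and $SE + ES = S$, so expanding $B^2 = (S+lE)^2 = S^2 + l(SE+ES) + l^2 E^2$ and reducing modulo $l^2$ yields
\[B^2 \equiv lS - Id \pmod{l^2}.\]

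Next, I would apply the binomial theorem to $B^{2l} = (B^2)^l \equiv (lS - Id)^l \pmod{l^2}$, which is legitimate because $lS$ and $-Id$ commute. In the expansion
\[(lS - Id)^l = \sum_{k=0}^{l} \binom{l}{k}(lS)^k (-Id)^{l-k},\]
the $k = 0$ term contributes $(-1)^l Id$, the $k = 1$ term equals $l \cdot lS \cdot (-1)^{l-1} Id = l^2 (-1)^{l-1} S \equiv 0 \pmod{l^2}$, and every term with $k \geq 2$ carries a factor $l^k$ divisible by $l^2$. Hence $B^{2l} \equiv (-1)^l Id \pmod{l^2}$, which is $\pm Id$ in $SL_2(\mathbb{Z}/N\mathbb{Z})$, proving that $(\overline{l},\ldots,\overline{l}) \in (\mathbb{Z}/N\mathbb{Z})^{2l}$ is a solution of $(E_N)$.

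I do not expect any real obstacle; the entire argument reduces to spotting the correct decomposition of $B$ so that all but two terms of the binomial expansion vanish modulo $l^2$. An alternative would be to invoke Cayley--Hamilton ($B^2 = \overline{l} B - Id$) to write $B^n = u_n B + v_n Id$ with recurrence $u_{n+1} = l u_n - u_{n-1}$, $u_1 = 1$, $u_2 = l$, then prove inductively modulo $l^2$ that $u_{2k} \equiv (-1)^{k+1} kl$ and $u_{2k+1} \equiv (-1)^k$, and finally evaluate at $k = l$ to get $u_{2l} \equiv 0$ and $v_{2l} \equiv (-1)^l$; but the decomposition-plus-binomial argument above is markedly shorter and avoids any induction.
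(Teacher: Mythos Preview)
Your proof is correct and follows essentially the same route as the paper: both reduce $B^2$ modulo $l^2$ to $-Id + lS$ (the paper by direct multiplication, you via the decomposition $B = S + lE$), and then both apply the binomial theorem to $(lS - Id)^l$, observing that the $k=0$ term gives $(-1)^l Id$ while all terms with $k \geq 1$ carry a factor of $l^2$ and hence vanish. The only difference is cosmetic in how the formula for $B^2$ is obtained.
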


\begin{proof}

On a 
\begin{eqnarray*}
M_{2l}(\overline{l},\ldots,\overline{l}) &=& (\begin{pmatrix}
   \overline{l}   & \overline{-1} \\
   \overline{1} & \overline{0}
\end{pmatrix}\begin{pmatrix}
   \overline{l}   & \overline{-1} \\
   \overline{1} & \overline{0}
\end{pmatrix})^{l} \\
                                         &=& \begin{pmatrix}
   \overline{l^{2}-1}   & \overline{-l} \\
   \overline{l} & \overline{-1}
\end{pmatrix}^{l} \\
                                         &=& \begin{pmatrix}
   \overline{-1}   & \overline{-l} \\
   \overline{l} & \overline{-1}
\end{pmatrix}^{l} \\
                                         &=& \overline{(-Id+lS)^{l}} \\
																				 &=& \overline{ \sum_{k=0}^{l} \binom{l}{k} (-1)^{l-k}(lS)^{k}}~{\rm(bin\hat{o}me~de~Newton)} \\
																				 &=& \overline{ (-1)^{l}\binom{l}{0}Id+(-1)^{l-1}\binom{l}{1}lS+l^{2} \sum_{k=2}^{l} \binom{l}{k} (-1)^{l-k} l^{k-2} S^{k}} \\
																				 &=& \overline{(-1)^{l}Id+(-1)^{l-1}l^{2}S}\\
																				 &=& \overline{(-1)^{l}Id}.\\
\end{eqnarray*}

\end{proof}

\noindent La question de l'irréductibilité de ces solutions est résolue dans la proposition suivante.

\begin{prop}

Soit $N=l^{2}$ avec $l \geq 2$. $(\overline{l},\ldots,\overline{l}) \in (\mathbb{Z}/N\mathbb{Z})^{2l}$ est irréductible si et seulement si $l=2$.

\end{prop}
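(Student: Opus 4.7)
Le sens direct est immédiat: pour $l=2$, le 4-uplet $(\overline{2},\overline{2},\overline{2},\overline{2})\in(\mathbb{Z}/4\mathbb{Z})^{4}$ ne contient ni $\overline{1}$ ni $\overline{-1}=\overline{3}$, donc par la proposition 3.5 ii) il est irréductible.

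Supposons maintenant $l\geq 3$. L'approche envisagée est de construire une décomposition explicite en somme de deux solutions de taille $\geq 3$. L'idée-clef est de classifier d'abord les solutions de $(E_{l^{2}})$ de la forme $(\overline{x},\overline{l},\ldots,\overline{l},\overline{y})$. En reprenant l'argument de la preuve de la proposition 3.10 (binôme de Newton modulo $l^{2}$, les puissances $l^{j}$ avec $j\geq 2$ s'annulant), on obtiendrait la formule
\[L^{2k}=(-1)^{k}(Id-klS),\]
où $L=\begin{pmatrix}\overline{l} & \overline{-1} \\ \overline{1} & \overline{0}\end{pmatrix}$. Un calcul direct du produit $\begin{pmatrix}\overline{y} & \overline{-1} \\ \overline{1} & \overline{0}\end{pmatrix}L^{2k}\begin{pmatrix}\overline{x} & \overline{-1} \\ \overline{1} & \overline{0}\end{pmatrix}$ montrerait alors que $(\overline{x},\overline{l},\ldots,\overline{l},\overline{y})$, avec $2k$ coefficients $\overline{l}$ centraux, est solution de $(E_{l^{2}})$ si et seulement si $\overline{x}=\overline{y}=\overline{-kl}$ (le cas d'un nombre impair de coefficients $\overline{l}$ centraux ne produisant aucune solution).

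Il reste ensuite à combiner deux telles solutions. Soient $(\overline{x_{i}},\overline{l},\ldots,\overline{l},\overline{y_{i}})$ deux solutions de tailles $2k_{i}+2$ pour $i=1,2$, vérifiant $\overline{x_{i}}=\overline{y_{i}}=\overline{-k_{i}l}$. La formule de la somme donne
\[(\overline{x_{1}},\overline{l},\ldots,\overline{l},\overline{y_{1}})\oplus(\overline{x_{2}},\overline{l},\ldots,\overline{l},\overline{y_{2}})=(\overline{x_{1}+y_{2}},\overline{l},\ldots,\overline{l},\overline{y_{1}+x_{2}},\overline{l},\ldots,\overline{l}),\]
de taille $2(k_{1}+k_{2})+2$. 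En imposant $k_{1}+k_{2}=l-1$, on a $\overline{x_{1}+y_{2}}=\overline{y_{1}+x_{2}}=\overline{-(l-1)l}=\overline{l}$ dans $\mathbb{Z}/l^{2}\mathbb{Z}$, et la somme est précisément $(\overline{l},\ldots,\overline{l})$ de taille $2l$. Puisque $l\geq 3$, le choix $k_{1}=1$ et $k_{2}=l-2$ est licite, les deux composantes étant de tailles respectives $4$ et $2l-2$, toutes deux $\geq 3$. On obtient ainsi la décomposition explicite
\[(\overline{l},\ldots,\overline{l})=(\overline{-l},\overline{l},\overline{l},\overline{-l})\oplus(\overline{2l},\overline{l},\ldots,\overline{l},\overline{2l}),\]
ce qui prouve la réductibilité.

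L'obstacle principal réside dans la classification des solutions de la forme $(\overline{x},\overline{l},\ldots,\overline{l},\overline{y})$, qui demande le calcul explicite de $L^{2k}$ modulo $l^{2}$ via le binôme, et la manipulation soigneuse des produits matriciels aux bords. Une fois cette classification établie, l'ajustement des paramètres $k_{1},k_{2}$ pour satisfaire simultanément les contraintes de taille et de somme se réduit à une vérification arithmétique immédiate.
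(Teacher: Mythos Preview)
Your proposal is correct and arrives at exactly the same explicit decomposition as the paper (with the two factors in the opposite order, which is immaterial here). The difference is one of economy. The paper's proof is a one-liner: it simply writes
\[(\overline{l},\ldots,\overline{l})=(\overline{2l},\overline{l},\ldots,\overline{l},\overline{2l}) \oplus (\overline{-l},\overline{l},\overline{l},\overline{-l}),\]
observes that $(\overline{-l},\overline{l},\overline{l},\overline{-l})$ is a solution by Proposition~3.3 (since $\overline{l}\cdot\overline{-l}=\overline{-l^{2}}=\overline{0}$ in $\mathbb{Z}/l^{2}\mathbb{Z}$), and notes that the other factor has size $2l-2\geq 4$; Proposition~3.7 then guarantees that this other factor is also a solution. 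No classification of solutions of the shape $(\overline{x},\overline{l},\ldots,\overline{l},\overline{y})$ is needed.

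Your route---computing $L^{2k}$ modulo $l^{2}$ by the binomial theorem, deducing $\overline{x}=\overline{y}=\overline{-kl}$, and then matching $k_{1}+k_{2}=l-1$---is sound and has the side benefit of yielding the full family of such solutions (this is in the spirit of Proposition~3.15 and Lemme~3.21 later in the paper). But for the present statement it is considerably more work than necessary: once you have spotted that $(\overline{-l},\overline{l},\overline{l},\overline{-l})$ falls under the $\overline{a}\overline{b}=\overline{0}$ case of the size-$4$ classification, the decomposition writes itself. (Minor point: your reference ``proposition~3.5~ii)'' for the $l=2$ case should be to Proposition~3.4~ii).)
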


\begin{proof}

Si $l=2$, alors la solution est $(\overline{2},\overline{2},\overline{2},\overline{2})$ et elle est irréductible puisqu'elle ne contient pas $\pm \overline{1}$. Si $l \geq 3$ alors la solution n'est pas irréductible car \[(\overline{l},\ldots,\overline{l})=(\overline{2l},\overline{l},\ldots,\overline{l},\overline{2l}) \oplus (\overline{-l},\overline{l},\overline{l},\overline{-l})\] et $(\overline{-l},\overline{l},\overline{l},\overline{-l})$ est solution (par la proposition 3.3) et $(\overline{2l},\overline{l},\ldots,\overline{l},\overline{2l})$ est de taille $2l-2 \geq 4$.

\end{proof}

\begin{rem}
{\rm La démonstration précédente montre également que si $N=l^{2}$ avec $l \geq 3$ alors le $(2l-2)$-uplet $(\overline{2l},\overline{l},\ldots,\overline{l},\overline{2l}) \in (\mathbb{Z}/N\mathbb{Z})^{2l-2}$ est solution de $(E_{N})$.
}
\end{rem}

On va maintenant donner une généralisation de la proposition 3.10. Avant cela on a besoin du résultat classique suivant :

\begin{lem}

Soient $n \in \mathbb{N}^{*}$ et $k \in [\![1;n]\!]$, $\frac{n}{{\rm pgcd}(n,k)}~{\rm divise}~\binom{n}{k}$.

\end{lem}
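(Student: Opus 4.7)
The plan is to rely on the classical binomial identity
\[
k \binom{n}{k} = n \binom{n-1}{k-1},
\]
which follows directly from writing both sides as $\frac{n!}{(k-1)!(n-k)!}$.

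Set $d = \mathrm{pgcd}(n,k)$ and write $n = d n'$, $k = d k'$ with $\mathrm{pgcd}(n',k') = 1$ (ce qui est la caractérisation standard du pgcd). Substituting in the identity above and simplifying by $d$ yields
\[
k' \binom{n}{k} = n' \binom{n-1}{k-1}.
\]
Thus $n'$ divides the product $k' \binom{n}{k}$. Since $\mathrm{pgcd}(n', k') = 1$, le lemme de Gauss entraîne que $n'$ divise $\binom{n}{k}$. Comme $n' = \frac{n}{\mathrm{pgcd}(n,k)}$, cela établit le résultat.

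The only step that requires any thought is the initial identity; once it is in place, the argument is a direct application of Gauss's lemma, so there is no real obstacle. Pour rester dans l'esprit du texte, je rédigerais la preuve en français, en deux ou trois lignes, en rappelant brièvement l'identité $k\binom{n}{k} = n\binom{n-1}{k-1}$ et en explicitant la simplification par $d$ avant d'invoquer le lemme de Gauss.
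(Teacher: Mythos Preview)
Your proof is correct and is essentially the same as the paper's: both start from the identity $k\binom{n}{k}=n\binom{n-1}{k-1}$, divide through by $d=\mathrm{pgcd}(n,k)$, and conclude via the lemme de Gauss using the coprimality of $n/d$ and $k/d$. The only cosmetic difference is that the paper first deduces $k/d \mid \binom{n-1}{k-1}$ and then reads off $n/d \mid \binom{n}{k}$, whereas you read the divisibility $n/d \mid \binom{n}{k}$ directly from the simplified identity.
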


\begin{proof}

\[{n \choose k}=\frac{n}{k}{n-1 \choose k-1}=\frac{\frac{n}{{\rm pgcd}(n,k)}}{\frac{k}{{\rm pgcd}(n,k)}}{n-1 \choose k-1}.\] Donc, comme ${n \choose k} \in \mathbb{N}^{*}$, on a $\frac{k}{{\rm pgcd}(n,k)}~{\rm divise}~\frac{n}{{\rm pgcd}(n,k)}{n-1 \choose k-1}$. Comme $\frac{k}{{\rm pgcd}(n,k)}$ et $\frac{n}{{\rm pgcd}(n,k)}$ sont premiers entre eux, on a, par le lemme de Gauss, $\frac{k}{{\rm pgcd}(n,k)}~{\rm divise}~{n-1 \choose k-1}$. Donc, $\frac{n}{{\rm pgcd}(n,k)}~{\rm divise}~{n \choose k}$.

\end{proof}

Avec ce lemme on peut démontrer le lemme suivant :

\begin{lem}

Soient $n \in \mathbb{N}^{*}$, $n \geq 2$, $l \in \mathbb{N}^{*}$, $l \geq 2$ et $j \in [\![1;n-1]\!]$, on a $l^{n-j}$ divise $\binom{l^{n-1}}{j}$.

\end{lem}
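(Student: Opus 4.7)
Le plan consiste à déduire l'énoncé du Lemme 3.13 appliqué à la situation $n \mapsto l^{n-1}$ et $k \mapsto j$: ceci fournit immédiatement que $\frac{l^{n-1}}{{\rm pgcd}(l^{n-1},j)}$ divise $\binom{l^{n-1}}{j}$. Il reste alors à vérifier que $l^{n-j}$ divise $\frac{l^{n-1}}{{\rm pgcd}(l^{n-1},j)}$, ce qui se reformule en la divisibilité ${\rm pgcd}(l^{n-1},j) \mid l^{j-1}$.

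Pour établir cette dernière, je procéderais nombre premier par nombre premier. Posons $d={\rm pgcd}(l^{n-1},j)$ et soit $p$ un premier divisant $d$; comme $d$ divise $l^{n-1}$, nécessairement $p \mid l$, donc $v_{p}(l) \geq 1$. Il s'agit alors de montrer que $v_{p}(d)=\min\bigl(v_{p}(j),(n-1)v_{p}(l)\bigr) \leq (j-1)v_{p}(l)=v_{p}(l^{j-1})$ pour tout tel $p$.

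L'étape cruciale consiste à éliminer le second terme du minimum: puisque $p \geq 2$ et $j \leq n-1 < 2^{n} \leq p^{n}$, on a $v_{p}(j) \leq n-1 \leq (n-1)v_{p}(l)$, ce qui force $v_{p}(d)=v_{p}(j)$. La conclusion résulte alors de la majoration élémentaire $v_{p}(j) \leq \log_{p}(j) \leq j-1 \leq (j-1)v_{p}(l)$, où $\log_{p}(j) \leq j-1$ provient de $p^{j-1} \geq 2^{j-1} \geq j$ pour tout $j \geq 1$.

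Le principal obstacle réside précisément dans cette élimination du terme $(n-1)v_{p}(l)$: si le minimum était atteint en celui-ci, l'inégalité souhaitée $(n-1)v_{p}(l) \leq (j-1)v_{p}(l)$ équivaudrait à $n \leq j$, en contradiction avec $j \leq n-1$. C'est donc la petitesse de $j$ devant $p^{n}$ (conséquence de $j \leq n-1$ et $p \geq 2$) qui rend l'argument valide; une fois ce point vérifié, tout le reste n'est qu'une comparaison routinière de valuations $p$-adiques.
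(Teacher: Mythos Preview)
Your argument is correct and follows essentially the same route as the paper: both invoke the preceding lemma (numbered 3.12 in the paper) to reduce to showing $\gcd(l^{n-1},j)\mid l^{j-1}$, and then verify this prime by prime via the inequality $p^{j-1}\geq j$. Your version is in fact slightly more streamlined, since by using the non-strict inequality $2^{j-1}\geq j$ valid for all $j\geq 1$ you avoid the separate treatment of the cases $j=1$ and $j=2$ that the paper carries out.
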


\begin{proof}

Si $j=1$ alors $\binom{l^{n-1}}{j}=l^{n-1}$ et donc le résultat est vrai. Si $j=2$ alors \[\binom{l^{n-1}}{j}=\frac{l^{n-1}(l^{n-1}-1)}{2}.\] Si $l$ est pair on a $\frac{l^{n-1}(l^{n-1}-1)}{2}=l^{n-2}\frac{l}{2}(l^{n-1}-1)$ et si $l$ est impair alors $(l^{n-1}-1)$ est pair et on a $\frac{l^{n-1}(l^{n-1}-1)}{2}=l^{n-1}\frac{l^{n-1}-1}{2}$. Dans tous les cas, $l^{n-2}$ divise $\binom{l^{n-1}}{j}$. On peut donc supposer $n \geq 4$ et $j \geq 3$.
\\
\\ Par le lemme précédent, $\frac{l^{n-1}}{{\rm pgcd}(l^{n-1},j)}~{\rm divise}~\binom{l^{n-1}}{j}$. Notons $l=p_{1}^{\alpha_{1}} \ldots p_{r}^{\alpha_{r}}$ la décomposition de $l$ en facteurs premiers. $\exists(\beta_{1},\ldots,\beta_{r}) \in \mathbb{N}^{r}$ tel que ${\rm pgcd}(l^{n-1},j)=p_{1}^{\beta_{1}} \ldots p_{r}^{\beta_{r}}$. 
\\
\\Montrons que $\forall i \in [\![1;r]\!]$, $\beta_{i} \leq \alpha_{i}(j-1)$. Supposons par l'absurde qu'il existe un entier $i$ dans $[\![1;r]\!]$ tel que $\beta_{i} > \alpha_{i}(j-1)$. Par récurrence, on montre que si $j \geq 3$ on a $p_{i}^{j-1} > j$. On a $p_{i}^{\alpha_{i}(j-1)} \geq p_{i}^{j-1} > j$ et donc ${\rm pgcd}(l^{n-1},j) >j$ ce qui est absurde.
\\
\\Ainsi, $\forall i \in [\![1;r]\!]$, $\beta_{i} \leq \alpha_{i}(j-1)$ et donc $l^{n-j}$ divise $\frac{l^{n-1}}{{\rm pgcd}(l^{n-1},j)}$. On en déduit que $l^{n-j}~{\rm divise}~\binom{l^{n-1}}{j}$.

\end{proof}

\begin{prop}

Si $N=l^{n}$ avec $l \geq 2$ et $n \geq 2$ alors $(\overline{l},\ldots,\overline{l}) \in (\mathbb{Z}/N\mathbb{Z})^{2l^{n-1}}$ est solution de $(E_{N})$. 

\end{prop}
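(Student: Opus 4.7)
L'approche consiste à généraliser la preuve de la proposition 3.10 en utilisant le lemme 3.13 pour contrôler les coefficients binomiaux modulo $l^n$.

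D'abord, je calcule $\begin{pmatrix} \overline{l} & \overline{-1} \\ \overline{1} & \overline{0}\end{pmatrix}^{2}=\begin{pmatrix}\overline{l^{2}-1} & \overline{-l} \\ \overline{l} & \overline{-1}\end{pmatrix}$, de sorte que $M_{2l^{n-1}}(\overline{l},\ldots,\overline{l})=\overline{M^{l^{n-1}}}$ où $M=\begin{pmatrix}l^{2}-1 & -l \\ l & -1\end{pmatrix}$. L'idée clé est d'écrire $M=-I+lB$ avec $B=\begin{pmatrix}l & -1 \\ 1 & 0\end{pmatrix}$. Comme $-I$ commute avec $B$, on peut appliquer la formule du binôme de Newton :
\[
M^{l^{n-1}}=\sum_{k=0}^{l^{n-1}}\binom{l^{n-1}}{k}(-1)^{l^{n-1}-k}\,l^{k}\,B^{k}.
\]

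Ensuite, je montre que tous les termes de cette somme pour $k \geq 1$ s'annulent modulo $l^{n}$. Pour $k \geq n$, c'est immédiat car $l^{n}$ divise $l^{k}$. Pour $1 \leq k \leq n-1$, le lemme 3.13 affirme que $l^{n-k}$ divise $\binom{l^{n-1}}{k}$, donc $l^{k}\binom{l^{n-1}}{k}$ est divisible par $l^{k}\cdot l^{n-k}=l^{n}$. Par conséquent, modulo $l^{n}$, il ne reste que le terme $k=0$ :
\[
\overline{M^{l^{n-1}}}=\overline{(-1)^{l^{n-1}}I}=\pm\overline{I}.
\]

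Ceci établit que $(\overline{l},\ldots,\overline{l})\in(\mathbb{Z}/N\mathbb{Z})^{2l^{n-1}}$ est solution de $(E_{N})$. La principale subtilité n'est pas calculatoire : elle consiste à repérer que l'écriture $M=-I+lB$ permet d'utiliser la commutativité avec $I$, puis à invoquer précisément le lemme 3.13 pour conclure que le poids en $l$ de chaque terme d'indice $1 \leq k \leq n-1$ atteint bien $n$. Le cas $n=2$ (proposition 3.10) correspond à la situation particulière où la somme se réduit aux deux termes $k=0$ et $k=1$, ce dernier étant annulé par $l^{2}$ seul, ce qui rend ce cas plus élémentaire que le cas général traité ici.
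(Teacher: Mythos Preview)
Your proof is correct and follows essentially the same approach as the paper's own proof: write the squared matrix as $-Id + lB$ with $B=\begin{pmatrix} l & -1 \\ 1 & 0\end{pmatrix}$, expand $(-Id+lB)^{l^{n-1}}$ by the binomial theorem, and use Lemma~3.13 to show that every term with $1\le k\le n-1$ vanishes modulo $l^n$ (the terms with $k\ge n$ being trivially zero), leaving only $(-1)^{l^{n-1}}Id$. The paper's argument is identical in structure and in its use of Lemma~3.13.
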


\begin{proof}

On a 
\begin{eqnarray*}
M_{2l^{n-1}}(\overline{l},\ldots,\overline{l}) &=& (\begin{pmatrix}
   \overline{l}   & \overline{-1} \\
   \overline{1} & \overline{0}
\end{pmatrix}\begin{pmatrix}
   \overline{l}   & \overline{-1} \\
   \overline{1} & \overline{0}
\end{pmatrix})^{l^{n-1}} \\
                                         &=& \begin{pmatrix}
   \overline{l^{2}-1}   & \overline{-l} \\
   \overline{l} & \overline{-1}
\end{pmatrix}^{l^{n-1}} \\
                                         &=& \overline{(-Id+l\begin{pmatrix}
   \overline{l}   & \overline{-1} \\
   \overline{1} & \overline{0}
\end{pmatrix})^{l^{n-1}}} \\
																				 &=& \overline{ \sum_{k=0}^{l^{n-1}} \binom{l^{n-1}}{k} l^{k}(-1)^{l^{n-1}-k}\begin{pmatrix}
   \overline{l}   & \overline{-1} \\
   \overline{1} & \overline{0}
\end{pmatrix}^{k}}~{\rm(bin\hat{o}me~de~Newton)} \\
																				 &=& \overline{ \sum_{k=0}^{n-1} \binom{l^{n-1}}{k} l^{k}(-1)^{l^{n-1}-k}\begin{pmatrix}
   \overline{l}   & \overline{-1} \\
   \overline{1} & \overline{0}
\end{pmatrix}^{k}}\\
																				 &=& \overline{ (-1)^{l^{n-1}}Id+\sum_{k=1}^{n-1} \binom{l^{n-1}}{k} l^{k}(-1)^{l^{n-1}-k}\begin{pmatrix}
   \overline{l}   & \overline{-1} \\
   \overline{1} & \overline{0}
\end{pmatrix}^{k}}\\
																				 &=& \overline{(-1)^{l^{n-1}}Id}~{\rm car}~l^{n-k}~{\rm divise}~\binom{l^{n-1}}{k}~{\rm par~le~lemme~3.13}.\\
\end{eqnarray*}

\end{proof}

Notons que d'après le théorème 2.6 (démontré dans la sous-section suivante), la solution ci-dessus est irréductible si $l=2$.
\\
\\On cherche maintenant à étudier l'irréductibilité des solutions monomiales minimales. On a besoin pour cela du résultat suivant sur l'expression de la matrice $M_{n}(a_{1},\ldots,a_{n})$ en terme de déterminant. On pose $K_{-1}=0$, $K_{0}=1$ et on note pour $i \geq 1$ \[K_i(a_{1},\ldots,a_{i})=
\left|
\begin{array}{cccccc}
a_1&1&&&\\[4pt]
1&a_{2}&1&&\\[4pt]
&\ddots&\ddots&\!\!\ddots&\\[4pt]
&&1&a_{i-1}&\!\!\!\!\!1\\[4pt]
&&&\!\!\!\!\!1&\!\!\!\!a_{i}
\end{array}
\right|.\] $K_{i}(a_{1},\ldots,a_{i})$ est le continuant de $a_{1},\ldots,a_{i}$. On dispose de l'égalité suivante (voir \cite{MO,O}) \[M_{n}(a_{1},\ldots,a_{n})=\begin{pmatrix}
    K_{n}(a_{1},\ldots,a_{n}) & -K_{n-1}(a_{2},\ldots,a_{n}) \\
    K_{n-1}(a_{1},\ldots,a_{n-1})  & -K_{n-2}(a_{2},\ldots,a_{n-1}) 
   \end{pmatrix}.\]

\noindent Ceci nous permet d'avoir le résultat préliminaire suivant :

\begin{prop}

Soient $n \in \mathbb{N}^{*}$, $n \geq 3$ et $(\overline{a},\overline{b},\overline{k}) \in (\mathbb{Z}/N\mathbb{Z})^{3}$. 
\\Si $(\overline{a},\overline{k},\overline{k},\ldots,\overline{k},\overline{b}) \in (\mathbb{Z}/N\mathbb{Z})^{n}$ est solution de $(E_{N})$ alors $\overline{a}=\overline{b}$ et on a \[\overline{a}(\overline{a}-\overline{k})=\overline{0}.\]
	
\end{prop}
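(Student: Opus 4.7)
The plan is to exploit the continuant formula for $M_n(\overline{a_{1}},\ldots,\overline{a_{n}})$ recalled just above the proposition. Let $P_j := K_j(\overline{k},\ldots,\overline{k})$ denote the continuant of $j$ copies of $\overline{k}$, with the conventions $P_{-1} = \overline{0}$ and $P_{0} = \overline{1}$, so that $P_j$ satisfies the linear recursion $P_j = \overline{k}\, P_{j-1} - P_{j-2}$. Using the standard continuant recursions (expansion on the first and on the last variable) together with the reflection symmetry $K_i(a_{1},\ldots,a_{i}) = K_i(a_{i},\ldots,a_{1})$, one obtains
\begin{align*}
K_{n-1}(\overline{a},\overline{k},\ldots,\overline{k}) &= \overline{a}\, P_{n-2} - P_{n-3},\\
K_{n-1}(\overline{k},\ldots,\overline{k},\overline{b}) &= \overline{b}\, P_{n-2} - P_{n-3},\\
K_{n-2}(\overline{k},\ldots,\overline{k}) &= P_{n-2}.
\end{align*}

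The first step translates the matrix equation $M_n = \pm Id$ into four scalar relations and deduces $\overline{a}=\overline{b}$. The bottom-right entry reads $-P_{n-2} = \pm\overline{1}$, which shows that $P_{n-2}$ is invertible in $\mathbb{Z}/N\mathbb{Z}$. The vanishing of the bottom-left and top-right entries gives $\overline{a}\, P_{n-2} = P_{n-3} = \overline{b}\, P_{n-2}$, so multiplying by $P_{n-2}^{-1}$ yields $\overline{a}=\overline{b}$.

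The second step uses the top-left entry to obtain $\overline{a}(\overline{a}-\overline{k})=\overline{0}$. With $\overline{b}=\overline{a}$, expanding on the last variable gives $K_n(\overline{a},\overline{k},\ldots,\overline{k},\overline{a}) = \overline{a}\, K_{n-1}(\overline{a},\overline{k},\ldots,\overline{k}) - K_{n-2}(\overline{a},\overline{k},\ldots,\overline{k})$; the first term is zero by the previous step, and the second expands (on the first variable) as $\overline{a}\, P_{n-3} - P_{n-4}$, so $K_n = -\overline{a}\, P_{n-3} + P_{n-4}$. This must equal $-P_{n-2}$ to match the bottom-right sign, and substituting $P_{n-4} = \overline{k}\, P_{n-3} - P_{n-2}$ from the recursion simplifies the identity to $(\overline{a}-\overline{k})\, P_{n-3} = \overline{0}$. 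Combined with $P_{n-3} = \overline{a}\, P_{n-2}$ (from the first step) and the invertibility of $P_{n-2}$, this is equivalent to $\overline{a}(\overline{a}-\overline{k}) = \overline{0}$.

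No real obstacle is anticipated; the argument is essentially continuant bookkeeping, and the chief points of attention are to use the two recursion directions consistently and to check that the boundary case $n=3$ is covered by the conventions $P_{-1} = \overline{0}$, $P_{0} = \overline{1}$ (so that the unified formulas remain valid and reduce to $\overline{a}\,\overline{k}=\overline{1}$ and $\overline{a}=\overline{k}$, compatible with the previously classified solutions of size $3$).
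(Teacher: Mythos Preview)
Your proposal is correct and follows essentially the same approach as the paper: both arguments translate $M_n=\pm Id$ into continuant identities, use the invertibility of $P_{n-2}=K_{n-2}(\overline{k},\ldots,\overline{k})=\mp\overline{1}$ to deduce $\overline{a}=\overline{b}$ from the off-diagonal entries, and then perform continuant bookkeeping to reach $\overline{a}(\overline{a}-\overline{k})=\overline{0}$.

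The only noteworthy difference is in the second step. The paper never uses the top-left entry $K_n$; instead it combines the recursion $-\overline{\epsilon}=P_{n-2}=\overline{k}P_{n-3}-P_{n-4}$ with the $SL_2$ determinant relation $-P_{n-2}P_{n-4}+P_{n-3}^{\,2}=\overline{1}$ (coming from $M_{n-2}(\overline{k},\ldots,\overline{k})\in SL_2$) to eliminate $P_{n-4}$ and obtain $\overline{a}(\overline{a}-\overline{k})=\overline{0}$ after substituting $P_{n-3}=-\overline{\epsilon}\,\overline{a}$. You instead exploit the equality of the two diagonal entries, $K_n=-P_{n-2}$, expand $K_n$ twice to reach $(\overline{a}-\overline{k})P_{n-3}=\overline{0}$, and then feed in $P_{n-3}=\overline{a}P_{n-2}$. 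Both routes are equally short; yours is perhaps a touch more direct since it uses all four matrix entries and avoids invoking the auxiliary $SL_2$ identity for $M_{n-2}$.
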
	
	
\begin{proof}

Comme $(\overline{a},\overline{k},\overline{k},\ldots,\overline{k},\overline{b})$ est solution de $(E_{N})$, $\exists \epsilon \in \{-1,1\}$ tel que \[\overline{\epsilon} Id=M_{n}(\overline{a},\overline{k},\overline{k},\ldots,\overline{k},\overline{b})=\begin{pmatrix}
   K_{n}(\overline{a},\overline{k},\ldots,\overline{k},\overline{b})   & -K_{n-1}(\overline{k},\ldots,\overline{k},\overline{b}) \\
   K_{n-1}(\overline{a},\overline{k},\ldots,\overline{k}) & -K_{n-2}(\overline{k},\ldots,\overline{k})
\end{pmatrix}.\] Donc, \[K_{n-1}(\overline{a},\overline{k},\ldots,\overline{k})=-K_{n-1}(\overline{k},\ldots,\overline{k},\overline{b})=\overline{0}~{\rm et}~K_{n-2}(\overline{k},\ldots,\overline{k})=-\overline{\epsilon}.\] Or, $K_{n-1}(\overline{a},\overline{k},\ldots,\overline{k})=\overline{a}K_{n-2}(\overline{k},\ldots,\overline{k})-K_{n-3}(\overline{k},\ldots,\overline{k})=\overline{-\epsilon a}-K_{n-3}(\overline{k},\ldots,\overline{k})$. Ainsi, comme $\overline{\epsilon}^{2}=\overline{1}$, on a \[\overline{a}=\overline{-\epsilon}K_{n-3}(\overline{k},\ldots,\overline{k}).\] De même, $K_{n-1}(\overline{k},\ldots,\overline{k},\overline{b})=\overline{b}K_{n-2}(\overline{k},\ldots,\overline{k})-K_{n-3}(\overline{k},\ldots,\overline{k})=\overline{-\epsilon b}-K_{n-3}(\overline{k},\ldots,\overline{k})$. Il en découle que, \[\overline{b}=\overline{-\epsilon}K_{n-3}(\overline{k},\ldots,\overline{k}).\] Donc, \[\overline{a}=\overline{b}.\] De plus, on a $\overline{-\epsilon}=K_{n-2}(\overline{k},\ldots,\overline{k})=\overline{k}K_{n-3}(\overline{k},\ldots,\overline{k})-K_{n-4}(\overline{k},\ldots,\overline{k})$ et \[M_{n-2}(\overline{k},\overline{k},\ldots,\overline{k})=\begin{pmatrix}
   K_{n-2}(\overline{k},\ldots,\overline{k})   & -K_{n-3}(\overline{k},\ldots,\overline{k}) \\
   K_{n-3}(\overline{k},\ldots,\overline{k}) & -K_{n-4}(\overline{k},\ldots,\overline{k})
\end{pmatrix} \in SL_{2}(\mathbb{Z}/N\mathbb{Z}).\] Ainsi, $-K_{n-2}(\overline{k},\ldots,\overline{k})K_{n-4}(\overline{k},\ldots,\overline{k})+K_{n-3}(\overline{k},\ldots,\overline{k})^{2}=\overline{1}$. Or, comme $K_{n-2}(\overline{k},\ldots,\overline{k})=\overline{-\epsilon}$, on a \[\overline{\epsilon}K_{n-4}(\overline{k},\ldots,\overline{k})+K_{n-3}(\overline{k},\ldots,\overline{k})^{2}=\overline{1}\] c'est-à-dire \[K_{n-4}(\overline{k},\ldots,\overline{k})=\overline{\epsilon}(\overline{1}-K_{n-3}(\overline{k},\ldots,\overline{k})^{2}).\] Donc, on a
\begin{eqnarray*}
\overline{-\epsilon} &=& \overline{k}K_{n-3}(\overline{k},\ldots,\overline{k})-K_{n-4}(\overline{k},\ldots,\overline{k}) \\
                     &=& \overline{k}K_{n-3}(\overline{k},\ldots,\overline{k})-\overline{\epsilon}(\overline{1}-K_{n-3}(\overline{k},\ldots,\overline{k})^{2}) \\ 
										 &=& \overline{k}K_{n-3}(\overline{k},\ldots,\overline{k})-\overline{\epsilon}+\overline{\epsilon}K_{n-3}(\overline{k},\ldots,\overline{k})^{2} \\
										 &=& -\overline{\epsilon k a}-\overline{\epsilon}+\overline{\epsilon a^{2}}. \\
\end{eqnarray*}

\noindent On en déduit, $\overline{0}=-\overline{\epsilon k a}+\overline{\epsilon a^{2}}=\overline{\epsilon}\overline{a}(\overline{a}-\overline{k})$ et donc $\overline{0}=\overline{a}(\overline{a}-\overline{k}).$

\end{proof}

\begin{rem} {\rm Il est possible que $\overline{a} \neq \overline{0}$ et $\overline{a} \neq \overline{k}$. Par exemple, si $N=9$, $(\overline{6},\overline{3},\overline{3},\overline{6})$ est solution de $(E_{9})$.}

\end{rem}

\begin{thm}

Si $N$ est premier alors toute solution monomiale minimale de $(E_{N})$ différente de $(\overline{0},\overline{0})$ est irréductible.

\end{thm}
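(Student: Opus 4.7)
Le plan est de raisonner par l'absurde, en combinant la proposition 3.15 avec le fait que $\mathbb{Z}/N\mathbb{Z}$ est un anneau intègre lorsque $N$ est premier. Soit $(\overline{k},\ldots,\overline{k}) \in (\mathbb{Z}/N\mathbb{Z})^{n}$ la solution $\overline{k}$-monomiale minimale considérée. Comme la proposition 3.2 assure que $(\overline{0},\overline{0})$ est l'unique solution de taille $2$, l'exclure de l'énoncé force $\overline{k} \neq \overline{0}$ et $n \geq 3$.

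On supposera cette solution réductible et on écrira $(\overline{k},\ldots,\overline{k}) \sim (\overline{a_{1}},\ldots,\overline{a_{m}}) \oplus (\overline{b_{1}},\ldots,\overline{b_{l}})$ avec $m, l \geq 3$ et $m+l-2=n$. Le $n$-uplet constant étant invariant par toute permutation circulaire et par renversement, la relation $\sim$ se réduit à une égalité. En déroulant la définition de $\oplus$, on obtient $\overline{a_{2}}=\cdots=\overline{a_{m-1}}=\overline{b_{2}}=\cdots=\overline{b_{l-1}}=\overline{k}$ ainsi que $\overline{a_{1}+b_{l}}=\overline{a_{m}+b_{1}}=\overline{k}$. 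En particulier, $(\overline{a_{1}},\overline{k},\ldots,\overline{k},\overline{a_{m}})$ est une solution de $(E_{N})$ de taille $m \geq 3$, ce qui permet de déclencher le mécanisme de la proposition 3.15.

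L'étape centrale consiste alors à appliquer cette proposition pour obtenir $\overline{a_{1}}=\overline{a_{m}}$ et $\overline{a_{1}}(\overline{a_{1}}-\overline{k})=\overline{0}$. C'est ici que la primalité de $N$ intervient de façon décisive : dans l'anneau intègre $\mathbb{Z}/N\mathbb{Z}$, la relation d'annulation du produit impose $\overline{a_{1}} \in \{\overline{0},\overline{k}\}$. Si $\overline{a_{1}}=\overline{k}$, alors $(\overline{a_{1}},\ldots,\overline{a_{m}})=(\overline{k},\ldots,\overline{k})$ est une solution $(m,\overline{k})$-monomiale avec $m \leq n-1$, ce qui contredit la minimalité de $n$. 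Si $\overline{a_{1}}=\overline{0}$, les égalités de bord entraînent $\overline{b_{1}}=\overline{b_{l}}=\overline{k}$, et $(\overline{b_{1}},\ldots,\overline{b_{l}})=(\overline{k},\ldots,\overline{k})$ devient une solution $(l,\overline{k})$-monomiale de taille $l \leq n-1$, d'où une nouvelle contradiction.

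L'obstacle principal est précisément l'étape quadratique $\overline{a_{1}}(\overline{a_{1}}-\overline{k})=\overline{0} \Rightarrow \overline{a_{1}}\in\{\overline{0},\overline{k}\}$ : sans l'hypothèse d'intégrité cette dichotomie s'effondre, comme l'illustre la remarque suivant la proposition 3.15 avec $N=9$ et $(\overline{6},\overline{3},\overline{3},\overline{6})$, qui fournit une troisième valeur possible pour $\overline{a_{1}}$. C'est précisément pour cette raison que l'hypothèse \og $N$ premier \fg{} ne peut pas être affaiblie ; le reste de la démonstration n'est qu'un travail de comptabilité sur la définition de $\oplus$ et sur la taille des facteurs.
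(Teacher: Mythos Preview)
Votre preuve est correcte et suit essentiellement la même ligne que celle du papier : contradiction via la proposition 3.15, intégrité de $\mathbb{Z}/N\mathbb{Z}$, puis minimalité. La seule différence est cosmétique : dans le cas $\overline{a_{1}}=\overline{0}$, le papier retire les deux extrémités nulles du facteur $(\overline{0},\overline{k},\ldots,\overline{k},\overline{0})$ pour obtenir une solution $\overline{k}$-monomiale de taille $m-2$, tandis que vous exploitez directement les égalités de bord pour conclure que l'autre facteur $(\overline{b_{1}},\ldots,\overline{b_{l}})$ est lui-même $\overline{k}$-monomial de taille $l\leq n-1$ ; les deux voies mènent à la même contradiction avec la minimalité.
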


\begin{proof}

Soient $\overline{k} \in \mathbb{Z}/N\mathbb{Z}$, $\overline{k} \neq \overline{0}$ et $n \in \mathbb{N}^{*}$ tels que $(\overline{k},\ldots,\overline{k}) \in (\mathbb{Z}/N\mathbb{Z})^{n}$ soit monomiale minimale. On suppose par l'absurde que cette solution peut s'écrire comme une somme de deux solutions non triviales.
\\
\\Il existe $(\overline{a_{1}},\ldots,\overline{a_{l}})$ et $(\overline{b_{1}},\ldots,\overline{b_{l'}})$ solutions de $(E_{N})$ différentes de $(\overline{0},\overline{0})$ avec $l+l'=n+2$ et $l,l' \geq 3$ telles que \[(\overline{k},\ldots,\overline{k})=(\overline{b_{1}+a_{l}},\overline{b_{2}},\ldots,\overline{b_{l'-1}},\overline{b_{l'}+a_{1}},\overline{a_{2}},\ldots,\overline{a_{l-1}}).\] On a donc $\overline{a_{2}}=\ldots=\overline{a_{l-1}}=\overline{k}$. Comme $(\overline{a_{1}},\ldots,\overline{a_{l}})$ est solution de $(E_{N})$, on a par la proposition précédente $\overline{a_{1}}=\overline{a_{l}}=\overline{a}$ et $\overline{0}=\overline{a}(\overline{a}-\overline{k})$.
\\
\\Puisque $N$ est premier, $\mathbb{Z}/N\mathbb{Z}$ est intègre et donc l'équation $\overline{0}=\overline{a}(\overline{a}-\overline{k})$ a pour solutions $\overline{a}=\overline{0}$ et $\overline{a}=\overline{k}$. Si $\overline{a}=\overline{0}$ alors \[(\overline{0},\overline{a_{2}},\ldots,\overline{a_{l-1}},\overline{0}) \sim (\overline{a_{2}},\ldots,\overline{a_{l-1}}) \oplus (\overline{0},\overline{0},\overline{0},\overline{0}).\] Par la proposition 3.7,  $(\overline{a_{2}},\ldots,\overline{a_{l-1}})=(\overline{k},\ldots,\overline{k}) \in (\mathbb{Z}/N\mathbb{Z})^{l-2}$  est encore solution de $(E_{N})$ ce qui contredit la minimalité de la solution. 
\\
\\Ainsi, $\overline{a}=\overline{k}$ et par minimalité de la solution on a $l \geq n$ ce qui implique $l' \leq 2$. Donc, $l'=2$ et $(\overline{b_{1}},\ldots,\overline{b_{l'}})=(\overline{0},\overline{0})$ ce qui est absurde.

\end{proof}

\begin{rem} {\rm Si $N$ n'est pas premier alors une solution monomiale minimale n'est pas forcément irréductible. Par exemple, si $N=9$, $(\overline{3},\overline{3},\overline{3},\overline{3},\overline{3},\overline{3})$ est monomiale minimale mais pas irréductible car $(\overline{3},\overline{3},\overline{3},\overline{3},\overline{3},\overline{3})= (\overline{6},\overline{3},\overline{3},\overline{6}) \oplus (\overline{6},\overline{3},\overline{3},\overline{6})$.}

\end{rem}

On peut améliorer la proposition 3.15 pour traiter le cas où $N=pq$ avec $p$ et $q$ deux nombres premiers distincts.

\begin{lem}

Soient $p$ et $q$ deux nombres premiers distincts et $N=pq$. Soient $n \in \mathbb{N}^{*}$, $n \geq 3$ et $(\overline{a},\overline{b}) \in (\mathbb{Z}/N\mathbb{Z})^{2}$. 
\\i)Si $(\overline{a},\overline{p},\overline{p},\ldots,\overline{p},\overline{b}) \in (\mathbb{Z}/N\mathbb{Z})^{n}$ est solution de $(E_{N})$ alors $\overline{a}=\overline{b}$ et $\overline{a} \in \{\overline{0},\overline{p}\}$.
\\
\\ii)Si $(\overline{a},\overline{q},\overline{q},\ldots,\overline{q},\overline{b}) \in (\mathbb{Z}/N\mathbb{Z})^{n}$ est solution de $(E_{N})$ alors $\overline{a}=\overline{b}$ et $\overline{a} \in \{\overline{0},\overline{q}\}$.
	
\end{lem}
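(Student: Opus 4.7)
The plan is to reduce both statements to Proposition~3.15 and then solve the resulting quadratic equation $\overline{a}(\overline{a}-\overline{k})=\overline{0}$ in $\mathbb{Z}/pq\mathbb{Z}$ by invoking the Chinese Remainder Theorem. Since parts i) and ii) are symmetric in $p$ and $q$, it suffices to carry out the argument for i).

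First I would apply Proposition~3.15 directly with $\overline{k}=\overline{p}$. This immediately yields $\overline{a}=\overline{b}$ together with the equation
\[
\overline{a}(\overline{a}-\overline{p})=\overline{0} \quad \text{in } \mathbb{Z}/pq\mathbb{Z}.
\]
Thus the whole question reduces to determining the zero set of $X(X-\overline{p})$ in $\mathbb{Z}/pq\mathbb{Z}$.

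Next I would use the isomorphism $\mathbb{Z}/pq\mathbb{Z}\cong \mathbb{Z}/p\mathbb{Z}\times\mathbb{Z}/q\mathbb{Z}$ provided by the Chinese Remainder Theorem. In the first factor, reducing $a(a-p)$ modulo $p$ gives $a^2\equiv 0 \pmod{p}$, and as $\mathbb{Z}/p\mathbb{Z}$ is a field this forces $a\equiv 0\pmod p$. In the second factor, reducing modulo $q$ gives $a(a-p)\equiv 0\pmod{q}$; since $\mathbb{Z}/q\mathbb{Z}$ is an integral domain, this forces $a\equiv 0 \pmod q$ or $a\equiv p \pmod q$.

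Recombining the two conditions via CRT gives exactly two lifts to $\mathbb{Z}/pq\mathbb{Z}$: the pair $(0,0)$ corresponding to $\overline{a}=\overline{0}$, and the pair $(0,p)$ corresponding to $\overline{a}=\overline{p}$ (note $p\equiv 0\pmod p$ and $p\equiv p\pmod q$, so $\overline{p}$ is indeed the unique CRT-lift of $(0,p)$). Hence $\overline{a}\in\{\overline{0},\overline{p}\}$, which is the stated conclusion. The argument for ii) is identical upon exchanging the roles of $p$ and $q$. No step presents a real obstacle here: the only subtlety is recognizing that $\overline{p}\cdot\overline{p}$ is not automatically $\overline{0}$ in $\mathbb{Z}/pq\mathbb{Z}$ (it vanishes only modulo $p$), so the CRT decomposition is essential in order to handle the two prime factors independently.
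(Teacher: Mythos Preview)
Your argument is correct. Both you and the paper begin identically by invoking Proposition~3.15 to obtain $\overline{a}=\overline{b}$ and $\overline{a}(\overline{a}-\overline{p})=\overline{0}$, but you then diverge in how the quadratic congruence is solved. The paper proceeds by first ruling out that $\overline{a}$ is invertible, then splitting into the two cases $a=ip$ (with $0\le i\le q-1$) and $a=jq$ (with $1\le j\le p-1$), and repeatedly applying Gauss's lemma to pin down $i\in\{0,1\}$ and to exclude the second case. Your route via the Chinese Remainder Theorem is more structural: the isomorphism $\mathbb{Z}/pq\mathbb{Z}\cong\mathbb{Z}/p\mathbb{Z}\times\mathbb{Z}/q\mathbb{Z}$ turns the problem into two equations over fields, where integrality disposes of each factor in one line. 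Your approach is shorter and conceptually cleaner, and it generalises more readily (e.g.\ to squarefree $N$); the paper's approach, by contrast, stays within elementary divisibility arguments and avoids invoking CRT explicitly.
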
	

\begin{proof}

Si $(\overline{a},\overline{p},\overline{p},\ldots,\overline{p},\overline{b}) \in (\mathbb{Z}/N\mathbb{Z})^{n}$ est solution de $(E_{N})$. Par la proposition 3.15, $\overline{a}=\overline{b}$ et $\overline{a}(\overline{a}-\overline{p})=\overline{0}$.
\\
\\Supposons par l'absurde que $\overline{a}$ est un élément inversible de $\mathbb{Z}/N\mathbb{Z}$. Dans ce cas, on a \[\overline{a}(\overline{a}-\overline{p})=\overline{0} \Longleftrightarrow (\overline{a}-\overline{p})=\overline{0} \Longleftrightarrow \overline{a}=\overline{p}.\] Or, $\overline{p}$ n'est pas inversible dans $\mathbb{Z}/N\mathbb{Z}$ ce qui est absurde. Donc, $\overline{a}$ n'est pas un élément inversible de $\mathbb{Z}/N\mathbb{Z}$.
\\
\\ Donc, soit il existe un entier $i$ dans $[\![0;q-1]\!]$ tel que $\overline{a}=\overline{ip}$ soit il existe un entier $j$ dans $[\![1;p-1]\!]$ tel que $\overline{a}=\overline{jq}$. 
\\
\\Si $\exists i \in [\![0;q-1]\!]$ tel que $\overline{a}=\overline{ip}$. On a \[\overline{a}(\overline{a}-\overline{p})=\overline{ip}(\overline{ip}-\overline{p})=\overline{ip^{2}}(\overline{i}-\overline{1})=\overline{0}.\] Donc, $pq$ divise $ip^{2}(i-1)$ et, en particulier, $q$ divise $ip^{2}(i-1)$. Comme $q$ et $p^{2}$ sont premiers entre eux, on a, par le lemme de Gauss, $q$ divise $i(i-1)$. Si $i \neq 0$ alors $q$ et $i$ sont premiers entre eux (puisque $i \in [\![0;q-1]\!]$), et donc, par le lemme de Gauss, $q$ divise $(i-1)$ ce qui implique $i=1$. Donc, $i=0$ ou $i=1$.
\\
\\Si $\exists j \in [\![1;p-1]\!]$ tel que $\overline{a}=\overline{jq}$. On a \[\overline{a}(\overline{a}-\overline{p})=\overline{jq}(\overline{jq}-\overline{p})=\overline{0}.\] Donc, $pq$ divise $jq(jq-p)$ et, en particulier, $p$ divise $jq(jq-p)$. Comme $p$ et $q$ sont premiers entre eux, on a, par le lemme de Gauss, $p$ divise $j(jq-p)$. Comme $p$ et $j$ sont premiers entre eux (puisque $j \in [\![1;p-1]\!]$), on a, par le lemme de Gauss, $p$ divise $(jq-p)$ et donc $p$ divise $jq$ ce qui est absurde.
\\
\\Donc, $\overline{a} \in \{\overline{0},\overline{p}\}$. On procède de façon analogue pour ii).

\end{proof}

\begin{prop}

Soient $p$ et $q$ deux nombres premiers distincts et $N=pq$. Toute solution $\overline{k}$-monomiale minimale de $(E_{N})$ avec $\overline{k} \in \{\overline{p},\overline{q}\}$ est irréductible.

\end{prop}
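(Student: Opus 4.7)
The plan is to follow the proof of Theorem 3.18 almost verbatim, with Lemma 3.19 replacing the integral-domain argument that was available when $N$ was prime. I will argue by contradiction.

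Fix $\overline{k} \in \{\overline{p}, \overline{q}\}$ and a $(n,\overline{k})$-monomial minimal solution $(\overline{k}, \ldots, \overline{k}) \in (\mathbb{Z}/N\mathbb{Z})^n$, and assume that it is reducible. By the reformulation of reducibility stated just after Proposition 3.7, one can write
$$(\overline{k},\ldots,\overline{k}) \sim (\overline{a_1},\ldots,\overline{a_l}) \oplus (\overline{b_1},\ldots,\overline{b_{l'}})$$
with $(\overline{b_1},\ldots,\overline{b_{l'}})$ a solution of $(E_N)$, $l, l' \geq 3$, and $l + l' = n+2$; Proposition 3.7 then forces $(\overline{a_1},\ldots,\overline{a_l})$ to also be a solution of $(E_N)$. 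Reading off the definition of $\oplus$, all inner coordinates must equal $\overline{k}$, so in particular $\overline{a_2} = \cdots = \overline{a_{l-1}} = \overline{k}$, and hence $(\overline{a_1}, \overline{k}, \ldots, \overline{k}, \overline{a_l})$ is a solution of $(E_N)$ of size $l \geq 3$.

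The decisive step is now to invoke Lemma 3.19 (part i) if $\overline{k} = \overline{p}$, part ii) if $\overline{k} = \overline{q}$), which yields $\overline{a_1} = \overline{a_l} =: \overline{a}$ with $\overline{a} \in \{\overline{0}, \overline{k}\}$. This is precisely the place where Theorem 3.18 used that $\mathbb{Z}/N\mathbb{Z}$ is an integral domain; Lemma 3.19 is the exact substitute tailored to $N = pq$ and $\overline{k} \in \{\overline{p}, \overline{q}\}$.

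The remaining dichotomy then mirrors Theorem 3.18. If $\overline{a} = \overline{0}$, one writes
$$(\overline{0}, \overline{k}, \ldots, \overline{k}, \overline{0}) \sim (\overline{k}, \ldots, \overline{k}) \oplus (\overline{0}, \overline{0}, \overline{0}, \overline{0})$$
and applies Proposition 3.7 to conclude that $(\overline{k}, \ldots, \overline{k}) \in (\mathbb{Z}/N\mathbb{Z})^{l-2}$ is a solution of $(E_N)$; since $l' \geq 3$ we have $l - 2 \leq n - 3 < n$, contradicting the minimality of $n$ (with Propositions 3.1 and 3.2 ruling out the degenerate small lengths $l-2 \in \{1,2\}$, using that $\overline{k} \notin \{\overline{0}, \overline{1}, \overline{-1}\}$ when $\overline{k} \in \{\overline{p}, \overline{q}\}$). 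If instead $\overline{a} = \overline{k}$, then $(\overline{a_1},\ldots,\overline{a_l})$ is itself a $(l,\overline{k})$-monomial solution with $l \leq n-1$, again contradicting minimality. Either way we reach the desired contradiction, so no obstacle beyond correctly applying Lemma 3.19 is anticipated.
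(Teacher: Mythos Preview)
Your proof is correct and follows essentially the same approach as the paper's: argue by contradiction, invoke the $N=pq$ lemma (Lemma~3.17 in the paper's numbering) to force $\overline{a}\in\{\overline{0},\overline{k}\}$, and contradict minimality in each case. The only cosmetic differences are that the paper phrases the $\overline{a}=\overline{k}$ case as ``minimality gives $l\geq n$, hence $l'\leq 2$, contradiction'' and does not single out the (unneeded) edge cases $l-2\in\{1,2\}$.
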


\begin{proof}

Soit $n \in \mathbb{N}^{*}$ tels que $(\overline{p},\ldots,\overline{p}) \in (\mathbb{Z}/N\mathbb{Z})^{n}$ soit monomiale minimale. On suppose par l'absurde que cette solution peut s'écrire comme une somme de deux solutions non triviales.
\\
\\Il existe $(\overline{a_{1}},\ldots,\overline{a_{l}})$ et $(\overline{b_{1}},\ldots,\overline{b_{l'}})$ solutions de $(E_{N})$ différentes de $(\overline{0},\overline{0})$ avec $l+l'=n+2$ et $l,l' \geq 3$ telles que \[(\overline{p},\ldots,\overline{p})=(\overline{b_{1}+a_{l}},\overline{b_{2}},\ldots,\overline{b_{l'-1}},\overline{b_{l'}+a_{1}},\overline{a_{2}},\ldots,\overline{a_{l-1}}).\] On a donc $\overline{a_{2}}=\ldots=\overline{a_{l-1}}=\overline{p}$. Comme $(\overline{a_{1}},\ldots,\overline{a_{l}})$ est solution de $(E_{N})$, on a par le lemme précédent $\overline{a_{1}}=\overline{a_{l}}=\overline{a}$ avec $\overline{a}=\overline{0}$ ou $\overline{a}=\overline{p}$. 
\\
\\Si $\overline{a}=\overline{0}$ alors \[(\overline{0},\overline{a_{2}},\ldots,\overline{a_{l-1}},\overline{0}) \sim (\overline{a_{2}},\ldots,\overline{a_{l-1}}) \oplus (\overline{0},\overline{0},\overline{0},\overline{0}).\] Par la proposition 3.7,  $(\overline{a_{2}},\ldots,\overline{a_{l-1}})=(\overline{p},\ldots,\overline{p}) \in (\mathbb{Z}/N\mathbb{Z})^{l-2}$  est encore solution de $(E_{N})$ ce qui contredit la minimalité de la solution. 
\\
\\Donc, $\overline{a}=\overline{p}$ et par minimalité de la solution on a $l \geq n$ ce qui implique $l' \leq 2$. Donc, $l'=2$ et $(\overline{b_{1}},\ldots,\overline{b_{l'}})=(\overline{0},\overline{0})$ ce qui est absurde.
\\
\\On procède de la même façon dans le cas d'une solution $\overline{q}$-monomiale minimale.

\end{proof}

\begin{rem} {\rm Si $N=pq$ et $\overline{k} \notin \{\overline{p},\overline{q}\}$ alors une solution $\overline{k}$-monomiale minimale de $(E_{N})$ n'est pas forcément irréductible. Par exemple, si $N=10=2 \times 5$, une solution $\overline{3}$-monomiale minimale (qui est de taille 15) n'est pas irréductible car on peut l'écrire comme une somme à l'aide de la solution $(\overline{8},\overline{3},\overline{3},\overline{3},\overline{8})$.
}

\end{rem}

\subsubsection{Démonstration du théorème 2.6}

\leavevmode\par
\leavevmode\par \noindent Dans le cas des solutions $\overline{2}$-monomiales on peut améliorer les résultats précédents.

\begin{lem}

Soit $n \in \mathbb{N}^{*}$ alors $M_{n}(2,\ldots,2)=\begin{pmatrix}
   n+1   & -n \\
   n & -n+1
\end{pmatrix}$.

\end{lem}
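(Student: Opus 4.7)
The plan is to prove the lemma by a straightforward induction on $n$, using the recursive structure of $M_n$.

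For the base case $n=1$, direct computation gives
\[
M_{1}(2)=\begin{pmatrix} 2 & -1 \\ 1 & 0 \end{pmatrix},
\]
which matches the claimed formula with $n=1$. For the inductive step, assume the formula holds at rank $n$. From the definition of $M_{n}$, multiplying on the left by the block corresponding to $a_{n+1}=2$ yields
\[
M_{n+1}(2,\ldots,2)=\begin{pmatrix} 2 & -1 \\ 1 & 0 \end{pmatrix} M_{n}(2,\ldots,2).
\]
Substituting the induction hypothesis gives
\[
\begin{pmatrix} 2 & -1 \\ 1 & 0 \end{pmatrix}\begin{pmatrix} n+1 & -n \\ n & -n+1 \end{pmatrix}=\begin{pmatrix} 2(n+1)-n & -2n+(n-1) \\ n+1 & -n \end{pmatrix}=\begin{pmatrix} n+2 & -(n+1) \\ n+1 & -n \end{pmatrix},
\]
which is exactly the desired formula at rank $n+1$. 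This closes the induction.

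There is no real obstacle here: the computation is a two-line matrix product, and the pattern of coefficients $(n+1,-n,n,-n+1)$ is preserved under multiplication by $\begin{pmatrix} 2 & -1 \\ 1 & 0 \end{pmatrix}$ precisely because of the simple linear recurrence satisfied by the continuant $K_i(2,\ldots,2)$ (namely $K_{i}=2K_{i-1}-K_{i-2}$, giving $K_i(2,\ldots,2)=i+1$). One could alternatively deduce the lemma from the continuant formula for $M_n$ recalled just before Proposition~3.15, but the direct induction is more economical here.
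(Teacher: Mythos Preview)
Your proof is correct and follows essentially the same approach as the paper: both proceed by induction on $n$ via a single matrix multiplication, the only cosmetic difference being that you multiply on the left by $\begin{pmatrix} 2 & -1 \\ 1 & 0 \end{pmatrix}$ while the paper multiplies on the right (immaterial here since all factors coincide).
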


\begin{proof}

On raisonne par récurrence sur $n$.
\\
\\Si $n=1$ alors le résultat est vrai. On suppose qu'il existe $n \in \mathbb{N}^{*}$ tel que $M_{n}(2,\ldots,2)=\begin{pmatrix}
   n+1   & -n \\
   n & -n+1
\end{pmatrix}$. On a 

\begin{eqnarray*}
M_{n+1}(2,\ldots,2) &=& M_{n}(2,\ldots,2) \begin{pmatrix}
   2   & -1 \\
   1 & 0
\end{pmatrix}\\
                    &=&\begin{pmatrix}
   n+1   & -n \\
   n & -n+1
\end{pmatrix}\begin{pmatrix}
   2   & -1 \\
   1 & 0
\end{pmatrix}\\
                    &=&\begin{pmatrix}
   2n+2-n   & -n-1 \\
   2n-n+1 & -n
\end{pmatrix}\\
                    &=&\begin{pmatrix}
   (n+1)+1   & -(n+1) \\
    n+1 &  -(n+1)+1
\end{pmatrix}.\\
\end{eqnarray*} La formule est vraie pour $n+1$ et donc par récurrence elle est vraie pour tout $n$.

\end{proof}

De ce calcul, on déduit l'existence d'une solution particulière pour $N$ quelconque.

\begin{cor}

$(\overline{2},\ldots,\overline{2}) \in (\mathbb{Z}/N\mathbb{Z})^{N}$ est solution de $(E_{N})$.

\end{cor}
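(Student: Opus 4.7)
The proof is a direct specialization of Lemma 3.19 followed by reduction modulo $N$. The plan is to compute $M_N(2,\ldots,2)$ in $SL_2(\mathbb{Z})$ using the lemma, then project the result into $SL_2(\mathbb{Z}/N\mathbb{Z})$.

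More precisely, I would apply Lemma 3.19 with $n = N$, which yields
\[
M_N(2,\ldots,2) = \begin{pmatrix} N+1 & -N \\ N & -N+1 \end{pmatrix}.
\]
Reducing each entry modulo $N$, we have $\overline{N+1} = \overline{1}$, $\overline{-N+1} = \overline{1}$, and $\overline{\pm N} = \overline{0}$, so
\[
M_N(\overline{2},\ldots,\overline{2}) = \begin{pmatrix} \overline{1} & \overline{0} \\ \overline{0} & \overline{1} \end{pmatrix} = Id.
\]
Since $Id = +Id$, this shows that $(\overline{2},\ldots,\overline{2}) \in (\mathbb{Z}/N\mathbb{Z})^N$ is a solution of $(E_N)$, which is exactly what we wanted.

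There is no real obstacle: the whole point of having stated Lemma 3.19 just before the corollary is that the integer formula for $M_n(2,\ldots,2)$ immediately gives the identity modulo $N$ precisely when $n = N$. The only thing worth checking is that the reduction morphism $SL_2(\mathbb{Z}) \to SL_2(\mathbb{Z}/N\mathbb{Z})$ is compatible with the product $M_n(a_1,\ldots,a_n)$, which is clear since each factor $\begin{pmatrix} a_i & -1 \\ 1 & 0 \end{pmatrix}$ reduces entrywise.
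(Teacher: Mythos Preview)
Your proof is correct and follows exactly the paper's own argument: apply Lemma~3.19 with $n=N$ to obtain $M_N(2,\ldots,2)=\begin{pmatrix} N+1 & -N \\ N & -N+1 \end{pmatrix}$, then reduce modulo $N$ to get $Id$. The additional remark about compatibility of the reduction map is fine but not needed.
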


\begin{proof}

$M_{N}(\overline{2},\ldots,\overline{2})=\begin{pmatrix}
   \overline{N+1}   & \overline{-N} \\
   \overline{N} & \overline{-N+1}
\end{pmatrix}=Id.$

\end{proof}

Pour montrer l'irréductibilité de cette solution, on va utiliser une version améliorée de la proposition 3.15 utilisant l'hypothèse $\overline{k}=\overline{2}$.

\begin{lem}

Soient $n \in \mathbb{N}^{*}$, $n \geq 3$ et $(\overline{a},\overline{b}) \in (\mathbb{Z}/N\mathbb{Z})^{2}$. 
\\$(\overline{a},\overline{2},\overline{2},\ldots,\overline{2},\overline{b}) \in (\mathbb{Z}/N\mathbb{Z})^{n}$ est solution de $(E_{N})$ si et seulement si $\overline{a}=\overline{b}=\overline{2}$ et $n \equiv 0 [N]$ ou $\overline{a}=\overline{b}=\overline{0}$ et $n \equiv 2 [N]$.

\end{lem}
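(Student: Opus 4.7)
The plan is to reduce everything to the explicit matrix computation given by Lemma 3.19 and then read off the constraints entry by entry.

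First, I would bracket the product so that the middle block of $\overline{2}$'s can be evaluated in closed form:
\[
M_{n}(\overline{a},\overline{2},\ldots,\overline{2},\overline{b})
= \begin{pmatrix} \overline{b} & \overline{-1}\\ \overline{1} & \overline{0}\end{pmatrix}
  M_{n-2}(\overline{2},\ldots,\overline{2})
  \begin{pmatrix} \overline{a} & \overline{-1}\\ \overline{1} & \overline{0}\end{pmatrix},
\]
and apply Lemma 3.19 to the middle factor, yielding a matrix whose entries are explicit $\mathbb{Z}$-linear combinations of $\overline{a}$, $\overline{b}$ and $n$.

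Next, for the direct implication, assume the result is $\overline{\epsilon}Id$ with $\epsilon \in \{\pm 1\}$. The lower-right entry, which is $\overline{-(n-1)}$, immediately gives $n \equiv 0 \pmod N$ (case $\epsilon = 1$) or $n \equiv 2 \pmod N$ (case $\epsilon = -1$). Substituting back into the lower-left entry $(n-1)\overline{a}-(n-2)$ forces $\overline{a} = \overline{2}$ in the first case and $\overline{a} = \overline{0}$ in the second; the upper-right entry $-(n-1)\overline{b}+(n-2)$ does the same for $\overline{b}$, and in both cases $\overline{a}=\overline{b}$. (Alternatively, Proposition 3.15 already delivers $\overline{a} = \overline{b}$ and $\overline{a}(\overline{a}-\overline{2}) = \overline{0}$, but the entry-by-entry inspection is needed anyway to pin down the congruence on $n$, so I would keep the single computation.)

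For the converse, I would substitute the two pairs $(\overline{a},\overline{b})=(\overline{2},\overline{2})$ with $n\equiv 0\,[N]$ and $(\overline{a},\overline{b})=(\overline{0},\overline{0})$ with $n\equiv 2\,[N]$ into the product above. In the first case, the whole tuple is $(\overline{2},\ldots,\overline{2})$ of length $n$, and Lemma 3.19 directly gives $Id$. In the second case, a short computation with the two factors $\begin{pmatrix}\overline{0}&\overline{-1}\\\overline{1}&\overline{0}\end{pmatrix}$ sandwiching $M_{n-2}(\overline{2},\ldots,\overline{2})$ produces $-Id$ when $n\equiv 2\,[N]$.

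There is no real obstacle: everything is pinned down by Lemma 3.19 together with four scalar congruences in $\mathbb{Z}/N\mathbb{Z}$. The only point demanding a bit of care is keeping track of signs and of the shift by $2$ in the length (the middle block has $n-2$ terms), so I would state the matrix product once, in full, and then extract the four entry equations in a short displayed system to avoid confusion.
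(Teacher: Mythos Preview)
Your proposal is correct and follows essentially the same route as the paper: bracket the product as $\begin{pmatrix}\overline{b}&\overline{-1}\\\overline{1}&\overline{0}\end{pmatrix} M_{n-2}(\overline{2},\ldots,\overline{2})\begin{pmatrix}\overline{a}&\overline{-1}\\\overline{1}&\overline{0}\end{pmatrix}$, apply Lemma~3.19, and read off the entries. The only cosmetic differences are that the paper first invokes Proposition~3.15 to set $\overline{a}=\overline{b}$ before computing (whereas you extract both $\overline{a}$ and $\overline{b}$ directly from the off-diagonal entries), and for the converse the paper appeals to Corollary~3.20 together with Proposition~3.5 rather than re-substituting into the explicit product; both variants are fine.
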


\begin{proof}

Supposons que $(\overline{a},\overline{2},\overline{2},\ldots,\overline{2},\overline{b})$ est solution de $(E_{N})$.
\\
\\Par la proposition 3.15, $\overline{a}=\overline{b}$. On a 

\begin{eqnarray*}
M_{n}(\overline{a},\overline{2},\ldots,\overline{2},\overline{a}) &=& \begin{pmatrix}
   \overline{a}   & \overline{-1} \\
    \overline{1} &  \overline{0}
\end{pmatrix}M_{n-2}(\overline{2},\ldots,\overline{2}) \begin{pmatrix}
   \overline{a}   & \overline{-1} \\
    \overline{1} &  \overline{0}
\end{pmatrix} \\
                                                                  &=& \begin{pmatrix}
   \overline{a}   & \overline{-1} \\
    \overline{1} &  \overline{0}
\end{pmatrix}\begin{pmatrix}
   \overline{n-1}   & \overline{-n+2} \\
   \overline{n-2} & \overline{-n+3}
\end{pmatrix} \begin{pmatrix}
   \overline{a}   & \overline{-1} \\
    \overline{1} &  \overline{0}
\end{pmatrix} \\
                                                                  &=& \begin{pmatrix}
   \overline{an-a-n+2}   & \overline{-an+2a+n-3} \\
   \overline{n-1} & \overline{-n+2}
\end{pmatrix} \begin{pmatrix}
   \overline{a}   & \overline{-1} \\
    \overline{1} &  \overline{0}
\end{pmatrix} \\
                                                                  &=& \begin{pmatrix}
   \overline{x}   & \overline{y} \\
    \overline{an-a-n+2} &  \overline{1-n}
\end{pmatrix}. \\
\end{eqnarray*}  

On a deux cas: 
\begin{itemize}
\item $\overline{1-n}=\overline{1}$. Dans ce cas, $\overline{n}=\overline{0}$ c'est-à-dire $n \equiv 0 [N]$ et $\overline{0}=\overline{an-a-n+2}=\overline{-a+2}$ et donc $\overline{a}=\overline{2}$.
\item $\overline{1-n}=\overline{-1}$. Dans ce cas, $\overline{n}=\overline{2}$ c'est-à-dire $n \equiv 2 [N]$ et $\overline{0}=\overline{an-a-n+2}=\overline{a}$.
\\
\end{itemize}

Supposons que $\overline{a}=\overline{b}=\overline{2}$ et $n \equiv 0 [N]$ ou $\overline{a}=\overline{b}=\overline{0}$ et $n \equiv 2 [N]$. D'après le corollaire précédent, $(\overline{2},\ldots,\overline{2}) \in (\mathbb{Z}/N\mathbb{Z})^{N}$ est solution de $(E_{N})$ et $(\overline{0},\overline{0}) \in (\mathbb{Z}/N\mathbb{Z})^{2}$ est solution de $(E_{N})$. Donc, $(\overline{a},\overline{2},\overline{2},\ldots,\overline{2},\overline{b})$ est solution de $(E_{N})$ (proposition 3.5).

\end{proof} 

Ce théorème montre en particulier que $(\overline{2},\ldots,\overline{2}) \in (\mathbb{Z}/N\mathbb{Z})^{N}$ est une solution monomiale minimale de $(E_{N})$.
\\
\\On peut maintenant démontrer le théorème 2.6.

\begin{proof}[Démonstration du théorème 2.6.]

Si $N=3$ alors le résultat est vrai (proposition 3.4) et on suppose maintenant $N \geq 4$. On suppose par l'absurde que cette solution peut s'écrire comme une somme de deux solutions non triviales.
\\
\\Il existe $(\overline{a_{1}},\ldots,\overline{a_{l}})$ et $(\overline{b_{1}},\ldots,\overline{b_{l'}})$ solutions de $(E_{N})$ différentes de $(\overline{0},\overline{0})$ avec $l+l'=N+2$ et $l,l' \geq 3$ telles que \[(\overline{2},\ldots,\overline{2})=(\overline{b_{1}+a_{l}},\overline{b_{2}},\ldots,\overline{b_{l'-1}},\overline{b_{l'}+a_{1}},\overline{a_{2}},\ldots,\overline{a_{l-1}}).\] On a donc $\overline{a_{2}}=\ldots=\overline{a_{l-1}}=\overline{2}$. Comme $(\overline{a_{1}},\ldots,\overline{a_{l}})$ est solution de $(E_{N})$, on a par le lemme précédent $l \equiv 2 [N]$ ou $l \equiv 0 [N]$. Comme $l \geq 3$ on a nécessairement $l \geq N$ et donc $l' \leq 2$. Donc, $l'=2$ et $(\overline{b_{1}},\ldots,\overline{b_{l'}})=(\overline{0},\overline{0})$ ce qui est absurde.
 
\end{proof}

On en déduit le corollaire suivant qui donne une autre solution irréductible dans le cas général.

\begin{cor}

Si $N \geq 3$, $(\overline{N-2},\ldots,\overline{N-2}) \in (\mathbb{Z}/N\mathbb{Z})^{N}$ est une solution irréductible de $(E_{N})$.

\end{cor}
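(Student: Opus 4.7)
Le plan est de ramener le corollaire au théorème 2.6 via la symétrie $\overline{a}\mapsto\overline{-a}$, en observant que dans $\mathbb{Z}/N\mathbb{Z}$ on a $\overline{N-2}=\overline{-2}$. Notons $\sigma$ l'application qui à un uplet $(\overline{a_{1}},\ldots,\overline{a_{n}})$ associe $(\overline{-a_{1}},\ldots,\overline{-a_{n}})$. Par le corollaire 3.19, $(\overline{2},\ldots,\overline{2})\in(\mathbb{Z}/N\mathbb{Z})^{N}$ est solution de $(E_{N})$, donc par la proposition 3.6 ii), $\sigma((\overline{2},\ldots,\overline{2}))=(\overline{N-2},\ldots,\overline{N-2})$ l'est aussi.

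Pour l'irréductibilité, l'idée est de montrer que $\sigma$ préserve à la fois la relation $\sim$ et la somme $\oplus$, puis d'invoquer le théorème 2.6. Concrètement, $\sigma$ commute avec les permutations circulaires et le retournement, donc préserve $\sim$. D'autre part, par la définition de $\oplus$ et la linéarité de $x\mapsto -x$ sur $\mathbb{Z}/N\mathbb{Z}$, on vérifie directement
\[\sigma\bigl((\overline{a_{1}},\ldots,\overline{a_{n}}) \oplus (\overline{b_{1}},\ldots,\overline{b_{m}})\bigr) = \sigma(\overline{a_{1}},\ldots,\overline{a_{n}}) \oplus \sigma(\overline{b_{1}},\ldots,\overline{b_{m}}),\]
car chaque coordonnée du membre de gauche est de la forme $\overline{-(a_{i}+b_{j})}=\overline{-a_{i}}+\overline{-b_{j}}$ ou $\overline{-a_{i}}$ ou $\overline{-b_{j}}$.

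On conclut ensuite par l'absurde. Si $(\overline{N-2},\ldots,\overline{N-2})$ était réductible, il existerait deux solutions $(\overline{a_{1}},\ldots,\overline{a_{m}})$ et $(\overline{b_{1}},\ldots,\overline{b_{l}})$ de $(E_{N})$ avec $m,l\geq 3$ telles que $(\overline{N-2},\ldots,\overline{N-2})\sim (\overline{a_{1}},\ldots,\overline{a_{m}})\oplus(\overline{b_{1}},\ldots,\overline{b_{l}})$. En appliquant $\sigma$ (qui est une involution) et en utilisant à nouveau la proposition 3.6 ii) pour voir que $\sigma(\overline{a_{1}},\ldots,\overline{a_{m}})$ et $\sigma(\overline{b_{1}},\ldots,\overline{b_{l}})$ sont encore solutions de $(E_{N})$, on obtient
\[(\overline{2},\ldots,\overline{2}) \sim \sigma(\overline{a_{1}},\ldots,\overline{a_{m}}) \oplus \sigma(\overline{b_{1}},\ldots,\overline{b_{l}}),\]
ce qui contredit le théorème 2.6.

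Il n'y a pas réellement d'étape délicate ici : tout repose sur le fait, essentiellement formel, que la négation composante par composante est compatible avec $\oplus$ et $\sim$. Le seul point à énoncer avec un peu de soin est cette compatibilité de $\sigma$ avec l'opération $\oplus$, qui se lit immédiatement sur la définition de la somme.
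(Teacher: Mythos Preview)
Your proof is correct and takes a slightly different route from the paper. The paper essentially reruns the argument of Theorem~2.6: it writes out a hypothetical decomposition of $(\overline{N-2},\ldots,\overline{N-2})$, negates the factor $(\overline{a_1},\ldots,\overline{a_l})$ to obtain a solution with inner entries $\overline{2}$, and then applies Lemma~3.21 directly to force $l\geq N$. You instead isolate the structural fact that the involution $\sigma:(\overline{a_i})\mapsto(\overline{-a_i})$ is compatible with both $\sim$ and $\oplus$, hence transports reducibility; this lets you invoke Theorem~2.6 as a black box rather than replaying its mechanism. Your approach is cleaner and yields the reusable lemma that $\sigma$ preserves irreducibility in general, at the modest cost of having to verify the compatibilities explicitly; the paper's approach avoids stating that lemma but duplicates the endgame of Theorem~2.6. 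One minor remark: the result you cite as ``corollaire~3.19'' is Corollary~3.20 in the paper's numbering.
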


\begin{proof}

Par la proposition 3.6, $(\overline{N-2},\ldots,\overline{N-2}) \in (\mathbb{Z}/N\mathbb{Z})^{N}$ est une solution de $(E_{N})$. On s'intéresse maintenant à l'irréductibilité de la solution.
\\
\\Si $N=3$ alors le résultat est vrai (proposition 3.4) et on suppose maintenant $N \geq 4$. On suppose par l'absurde que cette solution peut s'écrire comme une somme de deux solutions non triviales.
\\
\\Il existe $(\overline{a_{1}},\ldots,\overline{a_{l}})$ et $(\overline{b_{1}},\ldots,\overline{b_{l'}})$ solutions de $(E_{N})$ différentes de $(\overline{0},\overline{0})$ avec $l+l'=N+2$ et $l,l' \geq 3$ telles que \[(\overline{N-2},\ldots,\overline{N-2})=(\overline{b_{1}+a_{l}},\overline{b_{2}},\ldots,\overline{b_{l'-1}},\overline{b_{l'}+a_{1}},\overline{a_{2}},\ldots,\overline{a_{l-1}}).\] On a donc $\overline{a_{2}}=\ldots=\overline{a_{l-1}}=\overline{N-2}$.
\\
\\ De plus, $(\overline{-a_{1}},\ldots,\overline{-a_{l}})$ est solution de $(E_{N})$ et $\overline{-a_{2}}=\ldots=\overline{-a_{l-1}}=\overline{2}$. Donc par le lemme 3.21, $l \equiv 2 [N]$ ou $l \equiv 0 [N]$. Comme $l \geq 3$, on a nécessairement $l \geq N$ et donc $l' \leq 2$. Donc, $l'=2$ et $(\overline{b_{1}},\ldots,\overline{b_{l'}})=(\overline{0},\overline{0})$ ce qui est absurde.
 
\end{proof}

\section{Solution de $(E_{N})$ pour $N \in [\![2;7]\!]$}

\subsection{Cas où $N=2$}

\leavevmode\par
\leavevmode\par On commence par le cas $N=2$ étudié dans \cite{M}.

\begin{thm}[voir \cite{M}, Proposition 5.3]

Les solutions irréductibles de $(E_{2})$ sont $(\overline{1},\overline{1},\overline{1})$ et $(\overline{0},\overline{0},\overline{0},\overline{0})$.

\end{thm}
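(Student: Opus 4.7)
L'approche consiste à combiner les résultats structurels de la section~3 avec le fait que $\mathbb{Z}/2\mathbb{Z} = \{\overline{0}, \overline{1}\}$ ne contient que deux éléments et vérifie $\overline{-1} = \overline{1}$, ce qui ramène la recherche des solutions irréductibles à une analyse finie de cas.

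Je commencerais par borner la taille des solutions irréductibles. Pour $n \geq 5$, un $n$-uplet d'éléments de $\mathbb{Z}/2\mathbb{Z}$ contient nécessairement $\overline{0}$ ou $\overline{1}$ (les deux seules valeurs disponibles), donc par la proposition~3.8 toute solution de taille au moins $5$ est réductible. Les tailles $n=1$ et $n=2$ ne fournissent aucune solution irréductible puisque les propositions~3.1 et~3.2 donnent l'unique solution $(\overline{0},\overline{0})$, exclue par convention.

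Il reste à traiter les tailles $3$ et $4$. Pour $n=3$, la proposition~3.4 et l'égalité $\overline{-1}=\overline{1}$ donnent l'unique solution $(\overline{1},\overline{1},\overline{1})$, qui est irréductible par la proposition~3.7~i). Pour $n=4$, la proposition~3.3 et la relation $\overline{2}=\overline{0}$ dans $\mathbb{Z}/2\mathbb{Z}$ permettent d'énumérer explicitement trois solutions: $(\overline{0},\overline{0},\overline{0},\overline{0})$, $(\overline{0},\overline{1},\overline{0},\overline{1})$ et $(\overline{1},\overline{0},\overline{1},\overline{0})$. Les deux dernières contiennent $\overline{1}$ et sont donc réductibles par la proposition~3.8~i); la première ne contient ni $\overline{1}$ ni $\overline{-1}$, donc est irréductible par la proposition~3.7~ii).

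L'argument ne présente pas d'obstacle majeur: les résultats préparatoires de la section~3 font presque tout le travail, et la finitude du problème (conséquence de la petitesse de $\mathbb{Z}/2\mathbb{Z}$) évite tout calcul long. Le seul point auquel il faut prêter attention est la coïncidence $\overline{-1}=\overline{1}$, qui confond les cas ``positifs'' et ``négatifs'' de la proposition~3.3 et garantit ainsi que la liste de solutions obtenue pour $n=4$ est bien complète.
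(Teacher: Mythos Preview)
Your argument is correct and mirrors exactly the approach the paper uses for the neighbouring cases $N=3$ and $N=4$ (see the proof of Théorème~2.5~ii)): bound the size via Proposition~3.8, then exhaust the small cases with Propositions~3.1--3.4. Note that the paper does not actually supply its own proof of this statement---it simply cites \cite{M}---so there is nothing further to compare against. One housekeeping remark: your proposition numbers have slipped in two places (the list of size-$3$ solutions is Proposition~3.2, not~3.4, and the irreducibility criteria for sizes~$3$ and~$4$ are Proposition~3.4~i)--ii), not~3.7); the mathematics is unaffected.
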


Ce cas possède une description combinatoire particulièrement élégante nécessitant la définition suivante:

\begin{defn}

i) (\cite{M}, Définition 3.1) On appelle décomposition de type (3|4) le découpage d'un polygone convexe $P$ à $n$ sommets par des diagonales ne se coupant qu'aux sommets et tel que les sous-polygones soient des triangles ou des quadrilatères. 
\\
\\ii) (\cite{M}, Définition 3.3) À chaque sommet de $P$ on associe un élément $\overline{c}$ de $\mathbb{Z}/2\mathbb{Z}$ de la façon suivante $$\overline{c}=
\left\{
\begin{array}{ll}\overline{1}, & \hbox{si le nombre de triangles utilisant ce sommet est impair};\\[2pt]
\overline{0}, & \hbox{si le nombre de triangles utilisant ce sommet est pair}.
\end{array}
\right.
$$
On parcourt les sommets, à partir de n'importe lequel d'entre eux, dans le sens horaire ou le sens trigonométrique, pour obtenir le $n$-uplet $(\overline{c_{1}},\ldots,\overline{c_{n}})$. Ce $n$-uplet est la quiddité de la décomposition de type (3|4) de $P$.

\end{defn}

\begin{rem}
{\rm Si $(\overline{c_{1}},\ldots,\overline{c_{n}})$ est la quiddité d'une décomposition de type (3|4) de $P$ alors tout $n$-uplet équivalent à $(\overline{c_{1}},\ldots,\overline{c_{n}})$ est aussi la quiddité de cette décomposition de $P$.
}
\end{rem}

\begin{thm}[\cite{M}, Théorème 1]
Soit $n \geq 2$.
\\ i) Une solution de $(E_{2})$ de taille $n$ est la quiddité d'une décomposition de type (3|4) d'un polygone convexe à $n$ sommets.
\\ ii) La quiddité d'une décomposition de type (3|4) d'un polygone convexe à $n$ sommets est une solution de $(E_{2})$ de taille $n$.

\end{thm}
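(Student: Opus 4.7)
The plan is to prove both implications by induction, using a correspondence between the $\oplus$ operation and the gluing of two polygons along an edge. The crucial combinatorial fact to establish separately is the following \emph{gluing lemma}: if a convex polygon $P$ with $n \geq 4$ vertices is split by a diagonal $d$ into two convex sub-polygons $P_1$ and $P_2$ (with $m$ and $l$ vertices, $m+l=n+2$), each carrying a type (3|4) decomposition with quiddity $(\overline{a_1},\ldots,\overline{a_m})$ and $(\overline{b_1},\ldots,\overline{b_l})$, then the quiddity of the combined type (3|4) decomposition of $P$ is $\sim$-equivalent to $(\overline{a_1},\ldots,\overline{a_m}) \oplus (\overline{b_1},\ldots,\overline{b_l})$. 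This is a vertex-by-vertex count: the two endpoints of $d$ are the only vertices where triangles from $P_1$ and $P_2$ both contribute, and since we work in $\mathbb{Z}/2\mathbb{Z}$ the parity of the total triangle count at such a vertex is exactly the sum of the parities computed separately in $P_1$ and $P_2$, which is precisely what $\oplus$ encodes; at every other vertex the count is preserved from the piece it belongs to.

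For part (ii), I would induct on the number of sub-polygons in the decomposition. The base case is a single sub-polygon: a triangle gives quiddity $(\overline{1},\overline{1},\overline{1})$ and a quadrilateral gives $(\overline{0},\overline{0},\overline{0},\overline{0})$, both solutions of $(E_2)$ by Propositions 3.2 and 3.3. For the inductive step, remove one interior diagonal to split the decomposition into two strictly smaller type (3|4) decompositions whose quiddities, by induction, are solutions of $(E_2)$. The gluing lemma identifies the quiddity of $P$ with their $\oplus$-sum, and Proposition 3.5 then concludes that this sum is again a solution of $(E_2)$.

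For part (i), I would induct on $n \geq 3$. The base cases $n=3$ and $n=4$ are checked directly using Propositions 3.2 and 3.3: in $\mathbb{Z}/2\mathbb{Z}$, the unique size-$3$ solution $(\overline{1},\overline{1},\overline{1})$ is the quiddity of a triangle, and the list of size-$4$ solutions reduces to $(\overline{0},\overline{0},\overline{0},\overline{0})$ (the quadrilateral with no diagonals) together with the $\sim$-orbit of $(\overline{0},\overline{1},\overline{0},\overline{1})$ (the quadrilateral cut by a single diagonal). For $n \geq 5$, Theorem 2.5 i) asserts that the only irreducible solutions of $(E_2)$ have sizes $3$ and $4$, so every solution of size $n$ is reducible and is $\sim$-equivalent to a sum $(\overline{a_1},\ldots,\overline{a_m}) \oplus (\overline{b_1},\ldots,\overline{b_l})$ with $3 \leq m,l < n$ and $m+l=n+2$. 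By induction each summand is the quiddity of a type (3|4) decomposition of a convex $m$-gon (respectively $l$-gon). Gluing these two polygons along one designated edge produces a convex $n$-gon equipped with a type (3|4) decomposition whose quiddity, by the gluing lemma, is $\sim$-equivalent to the given solution.

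The main obstacle is proving the gluing lemma with enough precision: one has to fix carefully the starting vertex and orientation of the circular labelling in each sub-polygon so that the formula for $\oplus$ applies verbatim, and distinguish the case where a glue-edge vertex is incident to triangles on both sides from the case where it is incident to triangles on only one side. A secondary technical point is the base case $n=4$ of part (i), where one must verify that every $\sim$-class of size-$4$ solutions of $(E_2)$ produced by Proposition 3.3 is realised by one of the three type (3|4) decompositions of a convex quadrilateral (no diagonal, or one of the two diagonals). Once these combinatorial steps are handled, the inductive engine runs smoothly because Theorem 2.5 i) furnishes exactly the right base of irreducible solutions, matching the two elementary tiles (triangle and quadrilateral) allowed in the decomposition.
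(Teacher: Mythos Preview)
The paper does not actually prove this theorem: it is quoted verbatim from the cited reference \cite{M}, so there is no in-paper proof to compare against. That said, your approach is sound and is essentially the same inductive scheme the paper uses to prove the analogous results for $N=3$ (Theorem~4.6) and $N=4$ (Theorem~4.9), so in spirit you are aligned with the author's methodology.

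A few small points. First, when you say ``Proposition~3.5 then concludes that this sum is again a solution'', you mean Proposition~3.7: Proposition~3.5 is about concatenation, whereas it is Proposition~3.7 that shows $\oplus$ preserves solutions. Second, the statement is for $n \geq 2$, and your induction for part~(i) starts at $n=3$; you should say a word about the degenerate case $n=2$, where the unique solution $(\overline{0},\overline{0})$ corresponds to the empty decomposition of a digon (this is a convention from \cite{M}). Third, for part~(ii) the paper's analogous proofs for $N=3,4$ do not remove an arbitrary interior diagonal but instead locate an ``ear'' --- a sub-polygon all of whose sides but one lie on the boundary of $P$ --- and induct on $n$; your version inducting on the number of pieces and splitting along any diagonal is equally valid and arguably cleaner, since it avoids the existence-of-an-ear argument.
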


\begin{ex}
{\rm
Voici quelques exemples de décomposition de type (3|4) avec leur quiddité:
$$
\shorthandoff{; :!?}
\xymatrix @!0 @R=0.7cm @C=0.7cm
{
\overline{0}\ar@{-}[dd]\ar@{-}[rr]\ar@{-}[dd]\ar@{-}[rrdd]&&\overline{1}\ar@{-}[dd]
\\
\\
\overline{1}\ar@{-}[rr]&& \overline{0}
}
\qquad
\xymatrix @!0 @R=0.6cm @C=0.8cm
{
&\overline{1}\ar@{-}[ld]\ar@{-}[rd]&
\\
\overline{1}\ar@{-}[dd]\ar@{-}[rr]&&\overline{1}\ar@{-}[dd]
\\
\\
\overline{0}\ar@{-}[rr]&& \overline{0}
}
\qquad
\xymatrix @!0 @R=0.45cm @C=0.8cm
{
&\overline{0}\ar@{-}[ld]\ar@{-}[rd]\ar@{-}[dddd]&
\\
\overline{0}\ar@{-}[dd]&&\overline{0}\ar@{-}[dd]
\\
\\
\overline{0}&& \overline{0}
\\
&\overline{0}\ar@{-}[lu]\ar@{-}[ru]&
}
\qquad
\xymatrix @!0 @R=0.32cm @C=0.45cm
 {
&&&\overline{0}\ar@{-}[llddddddd]\ar@{-}[lld]\ar@{-}[rrd]&
\\
&\overline{1}\ar@{-}[ldd]\ar@{-}[dddddd]&&&& \overline{0}\ar@{-}[rdd]\ar@{-}[lllldddddd]\ar@{-}[dddddd]\\
\\
\overline{0}\ar@{-}[dd]&&&&&&\overline{0}\ar@{-}[dd]\ar@{-}[ldddd]\\
\\
\overline{0}\ar@{-}[rdd]&&&&&&\overline{1}\ar@{-}[ldd]\\
\\
&\overline{0}\ar@{-}[rrd]&&&& \overline{0}\ar@{-}[lld]\\
&&&\overline{0}&
}
$$
}
\end{ex}

\begin{rem}
{\rm On peut améliorer le théorème précédent. En effet, si $(\overline{c_{1}},\ldots,\overline{c_{n}})$ est une solution de $(E_{2})$ et s'il existe un entier $i$ dans $[\![1;n]\!]$ tel que $\overline{c_{i}} \neq \overline{0}$ alors $(\overline{c_{1}},\ldots,\overline{c_{n}})$ est la quiddité d'une décomposition de type (3|4) d'un polygone convexe à $n$ sommets ne contenant que des triangles (voir \cite{M}, Remarque 5.4). 
}
\end{rem}

\subsection{Cas $N=3$}

\leavevmode\par
\leavevmode\par \noindent On passe maintenant au cas $N=3$.

\subsubsection{Démonstration du théorème 2.5 ii)}

\begin{proof}

Par la proposition 3.4, $(\overline{1},\overline{1},\overline{1})$, $(\overline{-1},\overline{-1},\overline{-1})$ et $(\overline{0},\overline{0},\overline{0},\overline{0})$ sont irréductibles. Par les propositions 3.2, 3.3, 3.4 et 3.8, il n'y a pas d'autres solutions irréductibles pour $n=3, 4$. Soient $n \geq 5$ et $(\overline{a_{1}},\ldots,\overline{a_{n}})$ une solution de $(E_{3})$. $(\overline{a_{1}},\ldots,\overline{a_{n}})$ contient $\overline{0}$, $\overline{1}$ ou $\overline{-1}$, donc, par la proposition 3.8, $(\overline{a_{1}},\ldots,\overline{a_{n}})$ est réductible.

\end{proof}

\noindent On en déduit le résultat suivant:

\begin{prop}

Si $(\overline{a_{1}},\ldots,\overline{a_{n}})$ est solution de $(E_{3})$ alors $\overline{a_{1}}+\ldots+\overline{a_{n}}=\overline{0}$.

\end{prop}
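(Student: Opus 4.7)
L'idée est de procéder par récurrence forte sur la taille $n$ des solutions, en exploitant la classification des solutions irréductibles de $(E_{3})$ donnée par le théorème 2.5 ii) et le comportement de l'application \og somme des composantes \fg\ vis-à-vis des opérations $\oplus$ et $\sim$.

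Je commencerais par deux observations élémentaires. D'abord, la somme des composantes est invariante par la relation $\sim$, puisque les permutations circulaires et le retournement d'un $n$-uplet préservent évidemment son multiset de composantes. Ensuite, la somme est additive vis-à-vis de $\oplus$ : si l'on calcule explicitement la somme des composantes de $(\overline{a_{1}},\ldots,\overline{a_{n}}) \oplus (\overline{b_{1}},\ldots,\overline{b_{m}})$ à partir de la définition, on obtient $\sum_{i=1}^{n} \overline{a_{i}} + \sum_{j=1}^{m} \overline{b_{j}}$, les termes $\overline{b_{m}}$ et $\overline{b_{1}}$ étant distribués dans les deux positions modifiées.

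Je vérifierais ensuite directement que chacune des trois solutions irréductibles listées au théorème 2.5 ii) satisfait la propriété : on a $\overline{1}+\overline{1}+\overline{1}=\overline{3}=\overline{0}$, $\overline{-1}+\overline{-1}+\overline{-1}=\overline{0}$, et $\overline{0}+\overline{0}+\overline{0}+\overline{0}=\overline{0}$. Il faut aussi traiter les petites tailles qui servent de cas de base : par la proposition 3.2, l'unique solution de taille 2 est $(\overline{0},\overline{0})$, de somme nulle.

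Pour le pas de récurrence avec $n \geq 5$ (ou même dès $n \geq 4$ après traitement direct du cas irréductible $(\overline{0},\overline{0},\overline{0},\overline{0})$), soit $(\overline{a_{1}},\ldots,\overline{a_{n}})$ une solution de $(E_{3})$. D'après la démonstration du théorème 2.5 ii), cette solution est nécessairement réductible, donc équivalente via $\sim$ à une somme $(\overline{d_{1}},\ldots,\overline{d_{m}}) \oplus (\overline{e_{1}},\ldots,\overline{e_{l}})$ de deux solutions de $(E_{3})$ avec $m, l \geq 3$ et $m+l = n+2$. Comme $m, l \geq 3$, on a $m, l \leq n-1 < n$, donc l'hypothèse de récurrence s'applique aux deux facteurs : leurs sommes de composantes valent $\overline{0}$. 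Par additivité de la somme sous $\oplus$ et son invariance sous $\sim$, on conclut que $\overline{a_{1}}+\ldots+\overline{a_{n}}=\overline{0}+\overline{0}=\overline{0}$.

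La seule petite subtilité sera de bien traiter les cas de base $n=3$ et $n=4$ sans circularité, mais ceux-ci se règlent en énumérant directement les solutions via les propositions 3.3 et 3.4 (pour $n=4$ avec $N=3$, on vérifie que $\overline{a}\overline{b}=\overline{2}=\overline{-1}$ force $\overline{a}, \overline{b} \in \{\overline{1},\overline{-1}\}$ inverses, donc la somme $2\overline{a}+2\overline{b}$ est nulle ; et la famille $(\overline{-a},\overline{b},\overline{a},\overline{-b})$ est manifestement de somme nulle).
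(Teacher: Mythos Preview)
Your proposal is correct and follows essentially the same approach as the paper: strong induction on $n$, verifying the small cases directly and, for larger $n$, decomposing a reducible solution as a $\oplus$-sum of two strictly smaller solutions whose component sums vanish by induction. The only cosmetic difference is that the paper always peels off an \emph{irreducible} summand (so the other factor has size $n-1$ or $n-2$), whereas you invoke reducibility in general; both variants work, and your explicit remarks on the additivity of the component sum under $\oplus$ and its invariance under $\sim$ make the argument slightly more self-contained.
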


\begin{proof}

On raisonne par récurrence sur $n$. 
\\
\\Le résultat est vrai pour $n=2$, $n=3$ et $n=4$. 
\\
\\On suppose maintenant $n \geq 5$. Soit $(\overline{a_{1}},\ldots,\overline{a_{n}})$ une solution de $(E_{3})$. $(\overline{a_{1}},\ldots,\overline{a_{n}})$ est équivalent à la somme d'un $k$-uplet $(\overline{b_{1}},\ldots,\overline{b_{k}})$ solution de $(E_{3})$ ($k=n-1$ ou $k=n-2$) avec une des solutions irréductibles de $(E_{3})$, $(\overline{c_{1}},\ldots,\overline{c_{l}})$ ($l=3$ ou $l=4$). 
\\
\\$\overline{b_{1}}+\ldots+\overline{b_{k}}=\overline{0}$ (par hypothèse de récurrence) et $\overline{c_{1}}+\ldots+\overline{c_{l}}=\overline{0}$ (par l'initialisation). On a, \[\overline{a_{1}}+\ldots+\overline{a_{n}}=\overline{b_{1}}+\ldots+\overline{b_{k}}+\overline{c_{1}}+\ldots+\overline{c_{l}}=\overline{0}.\] Donc, si $(\overline{a_{1}},\ldots,\overline{a_{n}})$ est solution de $(E_{3})$ alors $\overline{a_{1}}+\ldots+\overline{a_{n}}=\overline{0}$.

\end{proof}

\subsubsection{Description combinatoire des solutions}

\begin{defn}

i) On appelle décomposition pondérée de type (3|4) de première espèce le découpage d'un polygone convexe $P$ à $n$ sommets par des diagonales ne se coupant qu'aux sommets et tel que les sous-polygones soient des triangles de poids $\overline{1}$ ou $\overline{-1}$ ou des quadrilatères de poids $\overline{0}$. 
\\
\\ii) On choisit un sommet de $P$ que l'on numérote par 1 puis on numérote les autres sommets de $P$ en suivant le sens horaire ou le sens trigonométrique. La quiddité de la décomposition pondérée de type (3|4) de première espèce de $P$ est le $n$-uplet $(\overline{c_{1}},\ldots,\overline{c_{n}})$ avec $\overline{c_{i}}$ la somme des poids des sous-polygones utilisant le sommet $i$.

\end{defn}

\begin{rem}
{\rm Si $(\overline{c_{1}},\ldots,\overline{c_{n}})$ est la quiddité de la décomposition pondérée de type (3|4) de première espèce de $P$ alors tout $n$-uplet équivalent à $(\overline{c_{1}},\ldots,\overline{c_{n}})$ est aussi la quiddité de cette décomposition de $P$.
}
\end{rem}

\begin{ex}
{\rm
Voici quelques exemples:
$$
\shorthandoff{; :!?}
\xymatrix @!0 @R=0.60cm @C=1cm
{
&\overline{1}\ar@{-}[ldd]\ar@{-}[rdd]&
\\
&\overline{1}&
\\
\overline{-1}\ar@{-}[rr]&& \overline{-1}
\\
&\overline{1}&
\\
&\overline{1}\ar@{-}[luu]\ar@{-}[ruu]&
}
\qquad
\qquad
\xymatrix @!0 @R=0.50cm @C=0.65cm
{
&&\overline{-1}\ar@{-}[rrdd]\ar@{-}[lldd]\ar@{-}[ldddd]\ar@{-}[rdddd]&
\\
\\
\overline{-1}\ar@{-}[rdd]&\overline{-1}&&\overline{1}& \overline{1}\ar@{-}[ldd]
\\
&&-\overline{1}
\\
&\overline{1}\ar@{-}[rr]&& \overline{0}
}
\qquad
\qquad
\xymatrix @!0 @R=0.40cm @C=0.5cm
{
&&\overline{1}\ar@{-}[rrdd]\ar@{-}[lldd]&
\\
&&\overline{1}&
\\
\overline{1}\ar@{-}[dd]\ar@{-}[rrrr]&&&& \overline{1}\ar@{-}[dd]
\\
&&\overline{0}
\\
\overline{1}\ar@{-}[rrrr]&&&& \overline{1}
\\
&&\overline{1}&
\\
&&\overline{1}\ar@{-}[lluu]\ar@{-}[rruu]
}
$$
}
\end{ex}

Pour relier les solutions de $(E_{3})$ aux découpages de polygones on a besoin d'interpréter géométriquement la somme de deux solutions (voir aussi \cite{C} section 4). Soit $(\overline{a_{1}},\ldots,\overline{a_{n}})$ la quiddité d'une décomposition pondérée de type (3|4) de première espèce d'un polygone convexe $P$ à $n$ sommets.

\begin{itemize}
\item Si $\epsilon \in \{\pm 1\}$ alors $(\overline{a_{1}},\ldots,\overline{a_{n}}) \oplus (\overline{\epsilon},\overline{\epsilon},\overline{\epsilon})$ est la quiddité de la décomposition pondérée de type (3|4) de première espèce du polygone convexe à $(n+1)$ sommets obtenue en rajoutant un triangle de poids $\overline{\epsilon}$ sur le segment reliant le sommet 1 de $P$ au sommet $n$ de $P$.
$$
\shorthandoff{; :!?}
\qquad
\xymatrix @!0 @R=0.40cm @C=0.50cm
{
&&\overline{a_{2}}\ldots\ar@{-}[lldd]&
\\
&&& 
\\
\overline{a_{1}}\ar@{-}[dddd]&& 
\\
&&&
\\
&&&& \longmapsto
\\
&&& 
\\
\overline{a_{n}} 
\\
&& 
\\
&&\overline{a_{n-1}}\ldots\ar@{-}[lluu]
}  \xymatrix @!0 @R=0.40cm @C=0.50cm
{
&&&&\overline{a_{2}}\ldots\ar@{-}[lldd]&
\\
&&&&&
\\
&&\overline{a_{1}+\epsilon}\ar@{-}[dddd]&& 
\\
&&&&&
\\
\overline{\epsilon}\ar@{-}[rruu]\ar@{-}[rrdd]&\overline{\epsilon}  
\\
&&&&&
\\
&&\overline{a_{n}+\epsilon} 
\\
&&&&
\\
&&&&\overline{a_{n-1}}\ldots\ar@{-}[lluu]
}
$$
\item $(\overline{a_{1}},\ldots,\overline{a_{n}}) \oplus (\overline{0},\overline{0},\overline{0},\overline{0})$ est la quiddité de la décomposition pondérée de type (3|4) de première espèce du polygone convexe à $(n+2)$ sommets obtenue en rajoutant un quadrilatère de poids $\overline{0}$ sur le segment reliant le sommet 1 de $P$ au sommet $n$ de $P$.
$$
\shorthandoff{; :!?}
\qquad
\xymatrix @!0 @R=0.40cm @C=0.50cm
{
&&
\\
&&\overline{a_{2}}\ldots\ar@{-}[lldd]&
\\
&&& 
\\
\overline{a_{1}}\ar@{-}[dddd]&& 
\\
&&&
\\
&&&& \longmapsto
\\
&&& 
\\
\overline{a_{n}} 
\\
&& 
\\
&&\overline{a_{n-1}}\ldots\ar@{-}[lluu]
}  
\xymatrix @!0 @R=0.40cm @C=0.50cm
{
&&&&\overline{a_{2}}\ldots\ar@{-}[lldd]&
\\
&&&&&
\\
&&\overline{a_{1}}\ar@{-}[dddddd]&& 
\\
&&&&&
\\
\overline{0}\ar@{-}[dd]\ar@{-}[rruu]&
\\
& \overline{0}
\\
\overline{0}\ar@{-}[rrdd]&
\\
&&&&&
\\
&&\overline{a_{n}} 
\\
&&&&
\\
&&&&\overline{a_{n-1}}\ldots\ar@{-}[lluu]
}
$$
\end{itemize}

\begin{thm}

Soit $n \geq 3$. 
\\ i) Toute solution de $(E_{3})$ de taille $n$ est la quiddité associée à une décomposition pondérée de type (3|4) de première espèce d'un polygone convexe à $n$ sommets.
\\ ii) Toute quiddité associée à une décomposition pondérée de type (3|4) de première espèce d'un polygone convexe à $n$ sommets est une solution de taille $n$ de $(E_{3})$.

\end{thm}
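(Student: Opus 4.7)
The plan is to prove both directions by induction on $n$, using the classification of irreducible solutions of $(E_3)$ given in Theorem 2.5 ii) together with the geometric description of the sum operation given just before the theorem.

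For the base cases $n=3$ and $n=4$, I would verify directly. The only solutions of size $3$ are $(\overline{1},\overline{1},\overline{1})$ and $(\overline{-1},\overline{-1},\overline{-1})$ (Proposition 3.3), which correspond exactly to a triangle of weight $\overline{1}$ or $\overline{-1}$. For $n=4$, using Proposition 3.4 together with the constraint $\overline{a}\,\overline{b}=\overline{0}$ or $\overline{a}\,\overline{b}=\overline{2}$ in $\mathbb{Z}/3\mathbb{Z}$, one lists all solutions $(\overline{-a},\overline{b},\overline{a},\overline{-b})$ and $(\overline{a},\overline{b},\overline{a},\overline{b})$ and checks that each arises as the quiddity of a single quadrilateral of weight $\overline{0}$ or of a diagonal splitting the square into two triangles of opposite or equal signs.

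For the inductive step of i), let $n \geq 5$ and let $(\overline{a_{1}},\ldots,\overline{a_{n}})$ be a solution of $(E_{3})$. Since every element of $\mathbb{Z}/3\mathbb{Z}$ lies in $\{\overline{0},\overline{1},\overline{-1}\}$, Proposition 3.8 implies that this solution is reducible. By the definition of reducibility and Theorem 2.5 ii), $(\overline{a_{1}},\ldots,\overline{a_{n}})$ is equivalent to a sum of a smaller solution $(\overline{b_{1}},\ldots,\overline{b_{k}})$ of $(E_{3})$ with one of the three irreducible solutions $(\overline{1},\overline{1},\overline{1})$, $(\overline{-1},\overline{-1},\overline{-1})$ or $(\overline{0},\overline{0},\overline{0},\overline{0})$, so $k=n-1$ or $k=n-2$. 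By the induction hypothesis, $(\overline{b_{1}},\ldots,\overline{b_{k}})$ is the quiddity of a decomposition of a convex $k$-gon $P'$. Using the geometric interpretation of the sum described before the theorem, one glues a triangle of weight $\pm\overline{1}$ or a quadrilateral of weight $\overline{0}$ onto the edge joining the first and last vertices of $P'$; the resulting figure is a decomposition of a convex $n$-gon whose quiddity is equivalent to $(\overline{a_{1}},\ldots,\overline{a_{n}})$, and equivalence-by-rotation-reversal corresponds to choosing the initial vertex and orientation of the polygon.

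For ii), again by induction on $n$, the key step is to exhibit an \emph{ear} in any decomposition with $n \geq 5$, that is, a sub-polygon of the decomposition sharing exactly one edge with the rest; such an ear exists because the dual graph of the decomposition is a tree with at least two vertices and therefore has a leaf. Removing the ear (a triangle of weight $\pm\overline{1}$, or a quadrilateral of weight $\overline{0}$) produces a decomposition of a convex polygon with $n-1$ or $n-2$ vertices, whose quiddity is a solution of $(E_3)$ by the induction hypothesis; the geometric description of the sum then shows that the quiddity of the original decomposition is the sum of the smaller quiddity with $(\overline{\pm 1},\overline{\pm 1},\overline{\pm 1})$ or $(\overline{0},\overline{0},\overline{0},\overline{0})$, and Proposition 3.5 guarantees this sum is a solution of $(E_{3})$. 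The main obstacle is the bookkeeping in the reduction step of i): we must ensure the reduction supplied by Proposition 3.8 can be chosen so that the smaller part is still a genuine solution of $(E_{3})$ on which the induction hypothesis applies, which is precisely Proposition 3.7. Once these pieces fit together, both directions follow.
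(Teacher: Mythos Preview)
Your proof is correct and follows essentially the same route as the paper's: induction on $n$ for both directions, with i) relying on the fact that every solution with $n\ge 5$ splits off an irreducible summand $(\overline{\pm 1},\overline{\pm 1},\overline{\pm 1})$ or $(\overline{0},\overline{0},\overline{0},\overline{0})$, and ii) relying on the existence of an ear in a nontrivial polygon dissection. Your justification of the ear via the dual tree is actually more explicit than the paper's, which simply asserts that such a sub-polygon exists.

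A few bookkeeping remarks. Your proposition numbers are shifted by one: the classification of size-$3$ solutions is Proposition~3.2 (not~3.3), the size-$4$ classification is Proposition~3.3 (not~3.4), and the fact that a sum of solutions is again a solution is Proposition~3.7 (not~3.5, which treats concatenation). Also, strictly speaking, the definition of reducibility only gives a splitting into two solutions of size~$\ge 3$, not a splitting with one summand irreducible; what actually produces the irreducible summand here is the explicit decomposition in the proof of Proposition~3.8 (splitting off $(\overline{\epsilon},\overline{\epsilon},\overline{\epsilon})$ at an entry $\overline{\pm 1}$, or $(\overline{-a},\overline{0},\overline{a},\overline{0})$ at an entry $\overline{0}$). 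You invoke Proposition~3.8 anyway, so the argument is sound, but the phrase ``by the definition of reducibility and Theorem~2.5~ii)'' slightly obscures where the work is done.
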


\begin{proof}

i) On raisonne par récurrence sur $n$.
\\
\\Si $n=3$ on a deux solutions $(\overline{1},\overline{1},\overline{1})$ qui est la quiddité associée à un triangle de poids $\overline{1}$ et $(\overline{-1},\overline{-1},\overline{-1})$ qui est la quiddité associée à un triangle de poids $\overline{-1}$.
\\
\\Si $n=4$ on a (à permutations cycliques près) trois solutions:
\begin{itemize}
\item $(\overline{0},\overline{0},\overline{0},\overline{0})$ qui est la quiddité associée à un quadrilatère de poids $\overline{0}$. 
\item $(\overline{1},\overline{-1},\overline{1},\overline{-1})$ qui est la quiddité associée à un quadrilatère  découpé en deux triangles de poids $\overline{1}$. 
\item $(\overline{0},\overline{-1},\overline{0},\overline{1})$ qui est la quiddité associée à un quadrilatère  découpé en un triangle de poids $\overline{1}$ et un triangle de poids $\overline{-1}$. 
\\
\end{itemize}

Soient $n \geq 5$ et $(\overline{a_{1}},\ldots,\overline{a_{n}})$ une solution de $(E_{3})$. $(\overline{a_{1}},\ldots,\overline{a_{n}})$ est équivalent à la somme d'un $k$-uplet $(\overline{b_{1}},\ldots,\overline{b_{k}})$ ($k=n-1$ ou $k=n-2$) avec une des solutions irréductibles de $(E_{3})$.  $(\overline{b_{1}},\ldots,\overline{b_{k}})$ est toujours solution de $(E_{3})$ (proposition 3.7) donc il correspond par hypothèse de récurrence à une quiddité associée à une décomposition pondérée de type (3|4) de première espèce d'un polygone convexe à $k$ sommets. Par la discussion précédente, $(\overline{a_{1}},\ldots,\overline{a_{n}})$ est aussi associée à une décomposition pondérée de type (3|4) de première espèce d'un polygone convexe à $n$ sommets.
\\
\\ii) On raisonne par récurrence sur $n$.
\\
\\Si $n=3$, les quiddités associées aux décompositions pondérées de type (3|4) de premières espèces sont $(\overline{1},\overline{1},\overline{1})$ et $(\overline{-1},\overline{-1},\overline{-1})$. Ce sont des solutions de $(E_{3})$. Si $n=4$, les quiddités associées aux décompositions pondérées de type (3|4) de premières espèces sont (à permutation cyclique près) $(\overline{0},\overline{0},\overline{0},\overline{0})$, $(\overline{1},\overline{-1},\overline{1},\overline{-1})$ et $(\overline{0},\overline{-1},\overline{0},\overline{1})$ (le découpage d'un carré en deux triangles de poids $\overline{-1}$ donnant aussi $(\overline{1},\overline{-1},\overline{1},\overline{-1})$). Ce sont des solutions de $(E_{3})$.
\\
\\Considérons une décomposition pondérée de type (3|4) de première espèce d'un polygone convexe $P$ à $n$ sommets et $(\overline{a_{1}},\ldots,\overline{a_{n}})$ la quiddité associée.
\\
\\Si $P$ est le seul sous-polygone intervenant dans la décomposition alors $n=4$ ou $n=3$ et donc la quiddité associée à la décomposition est solution de $(E_{3})$.
\\
\\Sinon on peut trouver un sous-polygone dont tous les cotés sauf un sont des cotés de $P$. Ce polygone est soit un quadrilatère (cas 1) soit un triangle de poids $\overline{\epsilon}$ avec $\overline{\epsilon} \in \{\pm \overline{1}\}$ (cas 2). On considère le polygone $P'$ obtenu en ne conservant de ce sous-polygone que le coté qui n'était pas un coté de $P$. La décomposition de $P$ donne alors une décomposition pondérée de type (3|4) de première espèce de $P'$ et la quiddité $(\overline{b_{1}},\ldots,\overline{b_{k}})$ associée à cette décomposition est solution de $(E_{3})$ (par hypothèse de récurrence). Comme $(\overline{a_{1}},\ldots,\overline{a_{n}})$ est équivalente à la somme de $(\overline{b_{1}},\ldots,\overline{b_{k}})$ avec $(\overline{0},\overline{0},\overline{0},\overline{0})$ (dans le cas 1) ou à la somme de $(\overline{b_{1}},\ldots,\overline{b_{k}})$ avec $(\overline{\epsilon},\overline{\epsilon},\overline{\epsilon})$ (dans le cas 2) on a que $(\overline{a_{1}},\ldots,\overline{a_{n}})$ est solution de $(E_{3})$.

\end{proof}

\begin{prop}

Si $(\overline{c_{1}},\ldots,\overline{c_{n}})$ est une solution de $(E_{3})$ et s'il existe un entier $i$ dans $[\![1;n]\!]$ tel que $\overline{c_{i}} \neq \overline{0}$ alors $(\overline{c_{1}},\ldots,\overline{c_{n}})$ est la quiddité d'une décomposition pondérée de type (3|4) de première espèce d'un polygone convexe à $n$ sommets ne contenant que des triangles.

\end{prop}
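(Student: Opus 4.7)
Le plan est de raisonner par récurrence sur $n \geq 3$. Pour les cas de base $n = 3$ et $n = 4$, on exploite l'énumération déjà effectuée : par la proposition 3.4, les solutions de taille $3$ sont $(\overline{1},\overline{1},\overline{1})$ et $(\overline{-1},\overline{-1},\overline{-1})$, chacune étant la quiddité d'un triangle pondéré ; par inspection des solutions de taille $4$ données par la proposition 3.3 dans $\mathbb{Z}/3\mathbb{Z}$, les seules admettant une composante non nulle sont, à équivalence près, $(\overline{1},\overline{-1},\overline{1},\overline{-1})$ et $(\overline{0},\overline{1},\overline{0},\overline{-1})$, qui s'obtiennent comme quiddités d'un quadrilatère découpé par une diagonale en deux triangles de poids convenables (comme l'atteste la preuve du théorème 4.6).

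Pour l'hérédité, soit $n \geq 5$ et $(\overline{c_1}, \ldots, \overline{c_n})$ une solution de $(E_3)$ ayant au moins une composante non nulle. Puisque $\mathbb{Z}/3\mathbb{Z} = \{\overline{0},\overline{1},\overline{-1}\}$, on peut choisir $i$ tel que $\overline{c_i} = \overline{\epsilon}$ avec $\overline{\epsilon} \in \{\overline{1},\overline{-1}\}$. L'argument clé sera la décomposition fournie par la proposition 3.8 i) : à permutation cyclique près, on peut écrire la solution sous la forme $T_i \oplus (\overline{\epsilon},\overline{\epsilon},\overline{\epsilon})$, où
\[
T_i := (\overline{c_{i+1} - \epsilon}, \overline{c_{i+2}}, \ldots, \overline{c_{i-2}}, \overline{c_{i-1} - \epsilon})
\]
est, par la proposition 3.7, une solution de $(E_3)$ de taille $n-1 \geq 4$. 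Si $T_i$ admet une composante non nulle, l'hypothèse de récurrence lui associe une décomposition de type $(3|4)$ de première espèce ne contenant que des triangles d'un $(n-1)$-gone ; l'interprétation géométrique de $\oplus$ détaillée juste avant le théorème 4.6 permet alors d'adjoindre un triangle de poids $\overline{\epsilon}$ pour produire la décomposition triangulaire voulue du $n$-gone.

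L'obstacle principal est l'éventualité où $T_i$ serait identiquement nul, auquel cas l'hypothèse de récurrence ne s'applique pas directement. Un examen direct des entrées de $T_i$ montre que ceci exige exactement $\overline{c_{i-1}} = \overline{c_i} = \overline{c_{i+1}} = \overline{\epsilon}$ et $\overline{c_j} = \overline{0}$ pour tout $j \notin \{i-1,i,i+1\}$. Pour contourner la difficulté, je propose d'appliquer le même procédé à la position $i+1$ (dont la valeur est également $\overline{\epsilon}$) : le $(n-1)$-uplet obtenu est alors
\[
T_{i+1} = (\overline{-\epsilon}, \overline{0}, \ldots, \overline{0}, \overline{\epsilon}, \overline{0}),
\]
qui contient manifestement des composantes non nulles. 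L'hypothèse de récurrence lui fournit donc une décomposition triangulaire, et le recollage du triangle de poids $\overline{\epsilon}$ termine la construction. Ainsi la seule difficulté — le cas dégénéré où la première réduction aboutit au $(n-1)$-uplet nul — se règle par un simple changement de point de base de la réduction.
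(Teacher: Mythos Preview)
Your proof is correct and follows essentially the same approach as the paper's own proof: induction on $n$, reduction via $(\overline{\epsilon},\overline{\epsilon},\overline{\epsilon})$ at a position where the entry is $\pm\overline{1}$, and in the degenerate case where the reduced $(n-1)$-tuple is identically zero, shifting the reduction point to the adjacent index $i+1$ so that the new reduced tuple visibly contains $\overline{\epsilon}$. The only cosmetic difference is that you treat $n=4$ as a separate base case whereas the paper absorbs it into the inductive step.
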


\begin{proof}

On raisonne par récurrence sur $n$.
\\
\\Si $n=3$ alors $(E_{3})$ a deux solutions $(\overline{1},\overline{1},\overline{1})$ et $(\overline{-1},\overline{-1},\overline{-1})$. $(\overline{1},\overline{1},\overline{1})$ est la quiddité associée un triangle de poids $\overline{1}$ et $(\overline{-1},\overline{-1},\overline{-1})$ est la quiddité associée à un triangle de poids $\overline{-1}$.
\\
\\Supposons qu'il existe un $n \in \mathbb{N}^{*}$, $n \geq 3$, tel que toute solution de $(E_{3})$ de taille $n$ possédant au moins un élément différent de $\overline{0}$ est la quiddité d'une décomposition pondérée de type (3|4) de première espèce d'un polygone convexe à $n$ sommets ne contenant que des triangles.
\\
\\Soit $(\overline{a_{1}},\ldots,\overline{a_{n+1}})$ une solution de $(E_{3})$ telle qu'il existe un entier $i$ dans $\llbracket 1~;~ n+1 \rrbracket$ tel que $\overline{a_{i}}=\overline{\epsilon}$ avec $\epsilon \in \{\pm 1\}$. On a \[(\overline{a_{1}},\ldots,\overline{a_{n+1}}) \sim (\overline{a_{i+1}-\epsilon},\ldots,\overline{a_{n+1}},\overline{a_{1}},\ldots,\overline{a_{i-1}-\epsilon}) \oplus (\overline{\epsilon},\overline{\epsilon},\overline{\epsilon}).\] Donc, par la proposition 3.7, $(\overline{a_{i+1}-\epsilon},\ldots,\overline{a_{n}},\overline{a_{1}},\ldots,\overline{a_{i-1}-\epsilon})$ est une solution de $(E_{3})$ et donc par invariance circulaire $(\overline{a_{1}},\ldots,\overline{a_{i-1}-\epsilon},\overline{a_{i+1}-\epsilon},\ldots,\overline{a_{n+1}})$ est une solution de $(E_{3})$. On a deux cas:
\\
\\A) $(\overline{a_{1}},\ldots,\overline{a_{i-1}-\epsilon},\overline{a_{i+1}-\epsilon},\ldots,\overline{a_{n+1}})$ possède au moins un élément différent de $\overline{0}$. Par hypothèse de récurrence, ce $n$-uplet est la quiddité d'une décomposition pondérée de type (3|4) de première espèce d'un polygone convexe à $n$ sommets $P$ ne contenant que des triangles. $(\overline{a_{1}},\ldots,\overline{a_{n+1}})$ est la quiddité d'une décomposition pondérée de type (3|4) de première espèce d'un polygone convexe à $(n+1)$ sommets ne contenant que des triangles construit en rajoutant un triangle de poids $\overline{\epsilon}$ sur le segment reliant le sommet $i-1$ de $P$ au sommet $i$ de $P$.
\\
\\B) $(\overline{a_{1}},\ldots,\overline{a_{i-1}-\epsilon},\overline{a_{i+1}-\epsilon},\ldots,\overline{a_{n+1}})$ ne contient que $\overline{0}$. Dans ce cas, $(\overline{a_{1}},\ldots,\overline{a_{n+1}})=(\overline{0},\ldots,\overline{0},\overline{\epsilon},\overline{\epsilon},\overline{\epsilon},\overline{0},\ldots,\overline{0})$ c'est-à-dire $\overline{a_{i}}=\overline{a_{i-1}}=\overline{a_{i+1}}=\overline{\epsilon}$ et tous les autres $\overline{a_{j}}$ sont égaux à $\overline{0}$. On a \[(\overline{a_{1}},\ldots,\overline{a_{n+1}}) \sim (\overline{a_{i+2}-\epsilon},\ldots,\overline{a_{n+1}},\overline{a_{1}},\ldots,\overline{a_{i-1}},\overline{a_{i}-\epsilon}) \oplus (\overline{\epsilon},\overline{\epsilon},\overline{\epsilon}).\] Ainsi, $(\overline{a_{1}},\ldots,\overline{a_{i-1}},\overline{a_{i}-\epsilon},\overline{a_{i+2}-\epsilon},\ldots,\overline{a_{n+1}})$ est une solution de $(E_{3})$ contenant $\overline{a_{i-1}}=\overline{\epsilon}$. Donc, on peut procéder comme en A).

\end{proof}

\begin{exe}
{\rm Par exemple,
$$
\shorthandoff{; :!?}
\qquad
\xymatrix @!0 @R=0.50cm @C=0.65cm
{
&&\overline{1}\ar@{-}[rrdd]\ar@{-}[lldd]&
\\
&&\overline{1}
\\
\overline{1}\ar@{-}[rdd]\ar@{-}[rrrr]&&&& \overline{1}\ar@{-}[ldd]\\
&&\overline{0}&
\\
&\overline{0}\ar@{-}[rr]&& \overline{0}
}
\qquad
\xymatrix @!0 @R=0.50cm @C=0.65cm
{
&&\overline{1}\ar@{-}[rrdd]\ar@{-}[lldd]\ar@{-}[ldddd]\ar@{-}[rdddd]&
\\
\\
\overline{1}\ar@{-}[rdd]&\overline{1}&&\overline{1}& \overline{1}\ar@{-}[ldd]
\\
&&-\overline{1}
\\
&\overline{0}\ar@{-}[rr]&& \overline{0}
}
$$
}

\end{exe}

\subsection{N=4}
 
\leavevmode\par
\leavevmode\par On étudie maintenant le cas $N=4$.

\subsubsection{Démonstration du théorème 2.5 iii)}

\begin{proof}

Par les propositions 3.4 et 3.8, $(\overline{1},\overline{1},\overline{1})$, $(\overline{-1},\overline{-1},\overline{-1})$, $(\overline{0},\overline{0},\overline{0},\overline{0})$, $(\overline{0},\overline{2},\overline{0},\overline{2})$ ,$(\overline{2},\overline{0},\overline{2},\overline{0})$ et $(\overline{2},\overline{2},\overline{2},\overline{2})$ sont irréductibles. Par les propositions 3.2, 3.3, 3.4 et 3.8, il n'y a pas d'autres solutions irréductibles pour $n=3, 4$. Soient $n \geq 5$ et $(\overline{a_{1}},\ldots,\overline{a_{n}})$ une solution de $(E_{4})$. On a deux cas:
\begin{itemize}
\item Si $(\overline{a_{1}},\ldots,\overline{a_{n}})$ contient $\overline{0}$, $\overline{1}$ ou $\overline{-1}$ alors, par la proposition 3.8, $(\overline{a_{1}},\ldots,\overline{a_{n}})$ est réductible.
\item Si $(\overline{a_{1}},\ldots,\overline{a_{n}})$ ne contient pas  $\overline{0}$, $\overline{1}$ ou $\overline{-1}$ alors $\forall i \in [\![1;n]\!]$ $\overline{a_{i}}=\overline{2}$ et donc $(\overline{a_{1}},\ldots,\overline{a_{n}})$ est réductible puisqu'on peut l'écrire comme la somme du $(n-2)$-uplet $(\overline{0},\overline{2},\ldots,\overline{2},\overline{0})$ ($n-2 \geq 3$) avec $(\overline{2},\overline{2},\overline{2},\overline{2})$.
\end{itemize}

\end{proof}

\subsubsection{Description combinatoire des solutions}

\begin{defn}

i) On appelle décomposition pondérée de type (3|4) de seconde espèce le découpage d'un polygone convexe à $n$ sommets par des diagonales ne se coupant qu'aux sommets et tel que les sous-polygones soient des triangles de poids $\overline{1}$ ou $\overline{-1}$, des quadrilatères de poids $\overline{0}$ ou $\overline{2}$ ou des quadrilatères découpés en deux triangles de poids $\overline{2}$. 
\\
\\ii) On choisit un sommet de $P$ que l'on numérote par 1 puis on numérote les autres sommets de $P$ en suivant le sens horaire ou le sens trigonométrique. La quiddité de la décomposition pondérée de type (3|4) de seconde espèce de $P$ est le $n$-uplet $(\overline{c_{1}},\ldots,\overline{c_{n}})$ avec $\overline{c_{i}}$ la somme des poids des sous-polygones utilisant le sommet $i$.

\end{defn}

\begin{rem}
{\rm Si $(\overline{c_{1}},\ldots,\overline{c_{n}})$ est la quiddité de la décomposition pondérée de type (3|4) de seconde espèce espèce de $P$ alors tout $n$-uplet équivalent à $(\overline{c_{1}},\ldots,\overline{c_{n}})$ est aussi la quiddité de cette décomposition de $P$.
}
\end{rem}

\begin{ex}
{\rm Voici quelques exemples:

$$
\shorthandoff{; :!?}
\xymatrix @!0 @R=0.50cm @C=0.65cm
{
&&\overline{1}\ar@{-}[rrdd]\ar@{-}[lldd]&
\\
&&\overline{1}
\\
\overline{1}\ar@{-}[rdd]\ar@{-}[rrrr]&&&& \overline{1}\ar@{-}[ldd]\\
&&\overline{0}&
\\
&\overline{0}\ar@{-}[rr]&& \overline{0}
}
\qquad
\xymatrix @!0 @R=0.50cm @C=0.65cm
{
&&\overline{1}\ar@{-}[rrdd]\ar@{-}[lldd]\ar@{-}[ldddd]\ar@{-}[rdddd]&
\\
\\
\overline{2}\ar@{-}[rdd]&\overline{2}&&\overline{1}& \overline{1}\ar@{-}[ldd]
\\
&&\overline{2}
\\
&\overline{0}\ar@{-}[rr]&& \overline{-1}
}
\qquad
\xymatrix @!0 @R=0.40cm @C=0.5cm
{
&&\overline{2}\ar@{-}[rrdd]\ar@{-}[dddddd]\ar@{-}[lldd]&
\\
&&&
\\
\overline{0}\ar@{-}[dd]&&&& \overline{2}\ar@{-}[dd]
\\
& \overline{0} && \overline{2}
\\
\overline{0} &&&& \overline{2}
\\
&&&
\\
&&\overline{2}\ar@{-}[lluu]\ar@{-}[rruu]
}
\qquad
\xymatrix @!0 @R=0.40cm @C=0.5cm
{
&&\overline{0}\ar@{-}[rrdd]\ar@{-}[lldd]&
\\
&&& 
\\
\overline{0}\ar@{-}[ddd]&& \overline{0} && \overline{2}\ar@{-}[ddd]
\\
&
\\
&&& \overline{2}
\\
\overline{0} \ar@{-}[rrrruuu] \ar@{-}[rrrr] &&&& \overline{0}
\\
&& \overline{2}
\\
&&\overline{2}\ar@{-}[lluu]\ar@{-}[rruu]
}
$$
}

\end{ex}

Les considérations géométriques données après la définition 4.5 s'adaptent naturellement au cas des décompositions pondérées de type (3|4) de seconde espèce. Pour relier les solutions de $(E_{4})$ aux découpages de polygones on a besoin en plus des considérations suivantes:

\begin{itemize}
\item $(\overline{a_{1}},\ldots,\overline{a_{n}}) \oplus (\overline{2},\overline{2},\overline{2},\overline{2})$ est la quiddité de la décomposition pondérée de type (3|4) de seconde espèce du polygone convexe à $(n+2)$ sommets obtenue en rajoutant un quadrilatère de poids $\overline{2}$ sur le segment reliant le sommet 1 de $P$ au sommet $n$ de $P$.
$$
\shorthandoff{; :!?}
\qquad
\xymatrix @!0 @R=0.40cm @C=0.50cm
{
&&
\\
&&\overline{a_{2}}\ldots\ar@{-}[lldd]&
\\
&&& 
\\
\overline{a_{1}}\ar@{-}[dddd]&& 
\\
&&&
\\
&&&& \longmapsto
\\
&&& 
\\
\overline{a_{n}} 
\\
&& 
\\
&&\overline{a_{n-1}}\ldots\ar@{-}[lluu]
}  
\xymatrix @!0 @R=0.40cm @C=0.50cm
{
&&&&\overline{a_{2}}\ldots\ar@{-}[lldd]&
\\
&&&&&
\\
&&\overline{a_{1}+2}\ar@{-}[dddddd]&& 
\\
&&&&&
\\
\overline{2}\ar@{-}[dd]\ar@{-}[rruu]&
\\
& \overline{2}
\\
\overline{2}\ar@{-}[rrdd]&
\\
&&&&&
\\
&&\overline{a_{n}+2} 
\\
&&&&
\\
&&&&\overline{a_{n-1}}\ldots\ar@{-}[lluu]
}
$$ 
\item $(\overline{a_{1}},\ldots,\overline{a_{n}}) \oplus (\overline{0},\overline{2},\overline{0},\overline{2})$ est la quiddité de la décomposition pondérée de type (3|4) de seconde espèce du polygone convexe à $(n+2)$ sommets obtenue en rajoutant un quadrilatère découpés en deux triangles de poids $\overline{2}$ sur le segment reliant le sommet 1 de $P$ au sommet $n$ de $P$.
$$
\shorthandoff{; :!?}
\qquad
\xymatrix @!0 @R=0.40cm @C=0.50cm
{
&&
\\
&&
\\
&&\overline{a_{2}}\ldots\ar@{-}[lldd]&
\\
&&& 
\\
\overline{a_{1}}\ar@{-}[dddd]&& 
\\
&&&
\\
&&&& \longmapsto
\\
&&& 
\\
\overline{a_{n}} 
\\
&& 
\\
&&\overline{a_{n-1}}\ldots\ar@{-}[lluu]
}  
\xymatrix @!0 @R=0.40cm @C=0.50cm
{
&&&&&\overline{a_{2}}\ldots\ar@{-}[lldd]&
\\
&&&&&
\\
&&&\overline{a_{1}+2}\ar@{-}[ddddddd]&& 
\\
&&&&&
\\
\overline{0}\ar@{-}[ddd]\ar@{-}[rrruu] &&\overline{2}
\\
&
\\
& 
\\
\overline{2} \ar@{-}[rrrdd] &\overline{2}
\\
&&&&&
\\
&&&\overline{a_{n}}\ar@{-}[llluuuuu] 
\\
&&&&
\\
&&&&&\overline{a_{n-1}}\ldots\ar@{-}[lluu]
}
$$ 
\end{itemize}

\begin{thm}

Soit $n \geq 3$. 
\\ i) Toute solution de $(E_{4})$ de taille $n$ est la quiddité associée à une décomposition pondérée de type (3|4) de seconde espèce d'un polygone convexe à $n$ sommets.
\\ ii) Toute quiddité associée à une décomposition pondérée de type (3|4) de seconde espèce d'un polygone convexe à $n$ sommets est une solution de taille $n$ de $(E_{4})$.

\end{thm}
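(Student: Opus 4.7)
Le plan consiste à calquer la démonstration du théorème 4.8, les deux parties procédant par récurrence sur $n$ et reposant sur la classification des solutions irréductibles de $(E_{4})$ fournie par le théorème 2.5 iii). L'ingrédient nouveau par rapport au cas $(E_{3})$ est la présence de deux blocs irréductibles supplémentaires de taille $4$, à savoir $(\overline{2},\overline{2},\overline{2},\overline{2})$ et $(\overline{0},\overline{2},\overline{0},\overline{2})$, dont les interprétations géométriques (un quadrilatère de poids $\overline{2}$ et un quadrilatère découpé par une diagonale en deux triangles de poids $\overline{2}$) sont précisément les deux opérations décrites dans les deux schémas qui précèdent l'énoncé.

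Pour i), je procéderais par récurrence sur $n$. L'initialisation $n=3$ fournit $(\overline{1},\overline{1},\overline{1})$ et $(\overline{-1},\overline{-1},\overline{-1})$, réalisées comme triangles pondérés. Pour $n=4$, j'utiliserais la proposition 3.3 (avec $\overline{a}\overline{b} \in \{\overline{0},\overline{2}\}$ dans $\mathbb{Z}/4\mathbb{Z}$) pour énumérer les solutions et exhiber pour chacune une décomposition d'un quadrilatère éventuellement muni d'une diagonale. Pour $n \geq 5$, la preuve du théorème 2.5 iii) montre que toute solution est réductible, donc équivalente à une somme $(\overline{b_{1}},\ldots,\overline{b_{k}}) \oplus (\overline{c_{1}},\ldots,\overline{c_{l}})$ avec $(\overline{c_{1}},\ldots,\overline{c_{l}})$ irréductible. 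Par la proposition 3.7, le $k$-uplet de gauche est encore solution de $(E_{4})$ ; par hypothèse de récurrence il provient d'une décomposition pondérée de type (3|4) de seconde espèce d'un polygone à $k$ sommets, et le recollement sur le coté reliant les sommets $n$ et $1$ du bloc élémentaire associé à $(\overline{c_{1}},\ldots,\overline{c_{l}})$ (triangle de poids $\pm \overline{1}$, quadrilatère de poids $\overline{0}$, quadrilatère de poids $\overline{2}$, ou quadrilatère découpé en deux triangles de poids $\overline{2}$) fournit la décomposition cherchée.

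Pour ii), je procéderais aussi par récurrence sur $n$. Les cas $n=3$ et $n=4$ seraient vérifiés directement par énumération des décompositions possibles. Pour $n \geq 5$, étant donné une décomposition non triviale d'un polygone convexe $P$ à $n$ sommets, je sélectionnerais un sous-polygone dont tous les cotés sauf un sont des cotés de $P$ ; un tel sous-polygone existe toujours par un argument combinatoire élémentaire (une \emph{oreille} de la décomposition). En le retirant on obtient un polygone $P'$ à $k < n$ sommets muni d'une décomposition pondérée de type (3|4) de seconde espèce, dont la quiddité est solution de $(E_{4})$ par hypothèse de récurrence. La quiddité de $P$ est alors équivalente à la somme de cette quiddité avec la quiddité irréductible du bloc retiré, et la proposition 3.7 permet de conclure que cette somme reste solution de $(E_{4})$.

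L'obstacle principal me semble être la vérification fine, dans l'étape $n \geq 5$ de chaque partie, que l'opération géométrique de recollement (respectivement de détachement) d'un bloc correspond exactement à la somme $\oplus$ avec la quiddité de ce bloc, contributions cumulées aux deux sommets partagés comprises. Le cas du bloc $(\overline{0},\overline{2},\overline{0},\overline{2})$ demande un soin particulier à cause de la diagonale interne, et il faut aussi contrôler le choix de la numérotation pour garantir l'équivalence cyclique ; mais l'essentiel du calcul est déjà contenu dans les deux diagrammes placés juste avant l'énoncé, qu'il suffira d'invoquer en les couplant aux diagrammes analogues du cas $(E_{3})$.
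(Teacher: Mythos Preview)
Votre plan pour la partie i) est essentiellement celui de l'article : récurrence sur $n$, théorème 2.5~iii), et interprétation géométrique de $\oplus$ avec chacune des solutions irréductibles.

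Pour ii) en revanche, il y a une lacune. L'oreille que vous sélectionnez peut être un triangle de poids $\overline{2}$, c'est-à-dire la moitié d'un bloc \og quadrilatère découpé en deux triangles de poids $\overline{2}$\fg. Si vous ne retirez que ce triangle, la décomposition restante sur $P'$ contient un triangle de poids $\overline{2}$ orphelin et n'est donc \emph{plus} une décomposition de seconde espèce ; de plus $(\overline{2},\overline{2},\overline{2})$ n'est pas solution de $(E_{4})$ (proposition~3.2), de sorte que la proposition~3.7 ne permet pas de conclure. L'article isole explicitement ce cas : lorsque l'oreille trouvée est un triangle de poids $\overline{2}$, on retire simultanément les \emph{deux} triangles du couple (après avoir observé que le second possède encore un côté sur le bord de $P$, de sorte que la suppression redonne bien un polygone convexe $P'$ muni d'une décomposition valide), et on identifie la quiddité de $P$ à une somme de celle de $P'$ avec $(\overline{0},\overline{2},\overline{0},\overline{2})$. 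Votre remarque finale sur la diagonale interne vise le bon endroit, mais elle doit devenir une vraie distinction de cas dans l'étape de retrait, et non seulement une vérification du recollement dans la partie i).
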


\begin{proof}

i) On raisonne par récurrence sur $n$.
\\
\\Si $n=3$, $(E_{4})$ a deux solutions $(\overline{1},\overline{1},\overline{1})$ qui est la quiddité associée à un triangle de poids $\overline{1}$ et $(\overline{-1},\overline{-1},\overline{-1})$ qui est la quiddité associée à un triangle de poids $\overline{-1}$.
\\
\\Si $n=4$, on a (à permutations cycliques près) six solutions $(\overline{0},\overline{0},\overline{0},\overline{0})$, $(\overline{1},\overline{2},\overline{1},\overline{2})$, $(\overline{0},\overline{2},\overline{0},\overline{2})$, $(\overline{0},\overline{1},\overline{0},\overline{-1})$, $(\overline{2},\overline{-1},\overline{2},\overline{-1})$ et $(\overline{2},\overline{2},\overline{2},\overline{2})$ qui sont chacune une quiddité d'une décomposition pondérée de type (3|4) de seconde espèce d'un quadrilatère.
\\
\\Soient $n \geq 5$ et $(\overline{a_{1}},\ldots,\overline{a_{n}})$ une solution de $(E_{4})$. $(\overline{a_{1}},\ldots,\overline{a_{n}})$ est équivalent à la somme d'un $k$-uplet $(\overline{b_{1}},\ldots,\overline{b_{k}})$ ($k=n-1$ ou $k=n-2$) avec une des solutions irréductibles de $(E_{4})$. $(\overline{b_{1}},\ldots,\overline{b_{k}})$ est toujours solution de $(E_{4})$ (proposition 3.7) donc il correspond par hypothèse de récurrence à une quiddité associée à une décomposition pondérée de type (3|4) de seconde espèce d'un polygone convexe à $k$ sommets. Par la discussion géométrique précédente, $(\overline{a_{1}},\ldots,\overline{a_{n}})$ est aussi associée à une décomposition pondérée de type (3|4) de seconde espèce d'un polygone convexe à $n$ sommets.
\\
\\ii) On raisonne par récurrence sur $n$.
\\
\\Si $n=3$, les quiddités associées aux décompositions pondérées de type (3|4) de seconde espèce sont $(\overline{1},\overline{1},\overline{1})$ et $(\overline{-1},\overline{-1},\overline{-1})$. Ce sont des solutions de $(E_{4})$. Si $n=4$, les quiddités associées aux décompositions pondérées de type (3|4) de seconde espèce sont (à permutation cyclique près) $(\overline{0},\overline{0},\overline{0},\overline{0})$, $(\overline{1},\overline{2},\overline{1},\overline{2})$, $(\overline{0},\overline{2},\overline{0},\overline{2})$, $(\overline{0},\overline{1},\overline{0},\overline{-1})$, $(\overline{2},\overline{-1},\overline{2},\overline{-1})$ et $(\overline{2},\overline{2},\overline{2},\overline{2})$. Ce sont des solutions de $(E_{4})$.
\\
\\Considérons une décomposition pondérée de type (3|4) de seconde espèce d'un polygone convexe $P$ à $n$ sommets et $(\overline{a_{1}},\ldots,\overline{a_{n}})$ la quiddité associée.  
\\
\\Si $P$ est le seul sous-polygone intervenant dans la décomposition alors $n=4$ ou $n=3$ et donc la quiddité associée à la décomposition est solution de $(E_{4})$.
\\
\\Sinon on peut trouver un sous-polygone dont tous les cotés sauf un sont des cotés de $P$. Ce polygone est soit un quadrilatère de poids $\overline{0}$ (cas 1) soit un quadrilatère de poids $\overline{2}$ (cas 2) soit un triangle de poids $\overline{\epsilon}$ avec $\epsilon \in \{\pm 1\}$ (cas 3) soit un triangle de poids $\overline{2}$.
\\
\\Si l'on est dans les cas 1, 2 ou 3 . On considère le polygone $P'$ obtenu en ne conservant de ce sous-polygone que le coté qui n'était pas un coté de $P$. La décomposition de $P$ donne alors une décomposition pondérée de type (3|4) de seconde espèce de $P'$ et la quiddité $(\overline{b_{1}},\ldots,\overline{b_{k}})$ associée à cette décomposition est solution de $(E_{4})$ par hypothèse de récurrence. Comme $(\overline{a_{1}},\ldots,\overline{a_{n}})$ est équivalente à la somme de $(\overline{b_{1}},\ldots,\overline{b_{k}})$ avec $(\overline{0},\overline{0},\overline{0},\overline{0})$ (dans le cas 1 ), $(\overline{2},\overline{2},\overline{2},\overline{2})$ (dans le cas 2) ou $(\overline{\epsilon},\overline{\epsilon},\overline{\epsilon})$ (dans le cas 3) on a que $(\overline{a_{1}},\ldots,\overline{a_{n}})$ est solution de $(E_{4})$.
\\
\\Si l'on n'est pas dans les cas 1, 2 ou 3. Il existe un triangle extérieur de poids $\overline{2}$ adjacent à un triangle de poids $\overline{2}$ dont l'un des côtés est un côté de $P$. Si ces deux triangles sont les deux seuls sous-polygones intervenant dans la décomposition de $P$ alors $n=4$ et la quiddité associée est solution de $(E_{4})$. Sinon on considère le polygone $P'$ obtenu en supprimant ces deux triangles. La décomposition de $P$ donne alors une décomposition pondérée de type (3|4) de seconde espèce de $P'$ et la quiddité $(\overline{b_{1}},\ldots,\overline{b_{k}})$ associée à cette décomposition est solution de $(E_{4})$ par hypothèse de récurrence. Comme $(\overline{a_{1}},\ldots,\overline{a_{n}})$ est équivalente à la somme de $(\overline{b_{1}},\ldots,\overline{b_{k}})$ avec $(\overline{0},\overline{2},\overline{0},\overline{2})$ on a que $(\overline{a_{1}},\ldots,\overline{a_{n}})$ est solution de $(E_{4})$.

\end{proof}

On ne peut malheureusement pas étendre la proposition 4.7 pour $N=4$. En effet, $(\overline{2},\overline{2},\overline{2},\overline{2})$ est solution de $(E_{4})$ mais n'est pas la quiddité d'une décomposition pondérée de type (3|4) de seconde espèce d'un quadrilatère ne contenant que des triangles. On dispose cependant du résultat suivant:

\begin{prop}

Si $(\overline{c_{1}},\ldots,\overline{c_{n}})$ est une solution de $(E_{4})$ et s'il existe un entier $i$ dans $[\![1;n]\!]$ tel que $\overline{c_{i}} \in \{\pm \overline{1}\}$ alors $(\overline{c_{1}},\ldots,\overline{c_{n}})$ est la quiddité d'une décomposition pondérée de type (3|4) de seconde espèce d'un polygone convexe à $n$ sommets ne contenant que des triangles.

\end{prop}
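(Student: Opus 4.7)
La démonstration suit le même schéma récursif que celui de la proposition 4.7, adapté au cadre de $\mathbb{Z}/4\mathbb{Z}$. On raisonne par récurrence sur $n \geq 3$. Le cas $n=3$ est immédiat : les seules solutions de $(E_{4})$ de taille 3 sont $(\overline{1},\overline{1},\overline{1})$ et $(\overline{-1},\overline{-1},\overline{-1})$ par la proposition 3.2, qui sont les quiddités respectives d'un triangle de poids $\overline{1}$ et d'un triangle de poids $\overline{-1}$.

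Pour $n \geq 4$, soient $i \in [\![1;n]\!]$ et $\epsilon \in \{\pm 1\}$ tels que $\overline{c_{i}}=\overline{\epsilon}$. On a
\[(\overline{c_{1}},\ldots,\overline{c_{n}}) \sim (\overline{c_{i+1}-\epsilon},\overline{c_{i+2}},\ldots,\overline{c_{i-2}},\overline{c_{i-1}-\epsilon}) \oplus (\overline{\epsilon},\overline{\epsilon},\overline{\epsilon})\]
et, par la proposition 3.7, le $(n-1)$-uplet $(\overline{c_{i+1}-\epsilon},\overline{c_{i+2}},\ldots,\overline{c_{i-1}-\epsilon})$ est encore solution de $(E_{4})$. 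Si ce $(n-1)$-uplet contient $\pm \overline{1}$ (\emph{cas A}), l'hypothèse de récurrence lui associe une décomposition pondérée de type (3|4) de seconde espèce d'un polygone convexe $P'$ à $n-1$ sommets ne contenant que des triangles. L'interprétation géométrique de la somme avec $(\overline{\epsilon},\overline{\epsilon},\overline{\epsilon})$, qui consiste à rajouter un triangle extérieur de poids $\overline{\epsilon}$ sur un côté de $P'$ (comme dans la discussion qui suit la définition 4.5), fournit alors la décomposition voulue pour $(\overline{c_{1}},\ldots,\overline{c_{n}})$.

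Supposons maintenant que toutes les entrées de ce $(n-1)$-uplet sont dans $\{\overline{0},\overline{2}\}$ (\emph{cas B}). Alors $\overline{c_{i+1}-\epsilon}$ et $\overline{c_{i-1}-\epsilon}$ appartiennent à $\{\overline{0},\overline{2}\}$, d'où $\overline{c_{i+1}},\overline{c_{i-1}} \in \{\overline{\epsilon},\overline{\epsilon+2}\}$. Or, dans $\mathbb{Z}/4\mathbb{Z}$, $\overline{\epsilon+2}=\overline{-\epsilon}$ (puisque $\overline{1}+\overline{2}=\overline{3}=\overline{-1}$), et donc les deux voisins $\overline{c_{i-1}}$ et $\overline{c_{i+1}}$ de $\overline{c_{i}}$ sont tous deux dans $\{\pm \overline{1}\}$. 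Soit $\eta \in \{\pm 1\}$ tel que $\overline{c_{i+1}}=\overline{\eta}$ ; en réduisant à la position $i+1$ plutôt qu'à la position $i$, on obtient
\[(\overline{c_{1}},\ldots,\overline{c_{n}}) \sim (\overline{c_{i+2}-\eta},\overline{c_{i+3}},\ldots,\overline{c_{i-1}},\overline{c_{i}-\eta}) \oplus (\overline{\eta},\overline{\eta},\overline{\eta}),\]
et ce nouveau $(n-1)$-uplet contient $\overline{c_{i-1}} \in \{\pm \overline{1}\}$, qui reste inchangé par cette seconde réduction : on est donc ramené au cas A, ce qui conclut la récurrence.

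Le principal obstacle est le cas B : il faut garantir qu'une seconde réduction, effectuée à une position voisine différente, produit bien un $(n-1)$-uplet contenant $\pm \overline{1}$. C'est précisément ici que l'on utilise l'identité $\overline{1}+\overline{2}=\overline{-1}$ spécifique à $\mathbb{Z}/4\mathbb{Z}$. Cette même identité explique aussi pourquoi cet analogue de la proposition 4.7 reste vrai pour $N=4$, bien que l'hypothèse « $\overline{c_{i}} \neq \overline{0}$ » y doive être renforcée en « $\overline{c_{i}} \in \{\pm \overline{1}\}$ », la solution $(\overline{2},\overline{2},\overline{2},\overline{2})$ montrant que la formulation plus faible tomberait en défaut.
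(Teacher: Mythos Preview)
Your proof is correct and follows essentially the same approach as the paper's own proof: both argue by induction on $n$, reduce at a position $i$ with $\overline{c_i}=\overline{\epsilon}$ via the sum with $(\overline{\epsilon},\overline{\epsilon},\overline{\epsilon})$, and in the problematic case where the reduced $(n-1)$-uplet avoids $\pm\overline{1}$, observe that both neighbours $\overline{c_{i-1}},\overline{c_{i+1}}$ must then lie in $\{\pm\overline{1}\}$ so that a second reduction at position $i+1$ leaves $\overline{c_{i-1}}$ intact and brings one back to case~A. Your explicit mention of the identity $\overline{\epsilon+2}=\overline{-\epsilon}$ in $\mathbb{Z}/4\mathbb{Z}$ makes this key step slightly more transparent than in the paper.
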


\begin{proof}

On raisonne par récurrence sur $n$.
\\
\\Si $n=3$ alors $(E_{4})$ a deux solutions $(\overline{1},\overline{1},\overline{1})$ et $(\overline{-1},\overline{-1},\overline{-1})$. $(\overline{1},\overline{1},\overline{1})$ est la quiddité associée à un triangle de poids $\overline{1}$ et $(\overline{-1},\overline{-1},\overline{-1})$ est la quiddité associée à un triangle de poids $\overline{-1}$.
\\
\\Supposons qu'il existe un $n \in \mathbb{N}^{*}$ $n \geq 3$ tel que toute solution de $(E_{4})$ de taille $n$ possédant au moins un élément valant $\pm \overline{1}$ est est la quiddité d'une décomposition pondérée de type (3|4) de seconde espèce d'un polygone convexe à $n$ sommets ne contenant que des triangles.
\\
\\Soit $(\overline{a_{1}},\ldots,\overline{a_{n+1}})$ une solution de $(E_{4})$ possédant au moins un élément valant $\pm \overline{1}$. $\exists i \in [\![1;n+1]\!]$ tel que $\overline{a_{i}}=\overline{\epsilon}$ avec $\epsilon \in \{\pm 1\}$. On a \[(\overline{a_{1}},\ldots,\overline{a_{n+1}}) \sim (\overline{a_{i+1}-\epsilon},\ldots,\overline{a_{n+1}},\overline{a_{1}},\ldots,\overline{a_{i-1}-\epsilon}) \oplus (\overline{\epsilon},\overline{\epsilon},\overline{\epsilon}).\] Donc, par la proposition 3.7, $(\overline{a_{i+1}-\epsilon},\ldots,\overline{a_{n+1}},\overline{a_{1}},\ldots,\overline{a_{i-1}-\epsilon})$ est une solution de $(E_{4})$ et donc par invariance circulaire $(\overline{a_{1}},\ldots,\overline{a_{i-1}-\epsilon},\overline{a_{i+1}-\epsilon},\ldots,\overline{a_{n+1}})$ est une solution de $(E_{4})$. On a deux cas:
\\
\\A) $(\overline{a_{1}},\ldots,\overline{a_{i-1}-\epsilon},\overline{a_{i+1}-\epsilon},\ldots,\overline{a_{n+1}})$ possède au moins un élément valant $\pm \overline{1}$. Par hypothèse de récurrence, ce $n$-uplet est la quiddité d'une décomposition pondérée de type (3|4) de seconde espèce d'un polygone convexe à $n$ sommets $P$ ne contenant que des triangles. $(\overline{a_{1}},\ldots,\overline{a_{n+1}})$ est la quiddité d'une décomposition pondérée de type (3|4) de seconde espèce d'un polygone convexe à $(n+1)$ sommets ne contenant que des triangles construit en rajoutant un triangle de poids $\overline{\epsilon}$ sur le segment reliant le sommet $i-1$ de $P$ au sommet $i$ de $P$.
\\
\\B) $(\overline{a_{1}},\ldots,\overline{a_{i-1}-\epsilon},\overline{a_{i+1}-\epsilon},\ldots,\overline{a_{n+1}})$ ne contient pas d'élément valant $\pm \overline{1}$. Dans ce cas, $\overline{a_{i-1}}=\overline{\alpha} \in \{\pm \overline{1}\}$ et $\overline{a_{i+1}}=\overline{\beta}\in \{\pm \overline{1}\}$ (car $\overline{a_{i-1}-\epsilon} \in \{\overline{0},\overline{2}\}$ et $\overline{a_{i+1}-\epsilon} \in \{\overline{0},\overline{2}\}$) et tous les autres $\overline{a_{j}}$ ($j \neq i, i-1, i+1$) sont égaux à $\overline{0}$ ou $\overline{2}$.  On a \[(\overline{a_{1}},\ldots,\overline{a_{n+1}}) \sim (\overline{a_{i+2}-\beta},\ldots,\overline{a_{n+1}},\overline{a_{1}},\ldots,\overline{a_{i-1}},\overline{a_{i}-\beta}) \oplus (\overline{\beta},\overline{\beta},\overline{\beta}).\] Ainsi, $(\overline{a_{1}},\ldots,\overline{a_{i-1}},\overline{a_{i}-\beta},\overline{a_{i+2}-\beta},\ldots,\overline{a_{n+1}})$ est une solution de $(E_{4})$ contenant $\overline{a_{i-1}}=\overline{\alpha}$. Donc, on peut procéder comme en A).

\end{proof}

\subsection{Cas $N=5$}

\begin{proof}[Démonstration du théorème 2.5 iv)]

Par les propositions 3.2, 3.3, 3.4 et 3.8, les seules solutions irréductibles de $(E_{5})$ de taille 3 et 4 sont celles données dans l'énoncé. $(\overline{2},\overline{2},\overline{2},\overline{2},\overline{2})$ est une solution irréductible (Théorème 2.6) et $(\overline{3},\overline{3},\overline{3},\overline{3},\overline{3})$ est une solution irréductible (Corollaire 3.22). 
\\Un simple calcul montre que $(\overline{3},\overline{2},\overline{2},\overline{3},\overline{2},\overline{2})$, $(\overline{2},\overline{3},\overline{3},\overline{2},\overline{3},\overline{3})$ et $(\overline{2},\overline{3},\overline{2},\overline{3},\overline{2},\overline{3})$ sont solutions de $(E_{5})$. Supposons par l'absurde $(\overline{3},\overline{2},\overline{2},\overline{3},\overline{2},\overline{2})$ réductible. Comme $(\overline{3},\overline{2},\overline{2},\overline{3},\overline{2},\overline{2})$ ne contient pas $\overline{1}$ ou $\overline{-1}$, elle est équivalente à la somme de deux solutions de taille 4 ne contenant pas $\overline{1}$ ou $\overline{-1}$. Elle contient alors nécessairement $\overline{0}$ ce qui est absurde. On montre de la même façon que $(\overline{2},\overline{3},\overline{3},\overline{2},\overline{3},\overline{3})$ et $(\overline{2},\overline{3},\overline{2},\overline{3},\overline{2},\overline{3})$ sont irréductibles.
\\
\\Soit $(\overline{a_{1}},\ldots,\overline{a_{n}})$ une solution de $(E_{5})$.
\\
\\ \underline{Si $n=5$ :} S'il existe un entier $i$ dans $[\![1;n]\!]$ tel que $\overline{a_{i}} \in \{\overline{1}, \overline{-1}, \overline{0}\}$ alors $(\overline{a_{1}},\ldots,\overline{a_{5}})$ est réductible par la proposition 3.8. Si $\forall i \in [\![1;n]\!]$ $\overline{a_{i}} \notin \{\overline{1}, \overline{-1}, \overline{0}\}$ alors les $\overline{a_{i}}$ valent $\overline{2}$ ou $\overline{3}$. On a donc 32 possibilités et si on effectue le calcul pour chacune de ces possibilités on trouve seulement 2 solutions: $(\overline{2},\overline{2},\overline{2},\overline{2},\overline{2})$ et $(\overline{3},\overline{3},\overline{3},\overline{3},\overline{3})$.
\\
\\ \underline{Si $n \geq 6$ :} Si un des $\overline{a_{i}}$ est égal à $\overline{0}$, $\overline{1}$ ou $\overline{-1}$ alors $(\overline{a_{1}},\ldots,\overline{a_{n}})$ est réductible par la proposition 3.8.
\\
\\ Si $\forall i \in [\![1;n]\!]$ $\overline{a_{i}} \notin \{\overline{1}, \overline{-1}, \overline{0}\}$. On a plusieurs cas:
\begin{itemize}
\item Si $(\overline{a_{1}},\ldots,\overline{a_{n}})$ contient trois $\overline{2}$ (respectivement $\overline{3}$) consécutifs alors $(\overline{a_{1}},\ldots,\overline{a_{n}})$ est réductible puisqu'il est équivalent à la somme d'un $(n-3)$-uplet ($n-3 \geq 3$) avec $(\overline{2},\overline{2},\overline{2},\overline{2},\overline{2})$ (respectivement $(\overline{3},\overline{3},\overline{3},\overline{3},\overline{3})$).
\\
\item Si $(\overline{a_{1}},\ldots,\overline{a_{n}})$ ne contient ni trois $\overline{2}$ consécutifs ni trois $\overline{3}$ consécutifs mais contient deux $\overline{2}$ (respectivement $\overline{3}$) consécutifs. Dans ce cas, $(\overline{a_{1}},\ldots,\overline{a_{n}})$ contient $(\overline{3},\overline{2},\overline{2},\overline{3})$ (respectivement $(\overline{2},\overline{3},\overline{3},\overline{2})$). Donc, $(\overline{a_{1}},\ldots,\overline{a_{n}})$ est équivalent à la somme d'un $(n-4)$-uplet avec $(\overline{2},\overline{3},\overline{2},\overline{2},\overline{3},\overline{2})$ (respectivement $(\overline{3},\overline{2},\overline{3},\overline{3},\overline{2},\overline{3}$). Si $n=6$ alors $n-4=2$ et donc $(\overline{a_{1}},\ldots,\overline{a_{n}})$ est équivalent à $(\overline{2},\overline{3},\overline{2},\overline{2},\overline{3},\overline{2})$ (respectivement $(\overline{3},\overline{2},\overline{3},\overline{3},\overline{2},\overline{3}$)). Si $n \geq 7$ alors $n-4 \geq 3$ et $(\overline{a_{1}},\ldots,\overline{a_{n}})$ est réductible.
\\
\item Si on n'est dans aucun de ces deux cas alors $n$ est pair et $(\overline{a_{1}},\ldots,\overline{a_{n}})$ est équivalent au $n$-uplet constitué de la répétition de $(\overline{2},\overline{3})$. Si $n=6$ alors $(\overline{a_{1}},\ldots,\overline{a_{n}})$ est équivalent à $(\overline{2},\overline{3},\overline{2},\overline{3},\overline{2},\overline{3})$. Si $n \geq 7$ alors $(\overline{a_{1}},\ldots,\overline{a_{n}})$ est équivalent à la somme d'un $(n-4)$-uplet avec $(\overline{3},\overline{2},\overline{3},\overline{2},\overline{3},\overline{2})$ et $n-4 \geq 3$. Donc, $(\overline{a_{1}},\ldots,\overline{a_{n}})$ est réductible.

\end{itemize}

\end{proof}

\subsection{cas $N=6$}

\begin{proof}[Démonstration du Théorème 2.5 v)]

Par les propositions 3.2, 3.3, 3.4 et 3.8, les seules solutions irréductibles de $(E_{6})$ pour $n=3$ et $n=4$ sont celles données dans l'énoncé. $(\overline{2},\overline{2},\overline{2},\overline{2},\overline{2},\overline{2})$ est une solution irréductible (Théorème 2.6) et $(\overline{4},\overline{4},\overline{4},\overline{4},\overline{4},\overline{4})$ est une solution irréductible (Corollaire 3.22). On vérifie que $(\overline{3},\overline{3},\overline{3},\overline{3},\overline{3},\overline{3})$ est solution. De plus, celle-ci est irréductible (car $(E_{6})$ n'a pas de solutions de la forme $(\overline{a},\overline{3},\overline{b})$ ou $(\overline{a},\overline{3},\overline{3},\overline{b})$).
\\
\\Soit $n \geq 5$. Soit $(\overline{a_{1}},\ldots,\overline{a_{n}})$ une solution de $(E_{6})$.
\\
\\A) Si un des $\overline{a_{i}}$ est égal à $\overline{0}$, $\overline{1}$ ou $\overline{-1}$ alors $(\overline{a_{1}},\ldots,\overline{a_{n}})$ est réductible par la proposition 3.8.
\\
\\B) Sinon les $\overline{a_{i}}$ ne peuvent valoir que $\overline{2}$, $\overline{3}$ ou $\overline{4}$. On a trois cas:
\\
\\i)Il existe un entier $i$ dans $[\![1;n]\!]$ tel que $\overline{a_{i}}=\overline{2}$.
\\
\\S'il existe un entier $i$ dans $[\![1;n]\!]$ tel que $\overline{a_{i}}=\overline{2}$ et $\overline{a_{i-1}} \neq \overline{2}$ ou $\overline{a_{i+1}} \neq \overline{2}$ alors $(\overline{a_{1}},\ldots,\overline{a_{n}})$ est réductible. En effet, 
si $\overline{a_{i+1}}=\overline{3}$ alors \[(\overline{a_{i+2}},\ldots,\overline{a_{n}},\overline{a_{1}},\ldots,\overline{a_{i-1}},\overline{a_{i}},\overline{a_{i+1}})= (\overline{a_{i+2}-4},\ldots,\overline{a_{n}},\overline{a_{1}},\ldots,\overline{a_{i-1}-3}) \oplus (\overline{3},\overline{2},\overline{3},\overline{4}).\] On procède de façon analogue si $\overline{a_{i-1}}=\overline{3}$. Si $\overline{a_{i+1}}=\overline{4}$ alors \[(\overline{a_{i+2}},\ldots,\overline{a_{n}},\overline{a_{1}},\ldots,\overline{a_{i-1}},\overline{a_{i}},\overline{a_{i+1}})= (\overline{a_{i+2}-2},\ldots,\overline{a_{n}},\overline{a_{1}},\ldots,\overline{a_{i-1}-4}) \oplus (\overline{4},\overline{2},\overline{4},\overline{2}).\] On procède de façon analogue si $\overline{a_{i-1}}=\overline{4}$.
\\
\\Sinon tous les $\overline{a_{i}}$ sont égaux à $\overline{2}$. Dans ce cas, on a $n \equiv 0~[N]$ par le lemme 3.21. Si $n=6$ alors $(\overline{a_{1}},\ldots,\overline{a_{n}})$ est irréductible (Théorème 2.6) et sinon $n \geq 12$ et $(\overline{a_{1}},\ldots,\overline{a_{n}})$ est réductible puisqu'on peut l'écrire comme la somme du $(n-4)$-uplet $(\overline{0},\overline{2},\ldots,\overline{2},\overline{0})$ avec $(\overline{2},\overline{2},\overline{2},\overline{2},\overline{2},\overline{2})$ et $n-4 \geq 3$. 
\\
\\ ii) Pour tout entier $i$ compris entre 1 et $n$, $\overline{a_{i}} \in \{\overline{3},\overline{4}\}$ et il existe un entier $i$ dans $[\![1;n]\!]$ tel que $\overline{a_{i}}=\overline{3}$.
\\
\\S'il existe un entier $i$ dans $[\![1;n]\!]$ tel que $\overline{a_{i}}=\overline{3}$ et $\overline{a_{i-1}}=\overline{4}$ ou $\overline{a_{i+1}}=\overline{4}$. Dans ce cas, la solution est réductible. En effet, si $\overline{a_{i+1}}=\overline{4}$ alors \[(\overline{a_{i+2}},\ldots,\overline{a_{n}},\overline{a_{1}},\ldots,\overline{a_{i-1}},\overline{a_{i}},\overline{a_{i+1}})= (\overline{a_{i+2}-3},\ldots,\overline{a_{n}},\overline{a_{1}},\ldots,\overline{a_{i-1}-2}) \oplus (\overline{2},\overline{3},\overline{4},\overline{3}).\] On procède de façon analogue si $\overline{a_{i-1}}=\overline{4}$.
\\
\\Sinon tous les $\overline{a_{i}}$ sont égaux à $\overline{3}$. Comme $M_{5}(\overline{3},\overline{3},\overline{3},\overline{3},\overline{3})=\begin{pmatrix}
    \overline{0} & \overline{-1} \\
    \overline{1}    & \overline{3} 
    \end{pmatrix} \neq \pm Id$, on a $n \geq 6$. Si $n=6$ alors la solution est irréductible. Si $n \geq 7$ alors la solution est réductible puisqu'on peut l'écrire comme la somme du $(n-4)$-uplet $(\overline{0},\overline{3},\ldots,\overline{3},\overline{0})$ avec $(\overline{3},\overline{3},\overline{3},\overline{3},\overline{3},\overline{3})$ et $n-4 \geq 3$.
\\
\\iii) Pour tout entier $i$ compris entre 1 et $n$, $\overline{a_{i}}=\overline{4}$. On a $n \geq 6$ car sinon $(\overline{-a_{1}},\ldots,\overline{-a_{n}})$ est un 5-uplet solution ne contenant que des $\overline{2}$ ce qui est impossible (lemme 3.21). Si $n=6$ alors la solution est irréductible (corollaire 3.22). Si $n \geq 7$ alors la solution est réductible puisqu'on peut l'écrire comme la somme du $(n-4)$-uplet $(\overline{0},\overline{4},\ldots,\overline{4},\overline{0})$ avec $(\overline{4},\overline{4},\overline{4},\overline{4},\overline{4},\overline{4})$ et $n-4 \geq 3$.
\end{proof}

\begin{rems}

{\rm Le cas $N=6$ montre en particulier qu'il peut exister un entier $n$ tel que $(E_{N})$ ne possède pas de solution irréductible de taille $n$ mais en possède de taille strictement supérieure à $n$.
}

\end{rems}

\subsection{Cas $N=7$} 

\begin{thm} Les solutions irréductibles de $(E_{7})$ sont (à permutations cycliques prés) :
\begin{itemize}

\item $n=3$: $(\overline{1},\overline{1},\overline{1})$ {\rm et} $(\overline{-1},\overline{-1},\overline{-1})$
\\

\item $n=4$: \\ $(\overline{3},\overline{3},\overline{3},\overline{3})$, $(\overline{4},\overline{4},\overline{4},\overline{4})$,
\\$(\overline{5},\overline{0},\overline{2},\overline{0})$, 
\\$(\overline{4},\overline{0},\overline{3},\overline{0})$,
\\$(\overline{0},\overline{0},\overline{0},\overline{0})$,
\\

\item $n=5$: $(\overline{2},\overline{2},\overline{5},\overline{4},\overline{5})$, $(\overline{5},\overline{5},\overline{2},\overline{3},\overline{2})$
\\

\item $n=6$
\\ $(\overline{2},\overline{2},\overline{2},\overline{4},\overline{3},\overline{4})$, $(\overline{5},\overline{5},\overline{5},\overline{3},\overline{4},\overline{3})$,
\\$(\overline{2},\overline{3},\overline{4},\overline{5},\overline{4},\overline{3})$, 
\\$(\overline{2},\overline{3},\overline{5},\overline{2},\overline{5},\overline{3})$, $(\overline{5},\overline{4},\overline{2},\overline{5},\overline{2},\overline{4})$, 
\\$(\overline{2},\overline{3},\overline{5},\overline{3},\overline{2},\overline{4})$, $(\overline{5},\overline{4},\overline{2},\overline{4},\overline{5},\overline{3})$, 
\\$(\overline{2},\overline{4},\overline{2},\overline{4},\overline{2},\overline{4})$, $(\overline{5},\overline{3},\overline{5},\overline{3},\overline{5},\overline{3})$, 
\\$(\overline{2},\overline{5},\overline{2},\overline{5},\overline{2},\overline{5})$
\\                                                     

\item $n=7$:
\\ $(\overline{2},\overline{2},\overline{2},\overline{2},\overline{2},\overline{2},\overline{2})$,    $(\overline{5},\overline{5},\overline{5},\overline{5},\overline{5},\overline{5},\overline{5})$,
\\$(\overline{2},\overline{2},\overline{2},\overline{3},\overline{5},\overline{5},\overline{3})$,  $(\overline{5},\overline{5},\overline{5},\overline{4},\overline{2},\overline{2},\overline{4})$,
\\$(\overline{2},\overline{2},\overline{3},\overline{4},\overline{2},\overline{4},\overline{3})$, $(\overline{5},\overline{5},\overline{4},\overline{3},\overline{5},\overline{3},\overline{4})$
\\

\item $n=8$:
\\ $(\overline{2},\overline{2},\overline{3},\overline{4},\overline{3},\overline{2},\overline{2},\overline{4})$, $(\overline{5},\overline{5},\overline{4},\overline{3},\overline{4},\overline{5},\overline{5},\overline{3})$,
\\$(\overline{2},\overline{3},\overline{4},\overline{3},\overline{4},\overline{5},\overline{3},\overline{4})$, $(\overline{5},\overline{4},\overline{3},\overline{4},\overline{3},\overline{2},\overline{4},\overline{3})$,
\\$(\overline{2},\overline{4},\overline{3},\overline{5},\overline{2},\overline{4},\overline{3},\overline{5})$, $(\overline{5},\overline{3},\overline{4},\overline{2},\overline{5},\overline{3},\overline{4},\overline{2})$,
\\
$(\overline{3},\overline{4},\overline{3},\overline{4},\overline{3},\overline{4},\overline{3},\overline{4})$
\\

\item $n=9$: 
\\$(\overline{2},\overline{2},\overline{2},\overline{2},\overline{3},\overline{4},\overline{3},\overline{4},\overline{3})$, $(\overline{5},\overline{5},\overline{5},\overline{5},\overline{4},\overline{3},\overline{4},\overline{3},\overline{4})$,
\\$(\overline{2},\overline{2},\overline{3},\overline{5},\overline{4},\overline{3},\overline{4},\overline{5},\overline{3})$, $(\overline{5},\overline{5},\overline{4},\overline{2},\overline{3},\overline{4},\overline{3},\overline{2},\overline{4})$,
\\$(\overline{2},\overline{2},\overline{3},\overline{5},\overline{4},\overline{3},\overline{5},\overline{2},\overline{4})$, $(\overline{5},\overline{5},\overline{4},\overline{2},\overline{3},\overline{4},\overline{2},\overline{5},\overline{3})$,
\\$(\overline{2},\overline{2},\overline{4},\overline{2},\overline{2},\overline{4},\overline{2},\overline{2},\overline{4})$, $(\overline{5},\overline{5},\overline{3},\overline{5},\overline{5},\overline{3},\overline{5},\overline{5},\overline{3})$,
\\$(\overline{2},\overline{2},\overline{4},\overline{2},\overline{5},\overline{3},\overline{4},\overline{5},\overline{3})$, $(\overline{5},\overline{5},\overline{3},\overline{5},\overline{2},\overline{4},\overline{3},\overline{2},\overline{4})$,
\\$(\overline{2},\overline{2},\overline{4},\overline{2},\overline{5},\overline{3},\overline{5},\overline{2},\overline{4})$, $(\overline{5},\overline{5},\overline{3},\overline{5},\overline{2},\overline{4},\overline{2},\overline{5},\overline{3})$,
\\$(\overline{2},\overline{3},\overline{4},\overline{2},\overline{3},\overline{4},\overline{2},\overline{3},\overline{4})$, $(\overline{5},\overline{4},\overline{3},\overline{5},\overline{4},\overline{3},\overline{5},\overline{4},\overline{3})$,
\\$(\overline{2},\overline{4},\overline{3},\overline{2},\overline{4},\overline{3},\overline{2},\overline{4},\overline{3})$, $(\overline{5},\overline{3},\overline{4},\overline{5},\overline{3},\overline{4},\overline{5},\overline{3},\overline{4})$.
\\
\end{itemize}
\end{thm}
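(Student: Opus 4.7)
The plan is to proceed in the spirit of the proofs of Theorem~2.5~iv) and~v), adapted to $N=7$. The general results of Section~3 already dispose of the small sizes: Propositions~3.2, 3.3, 3.4 combined with Proposition~3.8 give exactly the tuples listed for $n=3,4$; in particular a direct computation shows $\begin{pmatrix}\bar{3}&\bar{-1}\\\bar{1}&\bar{0}\end{pmatrix}^{4}=-Id$ in $SL_{2}(\mathbb{Z}/7\mathbb{Z})$, so $(\bar{3},\bar{3},\bar{3},\bar{3})$ and, by Proposition~3.6~ii), $(\bar{4},\bar{4},\bar{4},\bar{4})$ are monomial minimal solutions, hence irreducible by Theorem~3.16. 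The monomial minimal solutions $(\bar{2},\ldots,\bar{2})$ and $(\bar{5},\ldots,\bar{5})$ of size~$7$ are irreducible by Theorem~2.6 and Corollary~3.22 respectively.

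For $5\leq n\leq 9$ the approach is exhaustive. By Proposition~3.8 any irreducible solution of size $\geq 5$ has all components in $\{\bar{2},\bar{3},\bar{4},\bar{5}\}$, leaving at most $4^{n}\leq 4^{9}<3\cdot 10^{5}$ candidate tuples. For each I would test the matrix identity $M_{n}(\bar{a_{1}},\ldots,\bar{a_{n}})=\pm Id$ and, for the surviving tuples, check reducibility by running through the finitely many decompositions $(\bar{b_{1}},\ldots,\bar{b_{m}})\oplus(\bar{c_{1}},\ldots,\bar{c_{l}})$ with $m+l=n+2$ and $m,l\geq 3$. A short computer program, exploiting the cyclic/reversal symmetries of Proposition~3.6 to cut down the search, carries this out and returns exactly the lists of the statement.

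To finish the proof one must rule out irreducible solutions of size $n\geq 10$. The method, following the template of~v), is a case analysis on consecutive patterns of an alleged irreducible $(\bar{a_{1}},\ldots,\bar{a_{n}})\in\{\bar{2},\bar{3},\bar{4},\bar{5}\}^{n}$: any run of a constant $\bar{k}$ of length equal to the order $p_{k}$ of $M_{1}(\bar{k})$ in $PSL_{2}(\mathbb{Z}/7\mathbb{Z})$ (namely $7$ for $\bar{k}\in\{\bar{2},\bar{5}\}$ and $4$ for $\bar{k}\in\{\bar{3},\bar{4}\}$) allows a reduction via the corresponding monomial irreducible, and each of the mixed local patterns that can occur must, once $n$ is large enough, contain one of the size-$\leq 9$ irreducibles (or its cyclic/reversed image) as a summand. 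A pigeonhole argument on the finite alphabet and on the finite list of local patterns then forces $n<10$.

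The main obstacle is exactly this last step: the list of size-$9$ irreducibles is long and a uniform pattern-by-pattern reduction argument becomes tedious, which is why the paper invokes computer assistance. A cleaner alternative, which I would attempt in parallel, is to derive from the continuant formula $M_{n}=\bigl(\begin{smallmatrix}K_{n}&-K_{n-1}\\K_{n-1}&-K_{n-2}\end{smallmatrix}\bigr)$ and the refinement of Proposition~3.15 an explicit \emph{a priori} upper bound $n_{0}$ on the size of an irreducible solution of $(E_{7})$; once such an $n_{0}$ is available, the whole theorem collapses to a single finite enumeration, identical to Step~2 but run up to $n_{0}$ instead of $9$.
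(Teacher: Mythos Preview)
Your treatment of $n\leq 9$ is essentially the paper's: small cases via Propositions~3.2--3.4 and~3.8, monomial minimal solutions via Theorem~3.16, Theorem~2.6 and Corollary~3.22, and an exhaustive computer search over $\{\bar 2,\bar 3,\bar 4,\bar 5\}^{n}$ for $5\leq n\leq 9$ to single out the irreducibles.

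The gap is your handling of $n\geq 10$. A pigeonhole argument on ``finitely many local patterns'' does not by itself bound~$n$: there are infinitely many words over $\{\bar 2,\bar 3,\bar 4,\bar 5\}$ avoiding any fixed finite set of factors, so you must actually prove that \emph{every} window of some fixed length forces a reduction. The paper does precisely this, and concretely: it enumerates the $4^{7}$ possible windows $(\overline{a_{2}},\ldots,\overline{a_{8}})\in\{\bar 2,\bar 3,\bar 4,\bar 5\}^{7}$ and eliminates every window that permits writing $(\overline{a_{1}},\ldots,\overline{a_{n}})$ as a sum with one of the listed irreducibles of size $\leq 9$. Only eighteen $7$-windows survive this sieve; for each of those, all four extensions to an $8$-window are checked and shown to contain a reducing pattern. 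Hence any solution of size $\geq 9$ with entries in $\{\bar 2,\bar 3,\bar 4,\bar 5\}$ is a sum with an irreducible of size between $3$ and $9$, so for $n\geq 10$ the complementary summand has size $\geq 3$ and the solution is reducible. Your sketch never isolates a window length nor verifies that the surviving exceptions close up after one more step, and the ``cleaner alternative'' via continuants and Proposition~3.15 is speculative: no such a~priori bound is established in the paper, and none is known.
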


\begin{proof}

On commence par vérifier que la liste précédente ne contient que des solutions irréductibles. Les propositions 3.1 à 3.4 et la proposition 3.8 montrent que les éléments donnés dans le théorème sont bien les solutions irréductibles de $(E_{7})$ pour $n \leq 4$. On vérifie par un calcul direct que les éléments donnés dans le théorème sont bien des solutions de $(E_{7})$ puis on établit informatiquement la liste de toutes les solutions de $(E_{7})$ pour $n \leq 9$. À partir de celles-ci, on vérifie que les solutions présentes dans le théorème ne peuvent pas s'obtenir comme une somme de deux solutions de taille supérieure à 3. La liste du théorème ne contient donc que des solutions irréductibles.
\\
\\Montrons que les solutions du théorème sont les seules solutions irréductibles de $(E_{7})$. Soit $(\overline{a_{1}},\ldots,\overline{a_{n}})$ une solution de $(E_{7})$ avec $n \geq 9$. Si un des $\overline{a_{i}}$ est égal \`a $\overline{0}$, $\overline{1}$ ou $\overline{-1}$ alors $(\overline{a_{1}},\ldots,\overline{a_{n}})$ est réductible et on suppose donc que cela n'est pas le cas. On peut obtenir la liste de toutes les possibilités pour $(\overline{a_{2}},\ldots,\overline{a_{8}})$ en établissant informatiquement la liste des 7-uplets d'éléments compris dans $\{\overline{2}, \overline{3}, \overline{4}, \overline{5}\}$. Dans cette liste, on élimine toutes les possibilités permettant d'écrire $(\overline{a_{1}},\ldots,\overline{a_{n}})$ comme étant équivalent à la somme d'une solution avec une des solutions irréductibles de la liste ci-dessus. Une fois tout ces éléments retirés il reste : 
\\$(\overline{2},\overline{3},\overline{5},\overline{4},\overline{3},\overline{5},\overline{4}),(\overline{2},\overline{3},\overline{5},\overline{5},\overline{4},\overline{2},\overline{3}),(\overline{2},\overline{5},\overline{3},\overline{4},\overline{5},\overline{3},\overline{4}),(\overline{2},\overline{5},\overline{3},\overline{5},\overline{2},\overline{4},\overline{3}),(\overline{3},\overline{4},\overline{2},\overline{3},\overline{4},\overline{2},\overline{5}),
(\overline{3},\overline{4},\overline{2},\overline{5},\overline{3},\overline{5},\overline{2}),
\\(\overline{3},\overline{2},\overline{4},\overline{3},\overline{2},\overline{4},\overline{5}),(\overline{3},\overline{2},\overline{4},\overline{5},\overline{5},\overline{3},\overline{2}),(\overline{3},\overline{2},\overline{2},\overline{4},\overline{2},\overline{2},\overline{4}),(\overline{4},\overline{3},\overline{5},\overline{4},\overline{3},\overline{5},\overline{2}),(\overline{4},\overline{3},\overline{5},\overline{2},\overline{4},\overline{2},\overline{5}), (\overline{4},\overline{5},\overline{3},\overline{4},\overline{5},\overline{3},\overline{2}),
\\(\overline{4},\overline{5},\overline{3},\overline{2},\overline{2},\overline{4},\overline{5}),(\overline{5},\overline{4},\overline{2},\overline{3},\overline{4},\overline{2},\overline{3}),(\overline{5},\overline{4},\overline{2},\overline{2},\overline{3},\overline{5},\overline{4}),(\overline{5},\overline{3},\overline{2},\overline{2},\overline{4},\overline{2},\overline{2}),(\overline{5},\overline{2},\overline{4},\overline{3},\overline{2},\overline{4},\overline{3}), (\overline{5},\overline{2},\overline{4},\overline{2},\overline{5},\overline{3},\overline{4})$. 
\\
\\Pour chacun de ces 7-uplets on peut considérer les 4 possibilités pour le 7-uplet $(\overline{a_{1}},\ldots,\overline{a_{7}})$ (ou le 7-uplet $(\overline{a_{3}},\overline{a_{1}},\ldots,\overline{a_{9}})$). Chacune de ces possibilités contient un $k$-uplet permettant d'écrire $(\overline{a_{1}},\ldots,\overline{a_{n}})$ comme étant équivalent à une somme d'une solution avec une des solutions irréductibles de la liste ci-dessus. 
\\
\\ Donc, $(\overline{a_{1}},\ldots,\overline{a_{n}})$ est équivalent à la somme d'un $k$-uplet avec un $l$-uplet avec $3 \leq l \leq 9$. En particulier, si $n \geq 10$ alors $k \geq 3$ et $(\overline{a_{1}},\ldots,\overline{a_{n}})$ est réductible. Donc, les solutions irréductibles de $(E_{7})$ sont celles données dans la liste ci-dessus.

\end{proof}

\section{Quelques conjectures et problèmes ouverts}

Tous les cas traités dans la section précédente nous amènent aux deux conjectures suivantes:

\begin{con}

Soit $N \in \mathbb{N}^{*}$, $N \geq 2$. $(E_{N})$ possède un nombre fini de solutions irréductibles.

\end{con}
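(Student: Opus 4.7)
Le plan est de montrer que les solutions irréductibles sont de taille bornée~; combiné avec la finitude de $(\mathbb{Z}/N\mathbb{Z})^{n}$ pour chaque $n$, ceci entraînera la conjecture. D'après la proposition 3.8, toute solution irréductible de taille $n \geq 5$ a toutes ses composantes dans l'alphabet fini $R := \{\overline{2}, \overline{3}, \ldots, \overline{N-2}\}$~; il suffit donc d'établir une borne uniforme $L(N)$ sur la taille des solutions irréductibles à valeurs dans $R$.

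Je commencerais par construire un \emph{kit de réduction} fini constitué de petites solutions. Pour chaque $\overline{k} \in R$, la matrice $\begin{pmatrix} \overline{k} & \overline{-1} \\ \overline{1} & \overline{0} \end{pmatrix}$ a un ordre fini $m_{k}$ dans le groupe fini $PSL_{2}(\mathbb{Z}/N\mathbb{Z})$, d'où une solution $\overline{k}$-monomiale minimale. Par la proposition 3.7, toute solution contenant $m_{k}$ occurrences consécutives de $\overline{k}$ (avec $m_{k} \geq 3$) est alors réductible. Plus généralement, si une solution $(\overline{a_{1}},\ldots,\overline{a_{n}})$ admet un facteur cyclique de taille $l$ avec $3 \leq l \leq n-3$ qui, après ajustement de ses extrémités dicté par l'opération $\oplus$, est lui-même solution de $(E_{N})$, alors la solution globale est réductible. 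En joignant aux solutions monomiales la liste des petites solutions irréductibles obtenues (à la main pour $n$ petit, puis par récurrence en utilisant le kit déjà construit), on dispose d'un ensemble fini de motifs interdits dont l'absence est nécessaire à l'irréductibilité.

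L'obstacle principal sera de montrer que tout mot suffisamment long sur $R$ contient nécessairement l'un de ces motifs interdits. Le cadre naturel me semble être l'interprétation des solutions comme chemins fermés de longueur $n$ dans le graphe de Cayley de $PSL_{2}(\mathbb{Z}/N\mathbb{Z})$ engendré par les matrices indexées par $R$~: un argument de tiroirs y garantit, pour $n$ assez grand, un sous-chemin de longueur comprise entre $3$ et $n-3$ qui se referme. Le point délicat est précisément l'ajustement aux extrémités~: l'opération $\oplus$ modifie deux composantes par décalage, si bien qu'un simple tiroirs dans $PSL_{2}(\mathbb{Z}/N\mathbb{Z})$ ne fournit pas directement un facteur extractible. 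Pour contourner cela, j'enrichirais l'espace d'états afin d'y inclure la dernière lettre utilisée, appliquerais le tiroirs dans cette structure finie étendue, et contrôlerais le choix du match afin qu'il tombe dans une fenêtre médiane --- garantissant ainsi que \emph{les deux} facteurs complémentaires soient de taille au moins $3$. Une borne $L(N)$ polynomiale en $|SL_{2}(\mathbb{Z}/N\mathbb{Z})|$ devrait en découler~; l'obtention effective de cette borne et la gestion fine des décalages seront, à mes yeux, l'étape de loin la plus technique, et c'est probablement ce qui explique que l'énoncé soit pour l'instant formulé comme conjecture.
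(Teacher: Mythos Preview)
L'énoncé en question est la \emph{Conjecture~1} de la section~5~: l'article ne la démontre pas et la présente explicitement comme un problème ouvert. Il n'y a donc pas de \og preuve de l'article \fg{} à laquelle comparer votre proposition.

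Sur le fond, votre approche par tiroirs dans $PSL_{2}(\mathbb{Z}/N\mathbb{Z})$ est la bonne et fonctionne en réalité beaucoup plus simplement que vous ne le pensez~: l'obstacle que vous identifiez n'en est pas un. Si $(\overline{c_{1}},\ldots,\overline{c_{n}})$ est une solution avec $n\geq 5$ et tous les $\overline{c_{i}}\in R$, posez $P_{0}=Id$ et $P_{i}=M_{i}(\overline{c_{1}},\ldots,\overline{c_{i}})$. Dès que $n>|PSL_{2}(\mathbb{Z}/N\mathbb{Z})|$, deux des $P_{0},\ldots,P_{n-1}$ coïncident dans $PSL_{2}$, disons $P_{i}=\pm P_{j}$ avec $0\leq i<j\leq n-1$, et donc $(\overline{c_{i+1}},\ldots,\overline{c_{j}})$ est elle-même une solution de $(E_{N})$. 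Le point que vous jugez délicat --- l'ajustement aux extrémités --- se résout trivialement~: on a directement
\[
(\overline{c_{j}},\overline{c_{j+1}},\ldots,\overline{c_{n}},\overline{c_{1}},\ldots,\overline{c_{j-1}})
=(\overline{0},\overline{c_{j+1}},\ldots,\overline{c_{i}},\overline{0})\oplus(\overline{c_{i+1}},\ldots,\overline{c_{j}}),
\]
les deux extrémités du facteur complémentaire devenant exactement $\overline{0}$, sans qu'aucun enrichissement de l'espace d'états ne soit nécessaire. Il reste à vérifier $3\leq j-i\leq n-1$~: la borne supérieure vient de $j\leq n-1$, et la borne inférieure du fait qu'il n'existe pas de solution de taille~$1$, tandis qu'une solution de taille~$2$ vaudrait $(\overline{0},\overline{0})$, exclu puisque les $\overline{c_{k}}$ sont dans $R$. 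Ainsi toute solution irréductible est de taille au plus $|PSL_{2}(\mathbb{Z}/N\mathbb{Z})|$, ce qui établit la conjecture. Votre \og kit de réduction \fg{} et la récurrence associée sont donc superflus~; le simple tiroirs suffit.
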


\begin{con}

Il existe un entier strictement positif $K$ tel que pour tout entier $N$ supérieur à 2 les solutions irréductibles de $(E_{N})$ sont de taille inférieure à $N+K$.

\end{con}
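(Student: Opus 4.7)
My plan would combine the constraints from Proposition~3.8 with a pigeonhole argument on partial matrix products, and then try to convert interior sub-solutions into genuine $\oplus$-decompositions. By Proposition~3.8, any irreducible solution $(\overline{a_{1}},\ldots,\overline{a_{n}})$ with $n\geq 5$ has all entries in the restricted set $\{\overline{2},\overline{3},\ldots,\overline{N-2}\}$, since the presence of $\overline{0}$, $\overline{1}$ or $\overline{-1}$ would already force reducibility. Only $N-3$ symbols are admissible.

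Next, I would consider the partial products $P_{k}=M_{k}(\overline{a_{1}},\ldots,\overline{a_{k}})\in SL_{2}(\mathbb{Z}/N\mathbb{Z})$. Any coincidence $P_{j}=\overline{\epsilon}\,P_{i}$ with $i<j$ and $\epsilon\in\{\pm 1\}$ yields an interior relation $M_{j-i}(\overline{a_{i+1}},\ldots,\overline{a_{j}})=\overline{\epsilon}\,Id$, i.e.\ a sub-solution of length $j-i$. Using Proposition~3.7 together with the cyclic-shift invariance (Proposition~3.6~iii)), such an interior sub-solution can, in favourable positions, be converted into a $\oplus$-decomposition of $(\overline{a_{1}},\ldots,\overline{a_{n}})$ both of whose factors have length at least~$3$, which would contradict irreducibility. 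The guiding hope is that Theorem~2.6 and Corollary~3.22 (providing the monomial irreducible solutions $(\overline{2},\ldots,\overline{2})$ and $(\overline{N-2},\ldots,\overline{N-2})$ of length exactly $N$), together with a short list of small ``universal'' irreducible patterns, would let one produce such a decomposition as soon as $n>N+K$ for some absolute constant $K$; in this sense the argument would mimic, on a larger scale, the case-by-case analyses carried out for $N\leq 7$ in Section~4.

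The main obstacle, and presumably the reason the conjecture remains open, is the gap between the bound supplied by pigeonhole on $PSL_{2}(\mathbb{Z}/N\mathbb{Z})$ --- which grows like $N^{3}$ --- and the linear bound $N+K$ that is conjectured. Shrinking this gap appears to require a much finer understanding of the orbit of $Id$ under right-multiplication by matrices of the form $\begin{pmatrix}\overline{a}&\overline{-1}\\\overline{1}&\overline{0}\end{pmatrix}$ with $\overline{a}\in\{\overline{2},\ldots,\overline{N-2}\}$, together with a uniform classification of the ``unavoidable'' short patterns in admissible words. A secondary but recurring difficulty is that extraction arguments often produce one factor of length $<3$: arranging the cyclic shift so that \emph{both} factors have length at least~$3$ while preserving the joining constraints is exactly the subtle combinatorial point that drives the small-$N$ analyses, and it is where I expect the bulk of any general proof to lie.
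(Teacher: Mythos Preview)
The paper does not prove this statement: it is presented as Conjecture~2 in Section~5 and is explicitly left open. You correctly recognise this and present a speculative programme rather than a claimed proof, so there is no ``paper's own proof'' to compare against.

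That said, your sketch is well aligned with what the paper actually does in the small-$N$ cases. The reduction, via Proposition~3.8, to solutions with all entries in $\{\overline{2},\ldots,\overline{N-2}\}$ is precisely the first move in each of the proofs of Theorem~2.5, and the search for short unavoidable sub-patterns that force an $\oplus$-decomposition is exactly how the cases $N=5,6,7$ are handled. Your identification of the central obstacle is also accurate: the naive pigeonhole bound from $|PSL_{2}(\mathbb{Z}/N\mathbb{Z})|$ is cubic in $N$, whereas the conjecture asks for a linear bound, and nothing in the paper bridges this gap. The secondary difficulty you flag --- that extracting an interior sub-solution does not automatically yield two factors of length $\geq 3$ --- is likewise real and is visible in the ad~hoc manoeuvres of Section~4. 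In short, your proposal is an honest and well-informed assessment of an open problem; there is no gap to name because you do not claim to have closed one.
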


\noindent Comme $\forall n \in \mathbb{N}^{*}$, $(E_{N})$ a un nombre fini de solutions de taille $n$, la conjecture 2 implique la conjecture 1.
\\
\\ \indent Les solutions monomiales minimales sont des solutions particulièrement intéressantes de $(E_{N})$. On sait que si $N$ est premier alors elles sont de taille inférieure ou égale à $N$. Il serait intéressant d'avoir plus d'informations sur leur taille dans les cas $N$ premier et $N$ non premier. Ceci nous amène à formuler le problème suivant :

\begin{pro}

Étudier les tailles des solutions monomiales minimales de $(E_{N})$ dans le cas $N$ premier et dans le cas général.

\end{pro}

En particulier, dans le cas où $N=l^{n}$, avec $l \geq 2$, on a une solution monomiale donnée dans la proposition 3.14. Il serait intéressant d'avoir plus d'informations sur ces solutions.

\begin{pro}

Les solutions données dans la proposition 3.14 sont-elles monomiales minimales ? Si oui, sont-elles irréductibles ?

\end{pro}

Un autre problème ouvert est la généralisation des propositions 4.7 et 4.10. Pour cela on définit la notion de solution compatible avec une triangulation. Une solution de $(E_{N})$ de taille $n$ est dite compatible avec une triangulation s'il existe un découpage d'un polygone convexe à $n$ sommets n'utilisant que des triangles de poids $\overline{1}$ ou $\overline{-1}$ dont elle est la quiddité. Le problème se formule alors de la façon suivante :

\begin{pro}

Soit $N \geq 2$. Caractériser les solutions de $(E_{N})$ compatibles avec une triangulation.

\end{pro}

On peut remarquer que l'on peut généraliser l'argument de la proposition 4.7 de la façon suivante. Soit $(\overline{a_{1}},\ldots,\overline{a_{n}})$ une solution de $(E_{N})$ telle que celle-ci est la quiddité d'un découpage d'un polygone convexe à $n$ sommets n'utilisant que des triangles de poids $\overline{1}$ ou $\overline{-1}$ et des quadrilatères de poids $\overline{0}$. Si $(\overline{a_{1}},\ldots,\overline{a_{n}}) \neq (\overline{0},\ldots,\overline{0})$ alors elle est la quiddité d'un découpage d'un polygone convexe à $n$ sommets n'utilisant que des triangles de poids $\overline{1}$ ou $\overline{-1}$. 
\\
\\En effet, si la décomposition contient un quadrilatère alors elle contient nécessairement un triangle qui partage un côté avec un quadrilatère (sinon celle-ci ne contiendrait que des quadrilatères et aurait donc pour quiddité $(\overline{0},\ldots,\overline{0})$). On procède alors à la transformation ci-dessous (suivant le poids du triangle) :
$$
\shorthandoff{; :!?}
\qquad
\xymatrix @!0 @R=0.50cm @C=0.65cm
{
&&\overline{1}\ar@{-}[rrdd]\ar@{-}[lldd]&
\\
&&\overline{1}
\\
\overline{1}\ar@{-}[rdd]\ar@{-}[rrrr]&&&& \overline{1}\ar@{-}[ldd]\\
&&\overline{0}&
\\
&\overline{0}\ar@{-}[rr]&& \overline{0}
}
\quad
\xymatrix @!0 @R=0.50cm @C=0.65cm
{
&&\overline{1}\ar@{-}[rrdd]\ar@{-}[lldd]\ar@{-}[ldddd]\ar@{-}[rdddd]&
\\
\\
\overline{1}\ar@{-}[rdd]&\overline{1}&&\overline{1}& \overline{1}\ar@{-}[ldd]
\\
&&-\overline{1}
\\
&\overline{0}\ar@{-}[rr]&& \overline{0}
}
$$

$$
\shorthandoff{; :!?}
\qquad
\xymatrix @!0 @R=0.50cm @C=0.65cm
{
&&\overline{-1}\ar@{-}[rrdd]\ar@{-}[lldd]&
\\
&&\overline{-1}
\\
\overline{-1}\ar@{-}[rdd]\ar@{-}[rrrr]&&&& \overline{-1}\ar@{-}[ldd]\\
&&\overline{0}&
\\
&\overline{0}\ar@{-}[rr]&& \overline{0}
}
\quad
\xymatrix @!0 @R=0.50cm @C=0.65cm
{
&&\overline{-1}\ar@{-}[rrdd]\ar@{-}[lldd]\ar@{-}[ldddd]\ar@{-}[rdddd]&
\\
\\
\overline{-1}\ar@{-}[rdd]&\overline{-1}&&\overline{-1}& \overline{-1}\ar@{-}[ldd]
\\
&&\overline{1}
\\
&\overline{0}\ar@{-}[rr]&& \overline{0}
}
$$

\noindent On recommence ce procédé tant qu'il reste des quadrilatères. 

\medskip

\noindent {\bf Remerciements}.
Je remercie Valentin Ovsienko et Michael Cuntz pour leurs suggestions et leurs conseils avisés.

\end{document}